\newcolumntype{C}[1]{>{\centering\arraybackslash}p{#1}}
\newtheorem{theorem}{Theorem}[section]
\newtheorem{cor}[theorem]{Corollary}
\newtheorem{lemma}[theorem]{Lemma}
\newtheorem{definition}{Definition}
\newtheorem{remark}{Remark}
\newcommand{\Ga}{\alpha}
\newcommand{\Gb}{\beta}
\newcommand{\Gs}{\sigma}
\newcommand{\GO}{\Omega}
\newcommand{\FF}{\mathbb{F}}
\newcommand{\RR}{\mathbb{R}}
\newcommand{\CC}{\mathbb{C}}
\newcommand{\NN}{\mathbb{N}}
\newcommand{\Om}{\Omega}
\newcommand{\ds}{\displaystyle}
\newcommand{\pf}{\noindent {\sl Proof}. \ }
\newcommand{\p}{\partial}
\newcommand{\pd}[2]{\frac {\p #1}{\p #2}}
\newcommand{\eqnref}[1]{(\ref {#1})}
\renewcommand{\qed}{\hfill $\Box$ \medskip}
\newcommand{\beq}{\begin{equation}}
\newcommand{\eeq}{\end{equation}}
\def\ep{\varepsilon}
\newcommand{\KstarOmega}{\mathcal{K}_{\partial\Omega}^{*}}
\newcommand{\KOmega}{\mathcal{K}_{\partial\Omega}}
\newcommand{\Kcal}{\mathcal{K}}
\newcommand{\Scal}{\mathcal{S}}
\newcommand{\Dcal}{\mathcal{D}}
\newcommand{\la}{\langle}
\newcommand{\ra}{\rangle}
\numberwithin{equation}{section}
\numberwithin{figure}{section}
\begin{document}

\newcommand{\TheTitle}{Analytical shape recovery of a conductivity inclusion based on Faber polynomials}
\newcommand{\TheAuthors}{D. Choi, J. Kim and M. Lim}
\title{{\TheTitle}
\thanks{The third author is the corresponding author. This work was supported by the National Research Foundation of Korea (NRF) grant funded by the Korean government (MSIT) No. 2019R1F1A1062782.}}

\author{
Doosung Choi\thanks{\footnotesize Department of Mathematical Sciences, Korea Advanced Institute of Science and Technology, Daejeon 34141, Korea ({7john@kaist.ac.kr}, {kjb2474@kaist.ac.kr},  {mklim@kaist.ac.kr})}\and Junbeom Kim\footnotemark[2] \and Mikyoung Lim\footnotemark[2]}
\date{\today}
\maketitle

\begin{abstract}
A conductivity inclusion, inserted in a homogeneous background, induces a perturbation in the background potential. This perturbation admits a multipole expansion whose coefficients are the so-called generalized polarization tensors (GPTs). GPTs can be obtained from multistatic measurements. As a modification of GPTs, the Faber polynomial polarization tensors (FPTs) were recently introduced in two dimensions. In this study, we design two novel analytical non-iterative methods for recovering the shape of a simply connected inclusion from GPTs by employing the concept of FPTs. First, we derive an explicit expression for the coefficients of the exterior conformal mapping associated with an inclusion in a simple form in terms of GPTs, which allows us to accurately reconstruct the shape of an inclusion with extreme or near-extreme conductivity. Secondly, we provide an explicit asymptotic formula in terms of GPTs for the shape of an inclusion with arbitrary conductivity by considering the inclusion as a perturbation of its equivalent ellipse. With this formula, one can non-iteratively approximate an inclusion of general shape with arbitrary conductivity, including a straight or asymmetric shape. Numerical experiments demonstrate the validity of the proposed analytical approaches.
\end{abstract}

\noindent {\footnotesize {\bf AMS subject classifications.} {30C35; 35J05; 45P05} } 

\noindent {\footnotesize {\bf Key words.} {Conductivity transmission problem; Shape recovery; Exterior conformal mapping; Faber polynomial polarization tensor}}

%\tableofcontents

\section{Introduction} 
We consider the imaging problem of an elastic or electrical inclusion in two dimensions. Let $\Om$ be a simply connected domain containing the origin with $\mathcal{C}^2$-boundary.
We assume that the background $\RR^2\setminus\overline{\Om}$ and the inclusion $\Omega$ have constant isotropic conductivities $1$ and $\sigma$, respectively, with $0<\sigma \neq 1 < \infty$. 
We consider the conductivity transmission problem:
\beq\label{cond_eqn0}
\begin{cases}
\ds \Delta u =0\quad&\mbox{in } \RR^2\setminus\partial{\Om},\\[1.5mm]
\ds u\big|^+ =u\big|^-\quad&\mbox{on }\p\Om,\\[1.5mm]
\ds\pd{u}{\nu}\Big|^+ =\sigma\pd{u}{\nu}\Big|^- \quad&\mbox{on }\p\Om,\\[1.5mm]
\ds u(x)  =H(x)+O({|x|^{-1}})\quad&\mbox{as } |x| \to \infty,
\end{cases}
\eeq
where $H$ is a given entire harmonic function, $\nu$ is the outward unit normal vector to $\p\Om$, and the symbols $+$ and $-$ indicate the limit from the exterior and interior of $\Om$, respectively. 
The inclusion $\Om$ induces a change in the background potential, and this perturbation, $u-H$, admits a multipole expansion whose coefficients are the so-called generalized polarization tensors (GPTs).
The purpose of this paper is to design two novel analytical approaches to reconstruct the shape of an inclusion from GPTs.

GPTs are complex-valued tensors which generalize the classical polarization tensors (PTs) \cite{Ammari:2013:MSM, Polya:1951:IIM}. One can acquire the values of GPTs from multistatic measurements \cite{Ammari:2007:PMT},  where a high signal-to-noise ratio is required to get high-order terms \cite{Ammari:2014:TIU}. The concept of GPTs has been a fundamental building block in imaging problems; see, for example, \cite{Ammari:2013:MSM, Ammari:2014:GPT, Ammari:2007:PMT, Choi:2018:CEP}. 
The uniqueness of the inverse problem of determining inclusions from GPTs is known \cite{Ammari:2003:PGP}. 
The concept of GPTs has also been used in a variety of interesting contexts, such as invisible cloaking \cite{Ammari:2013:ENCa, Choi:2018:GME} and plasmonic resonance \cite{Ammari:2016:SPR, Feng:2018:CGV}. Furthermore, a recent series of studies reported the super-resolution of a nanoscale object that overcomes diffraction limits using GPTs \cite{Ammari:2018:MNF, Ammari:2018:RFD}.
The spectrum of the NP operator has recently drawn significant attention in relation to plasmonic resonances
\cite{Ammari:2013:STN, Kang:2017:SRN,  Yu:2017:SDA}.

A powerful approach for recovering the shape of a simply connected conductivity inclusion in two dimensions has been to use a complex analytic formulation for the conductivity transmission problem; for example, see \cite{Ammari:2018:MNF, Ammari:2007:PMT, Choi:2018:CEP, Kang:2015:CCM}.
The Riemann mapping theorem ensures that there exists uniquely a conformal mapping from a region outside a disk to the region outside the inclusion. 
Analytic expressions for the coefficients of this exterior conformal mapping were obtained in terms of GPTs and applied to accurate shape recovery \cite{Choi:2018:CEP, Kang:2015:CCM}, given that the inclusion is insulating.
For such a case, the multipole expansion of $u-H$ admits an extension up to $\p \Om$ on which the flux of $u$ is prescribed to be zero. Using this extension, one can directly express the coefficients of the conformal mapping in terms of those of the multipole expansion ({\it i.e.}, GPTs), and the conformal mapping determines the shape of the inclusion.
One can also expect similar results for the perfectly conducting case by considering a harmonic conjugate of $u$. 
However, for an inclusion with arbitrary conductivity, the boundary value of $u$ is then not explicit anymore and it is a challenging question to generalize the analytical formulas in \cite{Choi:2018:CEP, Kang:2015:CCM} to the arbitrary conductivity case. 
In this paper, we provide two asymptotic answers to this question and, as direct applications, design non-iterative methods to recover the shape of an inclusion.

First, we modify the expression of the conformal mapping obtained in \cite{Choi:2018:CEP, Kang:2015:CCM} to be applicable to both insulating and perfectly conducting cases. We then validate that this improved formula approximately holds also for the near-extreme conductivity case. It allows us to accurately reconstruct the shape of an inclusion with extreme or near-extreme conductivity, as shown in numerical examples in section \ref{sec:numerical}.

Secondly, we derive an explicit asymptotic formula for the shape of an inclusion which allows us to approximate an inclusion of arbitrary conductivity with general shape, including a straight or asymmetric shape.
This result is strongly related to the approach of asymptotic analysis on the boundary integral formulation for the conductivity transmission problem, which holds independently of the value of the conductivity, to capture the shape of a conductivity anomaly \cite{Ammari:2014:GPT, Ammari:2010:CIP,  Ammari:2012:GPT, Ammari:2018:IAD, Khelifi:2014:BVP}. 
For a target given by a small perturbation of a disk, the shape perturbation was asymptotically expressed in terms of GPTs \cite{Ammari:2010:CIP}.
For an inclusion of general shape, one can alternatively find an equivalent ellipse that admits the same values of PTs. Iterative optimization methods have been further developed to capture shape details by using higher-order GPTs as well as first-order terms \cite{Ammari:2014:GPT, Ammari:2012:GPT, Ammari:2018:IAD}.
Our result in this paper significantly improves the result obtained in \cite{Ammari:2010:CIP} by regarding an inclusion as a perturbation of its equivalent ellipse instead of a perturbation of a disk; a straight or asymmetric shape can now be well recovered, as shown in numerical examples in section 6. In the derivation, we use the asymptotic integral representation of the shape derivative of GPTs in \cite{Ammari:2010:CIP}.
While this integral formula is too complicated in Cartesian coordinates to be expressed as an explicit analytic form for a perturbation of an ellipse, we overcome this difficulty by using the curvilinear orthogonal coordinates associated with the exterior conformal mapping and the related series solution method introduced in \cite{Jung:2018:NSS}.
As a result, we design an analytic shape recovery method (see Theorem \ref{et} in section \ref{arbtcond}) that is non-iterative, differently from those in the previous literature \cite{Ammari:2014:GPT, Ammari:2012:GPT, Ammari:2018:IAD}.

As the main tool, we employ the concept of Faber polynomial polarization tensors (FPTs), recently introduced in \cite{Choi:2018:GME}, which are linear combinations of GPTs with the coefficients determined by Faber polynomials (see section \ref{sec:matrix} for the definition and properties of FPTs). The Faber polynomials were first introduced by G. Faber \cite{Faber:1903:UPE} and have been successfully applied in various areas, including numerical approximation \cite{Curtiss:1966:SDP, Ellacott:1983:CFS}, interpolation theory \cite{Chui:1992:FSA, Curtiss:1964:HIF} and material science \cite{Gao:2004:FSM, Luo:2009:FSM}.
For any simply connected region in the complex plane, the Faber polynomials are defined in association with the exterior conformal mapping and they form a basis for analytic functions \cite{Duren:1983:UF}. 
A series solution method for the two-dimensional conductivity transmission problem was developed using the Faber polynomials \cite{Jung:2018:NSS}, and this result was successfully applied to estimate the decay rate of eigenvalues of the Neumann-Poincar\'{e} operator \cite{Jung:2019:DEE}. 
The authors of the present paper recently introduced FPTs in \cite{Choi:2018:GME}. We analyze the relations among GPTs, FPTs, and the exterior conformal mapping to derive the main results in this paper.

The remainder of this paper is organized as follows. Section 2 describes the layer potential technique, GPTs, and the shape recovery formula for a perturbed disk. Section 3 is devoted to the Faber polynomials, FPTs, and the matrix formulation for the transmission problem.
We provide analytic shape recovery methods in section \ref{sec:ERF} and section \ref{arbtcond}.
 Numerical results are presented in section 6. The paper ends with a conclusion in section 7.

%%%%%%%%%%%%%%%%%%%%%
\section{Generalized Polarization Tensors (GPTs)}
We first review the boundary integral formulation for the conductivity transmission problem and describe the definition and some essential properties of GPTs. We also review an asymptotic formula for the shape derivative of GPTs and its application to recovering the shape of a perturbed disk.

\subsection{Boundary integral formulation for the transmission problem}
For a density function $\varphi\in L^2(\p\Om)$, we define the single- and double-layer potentials  associated with $\p\Om$ as
\begin{align*}
\ds&\Scal_{\p\Om}[\varphi](x)= \frac{1}{2\pi}\int_{\p \Om} \ln|x-y| \varphi(y)\, d\sigma(y),\quad x\in\RR^2,\\[1.5mm]
\ds&\Dcal_{\p\Om}[\varphi](x)= \frac{1}{2\pi}\int_{\p \Om} \frac{\p}{\p \nu_y}\ln|x-y| \varphi(y)\, d\sigma(y),\quad x\in\RR^2\setminus \p \Om.
\end{align*}
They satisfy the jump relations \cite{Verchota:1984:LPR}:
\begin{align*}
\ds\frac{\partial}{\partial\nu}\Scal_{\p\Om}[\varphi]\Big|^{\pm}&=\left(\pm\frac{1}{2}I+\Kcal_{\p\Om}^*\right)[\varphi]\quad\text{on }\partial\Omega,\\[1.5mm]
\ds	\Dcal_{\p\Om}[\varphi]\Big|^{\pm}&=\left(\mp\frac{1}{2}I+\Kcal_{\p\Om} \right)[\varphi]\quad\text{on }\partial\Omega
\end{align*}
with
$$
\ds \Kcal_{\p\Om}[\varphi](x)= \frac{1}{2\pi} \ p.v.\int_{\partial\Omega} \frac{\left\la y-x,\nu_y\right\ra}{|x-y|^2}\varphi(y)\, d\sigma(y),\quad x\in\p \Om,
$$
and its $L^2$-adjoint
$$
\ds \Kcal_{\p\Om}^*[\varphi](x)= \frac{1}{2\pi} \ p.v.\int_{\partial\Omega} \frac{\left\la x-y,\nu_x\right\ra}{|x-y|^2}\varphi(y)\, d\sigma(y),\quad x\in\p \Om.
$$
The symbol $p.v.$ stands for the Cauchy principal value. We call $\Kcal_{\p\Om}$ and $\Kcal_{\p\Om}^*$ the Neumann--Poincar\'{e} (NP) operators associated with $\Om$. 

One can express the solution to \eqnref{cond_eqn0} as
\beq\label{u:int:formulation}
u(x)=H(x)+\Scal_{\p\Om}[\varphi](x),\quad x\in\RR^2,
\eeq
with
\beq\label{eqn:integral}
\varphi=(\lambda I-\Kcal_{\p\Om}^*)^{-1}\left[\nu\cdot \nabla H\right], \quad \lambda = \frac{\sigma+1}{2(\sigma-1)}.
\eeq
The operator $\lambda I-\Kcal_{\p\Om}^*$ is invertible on $L^2_0(\p\Om)$ for $|\lambda|\geq1/2$ \cite{Escauriaza:1992:RTW, Escauriaza:1993:RPS,Kellogg:2012:FPT,Verchota:1984:LPR}. We refer to \cite{Ammari:2007:PMT} for more properties of the NP operator and to \cite{Helsing:2013:SIE, Helsing:2017:CSN} for numerical computation of \eqnref{eqn:integral}.

\subsection{Definition and properties of GPTs}
We identify $x=(x_1,x_2) \in\RR^2$ with $z=x_1+ix_2\in\CC$. Following \cite{Ammari:2013:MSM}, we define the (complex contracted) GPTs:
\begin{definition}[GPTs]
For each natural number $k$, we set $P_k(z)=z^k$. For $m,n\in\NN$, we define
\begin{align}
\ds\NN_{mn}^{(1)}(\Om, \lambda)&=\int_{\p\Om} P_n(z) (\lambda I-\Kcal^*_{\p\Om})^{-1}\left[\pd{ P_m }{\nu} \right](z) \,d\sigma(z),\label{def:GPT1}\\
\ds\NN_{mn}^{(2)}(\Om, \lambda)&=\int_{\p\Om} P_n(z) (\lambda I-\Kcal^*_{\p\Om})^{-1}\left[\pd{\overline{P_m}}{\nu}\right](z) \,d\sigma(z). \label{def:GPT2}
\end{align}
If not specified otherwise, we set $\lambda=\frac{\sigma+1}{2(\sigma-1)}$. 
\end{definition}

The background potential $H$ is a real-valued entire harmonic function so that it satisfies the expansion
\beq\label{expan:H}
H(z) = \sum_{m=1}^\infty \left(\alpha_m z^m+\overline{\alpha_m z^m}\right)
\eeq
for some complex coefficients $\alpha_m$. One can then derive from \eqnref{u:int:formulation} and the definition of GPTs that the solution to \eqnref{cond_eqn0} admits the multipole expansion (see \cite{Ammari:2013:MSM})
\begin{align}\label{CP2}
u(z) - H(z)= &-\sum_{n=1}^\infty \sum_{m=1}^\infty\frac{1}{4\pi n}\left( \alpha_m \NN_{mn}^{(1)}+\overline{\alpha_m}\, \NN_{mn}^{(2)}\right){z^{-n}}\notag\\
&-\sum_{n=1}^\infty \sum_{m=1}^\infty\frac{1}{4\pi n}\left(\overline{\alpha_m}\,\overline{ \NN_{mn}^{(1)}}+\alpha_m \overline{\NN_{mn}^{(2)}}\right)\overline{z^{-n}},\quad|z|\gg1.
\end{align}
Hence, GPTs quantitatively express the perturbation of a background potential function due to the presence of an inclusion. As the following theorem asserts, the full information of GPTs uniquely determine the geometry and conductivity of an inclusion.
\begin{theorem}[\cite{Ammari:2003:PGP}]\label{uniquenessGPT}
Let $\Om_1$ and $\Om_2$ be inclusions with conductivities $\sigma_1$ and $\sigma_2$, respectively. Set $\lambda_1=\frac{\sigma_1+1}{2(\sigma_1-1)}$ and $\lambda_2=\frac{\sigma_2+1}{2(\sigma_2-1)}$. If
$
\NN_{mn}^{(j)} (\Om_1, \lambda_1) = \NN_{mn}^{(j)} (\Om_2, \lambda_2)$ for all $m,n\in\NN$, $j=1,2$, then $\Om_1=\Om_2$ and $\sigma_1=\sigma_2$.
\end{theorem}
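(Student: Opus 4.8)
The plan is to use the multipole expansion \eqnref{CP2}, which exhibits the GPTs as exactly the data encoding the exterior perturbation $u-H$ produced by any entire harmonic background $H$, and to proceed in three steps: pass from equal GPTs to equal exterior fields, recover the geometry, then recover the conductivity. \textbf{Step 1 (equal GPTs give equal exterior fields).} First I would fix a disk $B_R$ containing $\overline{\Om_1}\cup\overline{\Om_2}$ and note that, by \eqnref{CP2}, for every entire harmonic $H=\sum_m(\alpha_m z^m+\overline{\alpha_m z^m})$ the perturbations $u_1-H$ and $u_2-H$ have identical multipole expansions on $\RR^2\setminus B_R$. Hence $u_1-u_2$ is harmonic and decaying near infinity with vanishing multipole coefficients, so $u_1\equiv u_2$ on the unbounded connected component $D$ of $\RR^2\setminus(\overline{\Om_1}\cup\overline{\Om_2})$. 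Expanding the point source $G(\cdot,y)=\tfrac1{2\pi}\ln|\cdot-y|$ into harmonic polynomials convergent on $B_R$ for $|y|$ large, the same conclusion holds with $H=G(\cdot,y)$: the solutions $u_1^{y},u_2^{y}$ of \eqnref{cond_eqn0} agree on $D$. Since $y\mapsto u_j^{y}$ is real-analytic on $D$ and $D$ is connected, the set of $y\in D$ with $u_1^{y}\equiv u_2^{y}$ on $D$ is open, closed and nonempty, hence all of $D$. Equivalently, the two transmission problems share the same exterior Green's function, i.e. the same Dirichlet-to-Neumann map on any circle enclosing both inclusions.

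\textbf{Step 2 (recovering the geometry).} Next I would show $\Om_1=\Om_2$ by contradiction via Isakov's method of singular solutions. If the inclusions differ, then after possibly exchanging indices there is a point $p$ lying on $\p\Om_1$ but in the exterior of $\overline{\Om_2}$. Choosing the source $y\in D$ and letting $y\to p$, the field $u_2^{y}$ stays regular near $p$ up to the explicit background singularity $G(\cdot,y)$, because near $p$ the medium of the second problem is homogeneous; in contrast $u_1^{y}$ acquires, through the transmission condition $\p_\nu u_1^{y}|^+=\sigma_1\,\p_\nu u_1^{y}|^-$ with $\sigma_1\neq1$, a reflected contribution off the interface $\p\Om_1$ at $p$ whose energy $\int_{\Om_1\cap B_\rho(p)}|\nabla u_1^{y}|^2$ blows up as $y\to p$. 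Since $u_1^{y}=u_2^{y}$ on $D$, comparing the two as $y\to p$ from within $D$ forces this singular interface response to be absent, which is impossible for $\sigma_1\neq1$. (This is precisely the known uniqueness of the two-phase inverse conductivity problem for domains with $\mathcal{C}^2$ boundary.) Therefore $\p\Om_1=\p\Om_2$ and $\Om_1=\Om_2=:\Om$.

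\textbf{Step 3 (recovering the conductivity).} Finally, with the domain $\Om$ now common, the GPTs depend on the conductivity only through $\lambda$ via the resolvent $(\lambda I-\Kcal_{\p\Om}^{*})^{-1}$. The functions $\{\p_\nu P_m,\ \p_\nu\overline{P_m}\}_{m\ge1}$ span a dense subspace of $L^2_0(\p\Om)$, and pairing against $\{P_n,\overline{P_n}\}_{n\ge1}$ (dense in $L^2$) determines an $L^2_0$-element uniquely; thus the full collection $\{\NN_{mn}^{(j)}(\Om,\lambda_k)\}$ determines the operator $(\lambda_k I-\Kcal_{\p\Om}^{*})^{-1}$ on $L^2_0(\p\Om)$. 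The hypothesis then gives $(\lambda_1 I-\Kcal_{\p\Om}^{*})^{-1}=(\lambda_2 I-\Kcal_{\p\Om}^{*})^{-1}$, and the resolvent identity $(\lambda_1 I-\Kcal_{\p\Om}^{*})^{-1}-(\lambda_2 I-\Kcal_{\p\Om}^{*})^{-1}=(\lambda_2-\lambda_1)(\lambda_1 I-\Kcal_{\p\Om}^{*})^{-1}(\lambda_2 I-\Kcal_{\p\Om}^{*})^{-1}$ forces $\lambda_1=\lambda_2$. Since $\lambda=\frac{\sigma+1}{2(\sigma-1)}$ is injective for $0<\sigma\neq1<\infty$, we conclude $\sigma_1=\sigma_2$.

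The hard part will be Step 2: upgrading the coincidence of the exterior fields to the identity of the two boundaries. This is the genuine inverse-problem content of the theorem, and the delicate point is to arrange the singular-solution argument so that the conductivity contrast $\sigma_1\neq1$ produces a provably non-removable singular response at a separating boundary point; Steps 1 and 3 are, by comparison, soft consequences of \eqnref{CP2} and the resolvent identity.
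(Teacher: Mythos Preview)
The paper does not supply its own proof of this theorem; it is quoted from \cite{Ammari:2003:PGP} and used as background. Your three-step outline is essentially the argument given in that reference: equality of all GPTs forces, via the multipole expansion \eqnref{CP2}, equality of the full Dirichlet-to-Neumann (equivalently, far-field) data on a circle enclosing both inclusions; one then invokes Isakov's singular-solutions uniqueness for the two-phase inverse conductivity problem to conclude $\Om_1=\Om_2$, and finishes by identifying the conductivities.

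Two small points are worth tightening. In Step~2, the geometric claim ``after exchanging indices there is $p\in\p\Om_1$ in the exterior of $\overline{\Om_2}$'' is correct for simply connected $\Om_j$, but you should make explicit that such $p$ lies on the boundary of the unbounded component $D$ so that the approach $y\to p$ within $D$ is legitimate; also, the blow-up argument is the genuinely delicate estimate and in \cite{Ammari:2003:PGP} (following Isakov) it is carried out by comparing local energies rather than by the heuristic ``reflected contribution'' you sketch. In Step~3, your density claims are right, but note that a simpler route is available once $\Om_1=\Om_2=\Om$: the first-order tensor $\NN_{11}^{(2)}(\Om,\lambda)$ alone is a nonconstant rational function of $\lambda$ (cf.\ the explicit formulas in Corollary~\ref{cor:FPT} for the elliptic case, or more generally Theorem~\ref{thm:FPT}), so equality of this single entry already pins down $\lambda$ without the full resolvent identity. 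With these caveats your plan is sound and matches the original proof.
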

One of the essential properties of GPTs is symmetricity (for the derivation see \cite[Proposition 11.2]{Ammari:2013:MSM}):
\begin{lemma} \label{NNproperty}
For all $m,n\in\NN$, it holds that 
\beq\label{eqn:GPT:symmetric}
\NN_{mn}^{(1)} = \NN_{nm}^{(1)},\quad\overline{\NN_{mn}^{(2)}} = \NN_{nm}^{(2)}.
\eeq
 In other words, $\NN^{(1)}$ is symmetric and $\NN^{(2)}$ is Hermitian.
\end{lemma}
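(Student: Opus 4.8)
The plan is to reduce both identities to a single symmetry statement for the bilinear pairing $\la f,g\ra=\int_{\p\Om}fg\,d\sigma$ (no conjugation), and then to prove that statement by Green's second identity confined to the bounded region $\Om$, so that the growth of $P_n$ at infinity never enters the argument. For a function $h$ harmonic near $\overline{\Om}$ I would write $\varphi_h=(\lambda I-\Kcal^*_{\p\Om})^{-1}\big[\tfrac{\p h}{\p\nu}\big]$ and let $u_h=h+\Scal_{\p\Om}[\varphi_h]$ be the associated transmission solution, exactly as in \eqnref{u:int:formulation}--\eqnref{eqn:integral}. The first step is to record the clean boundary relation $\varphi_h=(\sigma-1)\,\tfrac{\p u_h}{\p\nu}\big|^-$: this follows by writing $\varphi_h=\tfrac{\p}{\p\nu}\Scal_{\p\Om}[\varphi_h]\big|^+-\tfrac{\p}{\p\nu}\Scal_{\p\Om}[\varphi_h]\big|^-$ (the jump of the normal derivative of a single layer equals the density) and then inserting the transmission condition $\tfrac{\p u_h}{\p\nu}\big|^+=\sigma\,\tfrac{\p u_h}{\p\nu}\big|^-$.

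With this in hand, the core claim is the master identity $\la g,\varphi_f\ra=\la f,\varphi_g\ra$ for any two harmonic $f,g$. I would substitute $g=u_g-\Scal_{\p\Om}[\varphi_g]$ on $\p\Om$ and split
\begin{align*}
\la g,\varphi_f\ra&=(\sigma-1)\int_{\p\Om}g\,\frac{\p u_f}{\p\nu}\Big|^-\,d\sigma\\
&=(\sigma-1)\int_{\p\Om}u_g\,\frac{\p u_f}{\p\nu}\Big|^-\,d\sigma-\la\Scal_{\p\Om}[\varphi_g],\varphi_f\ra .
\end{align*}
The first term is symmetric in $f,g$ by Green's second identity in $\Om$: both $u_f,u_g$ are harmonic there, so the bulk term vanishes and $\int_{\p\Om}u_g\,\tfrac{\p u_f}{\p\nu}|^-=\int_{\p\Om}u_f\,\tfrac{\p u_g}{\p\nu}|^-$. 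The second term is symmetric because the single-layer operator $\Scal_{\p\Om}$ has the symmetric kernel $\tfrac1{2\pi}\ln|x-y|$, whence $\la\Scal_{\p\Om}[\varphi_g],\varphi_f\ra=\la\varphi_g,\Scal_{\p\Om}[\varphi_f]\ra$. Taking $f=P_m,\ g=P_n$ then gives $\NN^{(1)}_{mn}=\la P_n,\varphi_{P_m}\ra=\la P_m,\varphi_{P_n}\ra=\NN^{(1)}_{nm}$.

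For $\NN^{(2)}$ the point is that $\overline{P_m}=\bar z^{\,m}$ is still harmonic, so the master identity applies with $f=\overline{P_m}$, giving $\NN^{(2)}_{mn}=\la P_n,\varphi_{\overline{P_m}}\ra=\la\overline{P_m},\varphi_{P_n}\ra$. I would then conjugate and use that $\lambda\in\RR$ and that $\Scal_{\p\Om},\Kcal^*_{\p\Om}$ have real kernels, so these operators commute with complex conjugation and $\overline{\varphi_{P_n}}=\varphi_{\overline{P_n}}$; since $\overline{\la a,b\ra}=\la\bar a,\bar b\ra$ for the bilinear pairing, this yields $\overline{\NN^{(2)}_{mn}}=\la P_m,\varphi_{\overline{P_n}}\ra=\NN^{(2)}_{nm}$, i.e. the Hermitian property.

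The step I expect to be most delicate is justifying that $u_h=h+\Scal_{\p\Om}[\varphi_h]$ genuinely solves the transmission problem, that is $\tfrac{\p u_h}{\p\nu}\big|^+=\sigma\,\tfrac{\p u_h}{\p\nu}\big|^-$; this is precisely the computation underlying \eqnref{eqn:integral} and hinges on the algebraic identity $\tfrac{\sigma+1}{2}=(\sigma-1)\lambda$ together with the jump formula for $\tfrac{\p}{\p\nu}\Scal_{\p\Om}$. The advantage of routing everything through the interior trace $\tfrac{\p u_h}{\p\nu}\big|^-$ and Green's identity in $\Om$ is that it never requires inverting $\Scal_{\p\Om}$ nor integrating against the unbounded $P_n$ at infinity, which sidesteps the logarithmic-capacity subtleties of the planar single-layer potential.
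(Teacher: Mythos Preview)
Your argument is correct. The key observation $\varphi_h=(\sigma-1)\,\tfrac{\p u_h}{\p\nu}\big|^-$ is valid, and from it the ``master identity'' $\la g,\varphi_f\ra=\la f,\varphi_g\ra$ follows exactly as you describe: one term is symmetric by Green's second identity applied inside $\Om$, the other by the symmetry of the logarithmic kernel of $\Scal_{\p\Om}$. The deduction of the Hermitian property of $\NN^{(2)}$ from the reality of $\lambda$ and of the kernels of $\Scal_{\p\Om},\Kcal^*_{\p\Om}$ is also sound.

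As for comparison with the paper: the paper does not actually supply a proof of this lemma; it simply points to \cite[Proposition 11.2]{Ammari:2013:MSM}. Your self-contained argument is the standard one underlying that reference --- rewriting the GPT pairing in terms of the transmission solutions $u_f,u_g$ and invoking Green's identity in the bounded region --- so there is no substantive methodological difference, only that you have written out what the paper outsources. Your care in confining all integrations to $\Om$ so as to avoid the far-field behaviour of $P_n$ and the planar single-layer invertibility issue is a nice touch that makes the argument robust.
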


\subsection{Shape derivative of GPTs}\label{sec:shapederi}

For a multi-index $\alpha=(\alpha_1,\alpha_2)\in \NN^2$, we set $x^\alpha=x_1^{\alpha_1}x_2^{\alpha_2}$ and $|\alpha|=|\alpha_1|+|\alpha_2|$.
We consider an alternative real-valued form of GPTs for multi-indices $\alpha$ and $\beta$:
$$
M_{\alpha\beta}(\Om,\lambda) = \int_{\p \GO} y^\Gb\left(\lambda
I - \Kcal^*_{\p\GO}\right)^{-1}\left[\pd{ x^\Ga}{\nu}\right](y) \, d\Gs (y).
$$
One can easily find that $\NN_{mn}^{(1)}$ and $\NN_{mn}^{(2)}$ are linear combinations of $M_{\alpha\beta}$ with $|\alpha|=m$ and $|\beta|=n$. 
More precisely, 
\begin{align*}
\mathbb{N}_{mn}^{(1)}(\Om,\lambda)&=\sum_{\alpha, \beta} a^{(1)}_\alpha b^{(1)}_\beta M_{\alpha \beta} (\Om, \lambda),\\
\mathbb{N}_{mn}^{(2)}(\Om,\lambda)&=\sum_{\alpha, \beta} a^{(2)}_\alpha b^{(2)}_\beta M_{\alpha \beta} (\Om, \lambda)
\end{align*}
with the multi-indexed coefficients $a^{(1)}_\alpha, b^{(1)}_\beta, a^{(2)}_\alpha, b^{(2)}_\beta $ satisfying $\sum_\alpha a^{(1)}_\alpha x^\alpha=z^m$, $\sum_{\beta}b^{(1)}_\beta x^\beta=z^n$, $\sum_\alpha a^{(2)}_\alpha x^\alpha=\overline{z^m}$ and $\sum_{\beta}b^{(2)}_\beta x^\beta=z^n$, respectively.

We have the following lemma assuming that $\Om$ is a simply connected domain with $\mathcal{C}^2$-boundary given by a small perturbation of $\Om_0$, {\it i.e.}, 
\beq\label{Om:deform}
\p \Om = \left\{ z + \ep f(z) \nu_0(z) : \, z\in \p \Om_0 \right\}
\eeq
with a real-valued function $f\in \mathcal{C}^1(\p \Om_0)$, 
where $\nu_0$ is outward unit normal to $\p \Om_0$.
\begin{lemma}[\cite{Ammari:2012:GPT}] \label{lemma:shapederivative} 
Let $a_\alpha$ and $b_\beta$ be two multi-indexed sequences such that $H=\sum_\alpha a_\alpha x^\alpha$ and $F=\sum_\beta b_\beta x^\beta$ are harmonic polynomials. For $\Om$ satisfying \eqnref{Om:deform} and $\lambda=\frac{\sigma+1}{2(\sigma-1)}$, it holds that
\begin{align}\notag
&\sum_{\alpha, \beta} a_\alpha b_\beta M_{\alpha \beta} (\Om, \lambda) - \sum_{\alpha, \beta} a_\alpha b_\beta M_{\alpha \beta} (\Om_0, \lambda) \\ \label{lemma:shapederivative:eqn}
=&\; \ep (\sigma-1) \int_{\p \Om_0} f(x) \left(\pd{{u}}{\nu}\Big|^-\pd{v}{\nu}\Big|^- +\frac{1}{\sigma}\pd{{u}}{T}\Big|^- \pd{v}{T}\Big|^- \right)(x) \,d \sigma(x) + O(\ep^2),
\end{align}
where $T$ is the positively oriented unit tangent vector on $\p \Om$ and $u$, $v$ are the solutions to
\beq\label{cond_eqn1}
\begin{cases}
\ds\Delta u=0\quad&\mbox{in } \RR^2 \setminus \p \Om_0,\\
\ds u\big|^+=u\big|^-\quad&\mbox{on }\p \Om_0, \\
\ds \pd{u}{\nu}\Big|^+=\sigma\pd{u}{\nu}\Big|^-\quad&\mbox{on }\p \Om_0, \\
\ds (u - H)(x) =O({|x|^{-1}})\quad&\mbox{as } |x| \to \infty
\end{cases}
\eeq
and
\beq\label{cond_eqn2}
\begin{cases}
\ds\Delta v=0\quad&\mbox{in } \RR^2 \setminus \p \Om_0,\\
\ds \sigma v\big|^+=v\big|^-\quad&\mbox{on }\p \Om_0, \\
\ds \pd{v}{\nu}\Big|^+=\pd{v}{\nu}\Big|^-\quad&\mbox{on }\p \Om_0, \\
\ds (v - F)(x) =O({|x|^{-1}})\quad&\mbox{as } |x| \to \infty.
\end{cases}
\eeq
\end{lemma}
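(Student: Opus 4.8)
The plan is to first rewrite the bilinear combination of GPTs as a Dirichlet-energy integral over $\Om$, and then to shape-differentiate this integral with the dual state \eqnref{cond_eqn2} playing the role of the adjoint variable. By linearity of $M_{\alpha\beta}$ in its multi-indices and the definition of $M_{\alpha\beta}$, the quantity $\sum_{\alpha,\beta}a_\alpha b_\beta M_{\alpha\beta}(\Om,\lambda)$ equals $\int_{\p\Om}F(\lambda I-\Kcal_{\p\Om}^*)^{-1}[\p H/\p\nu]\,d\sigma$, where $H=\sum_\alpha a_\alpha x^\alpha$ and $F=\sum_\beta b_\beta x^\beta$. Writing $\varphi=(\lambda I-\Kcal_{\p\Om}^*)^{-1}[\p H/\p\nu]$ as in \eqnref{eqn:integral}, the representation \eqnref{u:int:formulation} and the jump relation for $\p\Scal_{\p\Om}[\varphi]/\p\nu$ give $\varphi=\p u/\p\nu|^+-\p u/\p\nu|^-=(\sigma-1)\,\p u/\p\nu|^-$. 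Since $u$ and $F$ are both harmonic in $\Om$, Green's identity then yields the energy representation
\[
\sum_{\alpha,\beta}a_\alpha b_\beta M_{\alpha\beta}(\Om,\lambda)=(\sigma-1)\int_{\Om}\nabla u\cdot\nabla F\,dx,
\]
in which $F$ is fixed and only $u$ and the domain depend on the geometry.

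Next I would apply this representation to the perturbed domain $\Om=\Om_\ep$ defined by \eqnref{Om:deform} and differentiate $I(\ep):=(\sigma-1)\int_{\Om_\ep}\nabla u_\ep\cdot\nabla F\,dx$ at $\ep=0$, where $u_\ep$ solves \eqnref{cond_eqn1} on $\Om_\ep$. The flow $z\mapsto z+\ep f(z)\nu_0(z)$ has normal velocity $f$ on $\p\Om_0$, so the Hadamard transport theorem gives
\[
\frac{1}{\sigma-1}\frac{dI}{d\ep}\Big|_{\ep=0}=\int_{\Om_0}\nabla\dot u\cdot\nabla F\,dx+\int_{\p\Om_0}\big(\nabla u\cdot\nabla F\big)\big|^-f\,d\sigma,
\]
where $\dot u$ is the shape derivative of $u$. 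The second term is already of the desired boundary form; the difficulty is the bulk term containing the unknown $\dot u$.

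To eliminate $\dot u$ I would invoke the dual state $v$ solving \eqnref{cond_eqn2}, which plays the role of the adjoint state for the linear functional $w\mapsto(\sigma-1)\int_{\Om_0}\nabla w\cdot\nabla F\,dx$; the swap of the potential- and flux-transmission conditions in \eqnref{cond_eqn2} relative to \eqnref{cond_eqn1} encodes this adjointness and is consistent with the symmetry of GPTs (Lemma \ref{NNproperty}). Differentiating the interface conditions of \eqnref{cond_eqn1} along the flow produces transmission conditions for $\dot u$ across $\p\Om_0$ whose inhomogeneous data are proportional to $f$ and to the surface derivatives of $u$. Pairing these with $v$ through Green's identity over $\Om_0$ and over $\RR^2\setminus\overline{\Om_0}$ converts $\int_{\Om_0}\nabla\dot u\cdot\nabla F\,dx$ into a boundary integral over $\p\Om_0$ expressed solely through the traces of $u$, $v$ and $f$, with no residual dependence on $\dot u$.

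Finally I would combine the two boundary integrals, decompose $\nabla u$ and $\nabla v$ into their normal and tangential parts on $\p\Om_0$, and simplify using the four interface relations: continuity of $u$ (so $\p u/\p T$ is continuous) together with $\p u/\p\nu|^+=\sigma\,\p u/\p\nu|^-$ from \eqnref{cond_eqn1}, and continuity of $\p v/\p\nu$ together with $\p v/\p T|^+=\tfrac{1}{\sigma}\p v/\p T|^-$ from \eqnref{cond_eqn2}. These relations collapse the normal–normal and tangential–tangential products into $\p u/\p\nu|^-\,\p v/\p\nu|^-$ and $\tfrac{1}{\sigma}\p u/\p T|^-\,\p v/\p T|^-$ respectively, producing exactly the integrand of \eqnref{lemma:shapederivative:eqn}; the factor $\tfrac1\sigma$ on the tangential term is precisely the signature of the dual jump conditions for $v$. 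The remainder is $O(\ep^2)$ because the flow map, the solutions, and the GPTs depend smoothly on $\ep$ under the assumed $\mathcal{C}^2$-regularity of $\p\Om_0$ and $f\in\mathcal{C}^1(\p\Om_0)$. I expect the main obstacle to be the rigorous elimination of $\dot u$: establishing the existence and boundary regularity of the shape derivative, deriving the correct jump data for $\dot u$ from the moving interface conditions, and carrying out the trace bookkeeping so that the interior and exterior contributions combine into the clean normal/tangential integrand with the stated weights.
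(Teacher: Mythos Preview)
The paper does not prove this lemma; it is quoted verbatim from \cite{Ammari:2012:GPT} and used as an input, so there is no ``paper's own proof'' to compare against.

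Your outline is a legitimate route to the result and is essentially the PDE/adjoint-method derivation of the shape derivative of the GPT functional. The energy representation $(\sigma-1)\int_\Om\nabla u\cdot\nabla F$ is correct, the Hadamard transport term is set up correctly, and you have correctly identified that the dual problem \eqnref{cond_eqn2} is what is needed to eliminate $\dot u$. One small point worth making explicit: the reason the ``swapped'' transmission conditions in \eqnref{cond_eqn2} arise is that $v$ is simply $\sigma$ times (inside $\Om_0$) the solution of the \emph{same} transmission problem with background $F$; the problem is self-adjoint, and writing the adjoint in this rescaled form is what produces the clean $\tfrac{1}{\sigma}$ weight on the tangential term after the trace algebra. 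The approach in \cite{Ammari:2012:GPT} proceeds instead by asymptotically expanding the layer potentials and the density $\varphi_\ep=(\lambda I-\Kcal_{\p\Om_\ep}^*)^{-1}[\p H/\p\nu]$ in powers of $\ep$ and reading off the first-order term directly; that route avoids having to justify the existence and regularity of $\dot u$ but requires careful operator expansions. Both methods yield \eqnref{lemma:shapederivative:eqn}. Your self-diagnosed obstacle---the rigorous derivation of the jump data for $\dot u$ and the bookkeeping in the Green's-identity pairing---is exactly where the work lies in your approach, but there is no missing idea.
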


\subsection{Recovering the shape of a perturbed disk}\label{sec:pertdisk}
We can asymptotically solve \eqnref{lemma:shapederivative:eqn} for $f$ given that $\Om_0$ is a disk; it gives the same formula obtained in \cite{Ammari:2010:CIP}.
Let $\Om_0$ be a disk, namely $D$, centered at the origin with radius $\gamma_D$ for some $\gamma_D>0$. Again, we identify $\RR^2$ with $\CC$. Then, the solutions $u$ and $v$ corresponding to the harmonic functions $H(z)=z^m$ and $F(z)=z^n$ are 
\beq\label{udisk}
u(z)=
\begin{dcases}
\begin{aligned}
\ds & \frac{2}{\sigma+1}\,z^m,\quad && z\in D,\\
\ds & z^m - \frac{\sigma-1}{\sigma+1}\, \gamma_D^{2m}\, \overline{z^{-m}},\quad && z\in \CC\setminus \overline{D}
\end{aligned}
\end{dcases}
\eeq
and
\beq\label{vdisk}
v(z)=
\begin{dcases}
\begin{aligned}
\ds &\frac{2\sigma}{\sigma+1}\, z^n,\quad && z\in D,\\
\ds &z^n - \frac{\sigma-1}{\sigma+1}\, \gamma_D^{2n}\, \overline{z^{-n}},\quad && z\in \CC\setminus \overline{D}.
\end{aligned}
\end{dcases}
\eeq
From Lemma \ref{lemma:shapederivative}, we arrive the following theorem. 
\begin{theorem}[\cite{Ammari:2010:CIP}]\label{dd}
Suppose $D$ is a disk centered at the origin with radius $\gamma_D>0$. Let $\Om$ be a small perturbation of $D$ in the form of \eqnref{Om:deform}. For each $m,n \in\NN$, we have
\begin{align}
&\ep\widehat{f}_{m-n} = \frac{\lambda^2(\sigma-1)}{2\pi mn\gamma_D^{m+n-2}(\sigma+\gamma_D^2)} \left( \NN_{mn}^{(2)} (\Om,\lambda)  - \NN_{mn}^{(2)} (D,\lambda) \right) + O(\epsilon^2), \label{firstdisk} \\
&\ep\widehat{f}_{m+n} = \frac{\lambda^2(\sigma-1)}{2\pi  mn\gamma_D^{m+n-2}(\sigma-\gamma_D^2)} \left( \overline{\NN_{mn}^{(1)}} (\Om,\lambda)  - \overline{\NN_{mn}^{(1)}} (D,\lambda) \right) + O(\epsilon^2), \label{seconddisk}
\end{align}
where $\widehat{f}_k$ denotes the Fourier coefficient of $f$, {\it i.e.}, 
$\widehat{f}_k = \frac{1}{2\pi}\int_{0}^{2\pi} f(\gamma_D e^{i\theta})e^{-ik\theta} d\theta$ 
for each $k\geq0$.
\end{theorem}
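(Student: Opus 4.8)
The plan is to specialise the shape--derivative identity \eqnref{lemma:shapederivative:eqn} to $\Om_0=D$ and to read off the Fourier modes of $f$ one at a time, exploiting that on a circle every boundary trace involved is completely explicit. I would feed in two families of background data. Choosing $H(z)=z^m$ and $F(z)=z^n$ turns the left-hand side of \eqnref{lemma:shapederivative:eqn} into $\NN^{(1)}_{mn}(\Om,\lambda)-\NN^{(1)}_{mn}(D,\lambda)$, since the coefficient expansions satisfy $\sum_\alpha a^{(1)}_\alpha x^\alpha=z^m$ and $\sum_\beta b^{(1)}_\beta x^\beta=z^n$; choosing instead $H(z)=\overline{z^m}$ and $F(z)=z^n$ produces $\NN^{(2)}_{mn}(\Om,\lambda)-\NN^{(2)}_{mn}(D,\lambda)$. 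For these data the states $u,v$ on the disk are exactly \eqnref{udisk}--\eqnref{vdisk} (for the antiholomorphic datum $\overline{z^m}$ one simply conjugates $u$, which is legitimate because $\sigma$ is real and the transmission problem \eqnref{cond_eqn1} is conjugation invariant). Thus both GPT differences are routed through the same energy integral over $\p D$.

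The second step is to evaluate the interior traces on $\p D=\{\gamma_D e^{i\theta}\}$. Here I would use that for a holomorphic $g$ the outward normal and positively oriented tangential derivatives on a circle are $\partial_\nu g=e^{i\theta}g'$ and $\partial_T g=ie^{i\theta}g'$ (with the conjugate relations for an antiholomorphic trace), which is immediate from the Cauchy--Riemann equations. Applied to $u\big|^-=\tfrac{2}{\sigma+1}z^m$ and $v\big|^-=\tfrac{2\sigma}{\sigma+1}z^n$, the normal traces become $c\,e^{im\theta}$ and $d\,e^{in\theta}$ with $c=\tfrac{2}{\sigma+1}$, $d=\tfrac{2\sigma}{\sigma+1}$ (up to $m,n$ and $\gamma_D$ powers), and the tangential traces carry an extra factor $i$. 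The key simplification is that in the integrand $\partial_\nu u\,\partial_\nu v+\tfrac1\sigma\partial_T u\,\partial_T v$ the tangential cross term picks up $i^2=-1$, so the two contributions collapse to a single pure Fourier mode: $e^{i(m+n)\theta}$ in the $\NN^{(1)}$ case, with scalar built from the $1-\tfrac1\sigma$ combination (hence $(\sigma-1)$), and $e^{i(n-m)\theta}$ in the $\NN^{(2)}$ case, with scalar built from $1+\tfrac1\sigma$ (hence $(\sigma+1)$).

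The third step is to recognise the boundary integral as a Fourier coefficient. Since $d\sigma=\gamma_D\,d\theta$ on $\p D$, one has $\int_{\p D} f\,e^{i(m+n)\theta}\,d\sigma=2\pi\gamma_D\,\widehat f_{-(m+n)}$ and $\int_{\p D} f\,e^{i(n-m)\theta}\,d\sigma=2\pi\gamma_D\,\widehat f_{m-n}$. Using that $f$ is real, so $\widehat f_{-k}=\overline{\widehat f_k}$, together with the symmetry and Hermitian relations of Lemma \ref{NNproperty}, I would conjugate the $\NN^{(1)}$ identity to pass from $\widehat f_{-(m+n)}$ to $\widehat f_{m+n}$ (this is exactly why $\overline{\NN^{(1)}_{mn}}$ appears in \eqnref{seconddisk}), while the $\NN^{(2)}$ identity already delivers $\widehat f_{m-n}$ directly. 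Dividing by the accumulated scalar and repackaging the $\sigma$-dependent constants through $\lambda=\tfrac{\sigma+1}{2(\sigma-1)}$ then produces the two displayed formulas, each inheriting its $O(\ep^2)$ remainder from the lemma.

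The genuinely delicate point is justifying the complex bilinear extension of Lemma \ref{lemma:shapederivative}: that one may substitute the complex data $z^m,\overline{z^m},z^n$ and extract the two GPT families from a single real identity, since both sides are $\CC$-bilinear in the coefficient vectors. After that, the work is careful constant bookkeeping -- tracking $c,d$, the $mn$ and $\gamma_D$ powers, and the $\lambda$-rewriting, and keeping the two sign patterns straight -- together with the degenerate modes $m=n$, where $\widehat f_0$ appears and $\NN^{(2)}_{mm}(D,\lambda)$ is itself nonzero (whereas $\NN^{(1)}_{mn}(D,\lambda)=0$ for all $m+n\ge 2$). Because the final constants are the easiest place to slip, I would cross-check them against a directly computable example, namely a small elliptical perturbation of $D$, for which the polarization tensor, and hence $\NN^{(1)}_{11}$, is available in closed form and gives an independent value of $\ep\widehat f_{2}$.
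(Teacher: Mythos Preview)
Your proposal is correct and follows essentially the same approach as the paper: the paper simply states that the theorem follows from Lemma \ref{lemma:shapederivative} after inserting the explicit disk solutions \eqnref{udisk}--\eqnref{vdisk}, and you have spelled out exactly this computation, including the bilinear extension to complex data, the normal/tangential traces on the circle, and the identification of the resulting boundary integrals as Fourier coefficients of $f$.
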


For a disk $D$ centered at $a_0\in\CC$ with radius $\gamma_D$, it holds that 
$\NN_{11}^{(2)}(D,\lambda)  =  2\pi \gamma_D^2/\lambda$ and $\NN_{21}^{(2)}(D,\lambda) = 2\overline{a_0}\NN_{11}^{(2)}(D,\lambda)$.
In the numerical simulation in section 6, we first find a disk $D$ satisfying 
$$\NN_{11}^{(2)}(\Om,\lambda) = \NN_{11}^{(2)}(D,\lambda)$$
and $$\NN_{21}^{(2)}(\Om,\lambda) =\NN_{21}^{(2)}(D,\lambda).$$
In other words, we set $D$ to be a disk centered at $a_0$ with radius $\gamma_D$ with
\begin{align}
\gamma_D^2 = \frac{\lambda \, \NN_{11}^{(2)}(\Om,\lambda)}{2\pi}, \quad a_0 = \frac{\NN_{12}^{(2)}(\Om,\lambda)}{2\NN_{11}^{(2)}(\Om,\lambda)}.\label{radius_diskperturb}
\end{align}
Here, we use symmetricity \eqnref{NNproperty} to obtain the second relation.  We then apply the formula \eqnref{firstdisk} to obtain the Fourier coefficient of $f$; \eqnref{seconddisk} works given that $|\sigma-\gamma_D^2|$ is not small. 
\begin{remark}\label{remark:PDR}
In all examples in section 6, the center $a_0$ is zero. 
 In general, we can apply Theorem \ref{dd} after shifting the coordinate frame by $a_0$ (see \cite{Ammari:2014:GPT} for the dependence of GPTs on the coordinate frame translation). 
\end{remark}

\section{Faber polynomial Polarization Tensors (FPTs)}\label{sec:matrix}

As stated earlier, $\Om$ is assumed to be a simply connected planar domain.
According to the Riemann mapping theorem, there uniquely exist a positive number $\gamma$ and a conformal mapping $\Psi$ from $\left\{w\in\CC:|w|>\gamma\right\}$ onto $\CC\setminus\overline{\Om}$ satisfying $\Psi(\infty)=\infty$ and $\Psi'(\infty)=1$. This mapping admits a Laurent series expansion
\beq\label{conformal:Psi}
\Psi(w)=w+a_0+\frac{a_1}{w}+\frac{a_2}{w^2}+\cdots
\eeq 
with some complex coefficients $a_0,a_1,a_2,\dots$; see \cite[Chapter 1.2]{Pommerenke:1992:BBC} for the derivation.
Note that $\p\Om$ can be specified by the image of $\{z\in\CC:|w|=\gamma\}$ under $\Psi$.

\subsection{Faber polynomials and Grunsky coefficients}
The exterior conformal mapping $\Psi$ defines a sequence of the so-called Faber polynomials $\{F_m\}_{m=1}^\infty$ via the relation (see \cite{Duren:1983:UF, Faber:1903:UPE})
\begin{align}\notag
\frac{w\Psi'(w)}{\Psi(w)-z}=\sum_{m=0}^\infty \frac{F_m(z)}{w^{m}},\quad z\in{\overline{\Om}},\ |w|>\gamma.
\end{align}
For each $m$, $F_m$ is a monic polynomial of degree $m$ uniquely determined by $a_0,a_1,\dots,a_{m-1}$ via the recursive relation
\beq\label{Faberrecursion}
F_{m+1} (z) = z F_m (z) - m a_m - \sum_{k=0} ^{m} a_k F_{m-k} (z), \quad m\ge 0.
\eeq
For example, the first three Faber polynomials are
\begin{align}
F_0(z)&=1,\quad F_1(z)=z-a_0,\quad F_2(z)=z^2-2a_0 z+a_0^2-2a_1.\label{Faber:ex1}
%\\
%F_3(z)& = z^3 - 3a_0z^2 + 3(a_0^2-a_1)z - a_0^3+3a_0a_1-3a_2.\label{Faber:ex2}
\end{align}

%We set $\gamma,\Psi(w),F_m(z)$ as in section \ref{sec:Faber}. 
An essential property of the Faber polynomial is that $F_m(\Psi(w))$ has a single positive-order term $w^m$. In other words, it satisfies
\begin{equation*}
F_m(\Psi(w)) = w^m+\sum_{k=1}^{\infty}c_{mk}{w^{-k}},\quad |w|>\gamma.
\end{equation*}
The quantities $c_{mk}$ are called the Grunsky coefficients
and can be uniquely determined by $\Psi$ via the recursive relation
\begin{align*}
\ds &c_{1m} = a_m,\quad c_{m1} = ma_m,\\
\ds &c_{m,k+1} = c_{m+1,k} - a_{m+k} + \sum_{s=1}^{m-1} a_{m-s}c_{sk} - \sum_{s=1}^{k-1} a_{k-s}c_{ms},\quad m,k\geq1.
\end{align*} 
For example, we have
\begin{align*}
&c_{11}=a_1,\quad c_{12}=a_2,\quad c_{13}=a_3,\\
& c_{21}=2a_1,\quad c_{22} = 2a_3+a_1^2, \quad c_{23} = 2a_4+2a_1a_2,\\
& c_{31} = 3a_1,\quad c_{32} = 3a_4+3a_1a_2,\quad c_{33} = 3a_5+3a_1a_3+3a_2^2+a_1^3.
\end{align*}
The Grunsky coefficients satisfy
\beq\label{eqn:Grunsky:sym1}
kc_{mk} =mc_{km}\quad\mbox{for all }m,k\in\NN.
\eeq
It also holds that
\begin{equation}\label{ineq:Grunsky}
\sum_{k=1}^\infty \left| \sum_{m=1}^\infty \sqrt{\frac{k}{m}} \frac{c_{mk}}{\gamma^{m+k}} x_m \right|^2 \leq \sum_{m=1}^\infty |x_m|^2
\end{equation}
for all complex sequences $(x_m)$, where the equality holds if and only if $\Omega$ has a measure of zero.
See \cite{Duren:1983:UF} for the derivation and further details. 
We can symmetrize the Grunsky coefficients as 
 $$g_{mk} = \sqrt{\frac{k}{m}} \frac{c_{mk}}{\gamma^{m+k}}.$$
From \eqnref{eqn:Grunsky:sym1}, it holds that
\beq\label{eqn:g:sym}
g_{mk}=g_{km}\quad\mbox{for all }m,k \in\NN.
\eeq

\subsection{Series expansion of the layer potential operators} \label{subsec:series}
Set $\rho_0=\ln \gamma$.
We can define a curvilinear orthogonal coordinate system $(\rho,\theta)\in [\rho_0, \infty)\times[0,2\pi)$ on the exterior of $\Om$ via the relation
\beq\notag
z=\Psi(e^{\rho+i\theta})\quad\mbox{for }z\in\CC\setminus\Om.
\eeq 
We denote the scale factor as 
$
h(\rho,\theta)=\left|\frac{\partial \Psi}{\partial\rho}\right|=\left|\frac{\partial \Psi}{\partial\theta}\right|.$
The length element on $\p\Om$ is $d\sigma(z)=h(\rho_0,\theta)d\theta$. 
For a function $g(z)=(g\circ\Psi)(e^{\rho+i\theta})$, it holds that
\beq
\frac{\partial g}{\partial \nu}\Big|_{\p \Om}^+(z)=\frac{1}{h(\rho_0,\theta)}\frac{\partial }{\partial \rho}g(\Psi(e^{\rho+i\theta}))\Big|_{\rho\rightarrow\rho_0^+}.\label{eqn:normalderiv}
\eeq	
We also define density basis functions on $\p\Om$ as
\begin{align*}
&\eta_m(z) = |m|^{-\frac{1}{2}}e^{im\theta},\\
&\zeta_m(z)= |m|^{\frac{1}{2}}\frac{e^{im\theta}}{h(\rho_0,\theta)} \quad\mbox{for }m\in\NN.
\end{align*}
For $m=0$, we set $\eta_0(z)=1$ and $\zeta_0(z)=\frac{1}{h(\rho_0,\theta)}$.
\begin{lemma}[\cite{Jung:2018:NSS}]\label{lemma:seriesexpan}
Let $\Om$ be a simply connected planar domain with $\mathcal{C}^2$-boundary. 
For each $m\in\NN$, the layer potential operators associated with $\Om$ satisfy 
\begin{align*}
\Scal_{\p\Om}[\zeta_m](z)&=
\begin{dcases}
-\frac{1}{2\sqrt{m}\gamma^m} F_m(z), \quad &z\in \overline{\Omega},\\
-\frac{1}{2\sqrt{m}\gamma^m} \left( \sum_{k=1}^\infty c_{mk} e^{-k(\rho +i\theta)} + \gamma^{2m}e^{-\rho+i\theta} \right), \quad &z\in \CC\setminus\overline{\Omega}.
\end{dcases}\\[2mm]
\Dcal_{\p\Om}[\eta_m](z)&=
\begin{dcases}
\frac{1}{2\sqrt{m}\gamma^m} F_m(z), \quad &z\in \overline{\Omega},\\
\frac{1}{2\sqrt{m}\gamma^m} \left( \sum_{k=1}^\infty c_{mk} e^{-k(\rho +i\theta)} - \gamma^{2m}e^{-\rho+i\theta} \right), \quad &z\in \CC\setminus\overline{\Omega}.
\end{dcases}
\end{align*}
Furthermore, $\mathcal{K}_{\p\Om}$ and $\mathcal{K}^*_{\p\Om}$ admit the series expansion:
\begin{align*}
&&\Kcal^*_{\p\Om}\left[\zeta_0\right]&=\frac{1}{2}\zeta_0, &\KstarOmega[\zeta_m]&=\frac{1}{2}\sum_{k=1}^{\infty}\sqrt{\frac{k}{m}}\frac{c_{mk}}{\gamma^{m+k}}\, \overline{{\zeta}_{k}},&\\
&&\Kcal_{\p\Om}[1]&=\frac{1}{2}, &\KOmega [\eta_m]&=\frac{1}{2}\sum_{k=1}^{\infty} \sqrt{\frac{k}{m}}\frac{c_{mk}}{\gamma^{m+k}}\, \overline{{\eta}_{k}}.&
\end{align*}
\end{lemma}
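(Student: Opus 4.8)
The plan is to turn each identity into a single Fourier integral over the boundary angle and then read off one mode. Parametrizing $\p\Om$ by $y=\Psi(\gamma e^{i\phi})$ gives $d\sigma(y)=h(\rho_0,\phi)\,d\phi$, and since $\zeta_m=\sqrt m\,e^{im\phi}/h(\rho_0,\phi)$ on $\p\Om$ the scale factor cancels the length element, so that
\[
\Scal_{\p\Om}[\zeta_m](z)=\frac{\sqrt m}{2\pi}\int_0^{2\pi}\mathrm{Re}\,\log\bigl(z-\Psi(\gamma e^{i\phi})\bigr)\,e^{im\phi}\,d\phi .
\]
For the double layer I would first use \eqnref{eqn:normalderiv} to replace $\pd{}{\nu_y}$ by $h^{-1}\partial_\rho$; because $\eta_m=e^{im\phi}/\sqrt m$, the factor $h$ again cancels $d\sigma$, leaving a Fourier integral of $\partial_s\,\mathrm{Re}\,\log\bigl(z-\Psi(e^{s+i\phi})\bigr)\big|_{s=\rho_0}$ against $e^{im\phi}$.

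Everything then rests on two generating functions. Integrating the defining relation of the Faber polynomials gives, for $z\in\overline{\Om}$ and $|w|>\gamma$,
\[
\log\frac{\Psi(w)-z}{w}=-\sum_{m=1}^\infty\frac{F_m(z)}{m}\,w^{-m},
\]
and substituting $z=\Psi(\xi)$, using $F_m(\Psi(\xi))=\xi^m+\sum_k c_{mk}\xi^{-k}$ and resumming the resulting geometric series, yields the Grunsky generating function
\[
\log\frac{\Psi(w)-\Psi(\xi)}{w-\xi}=-\sum_{m,k=1}^\infty\frac{c_{mk}}{m}\,w^{-m}\xi^{-k},\qquad \gamma<|\xi|<|w|.
\]
Inserting the first expansion (with $w=\gamma e^{i\phi}$, the boundary value being reached by continuity of $\Scal_{\p\Om}$) into the integral and using $\frac1{2\pi}\int_0^{2\pi}e^{i(j-k)\phi}\,d\phi=\delta_{jk}$ to isolate the one surviving mode produces $-\tfrac1{2\sqrt m\gamma^m}F_m(z)$ for $z\in\overline{\Om}$; for $z=\Psi(w)$ outside, the Grunsky expansion produces the series $\sum_k c_{mk}e^{-k(\rho+i\theta)}$, while the remaining exceptional term (proportional to $\gamma^{2m}$) comes from the elementary factor $\log(w-\xi)$ through its complex conjugate. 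The symmetry $kc_{mk}=mc_{km}$ of \eqnref{eqn:Grunsky:sym1} is what lets the summation index be relabelled into the stated shape. The two double-layer formulas follow in the same way once the $s$-derivative is taken, with the Faber/Grunsky part reversing sign relative to the single layer.

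Finally I would obtain the NP-operator identities from the jump relations as averages of the two one-sided boundary limits,
\[
\KstarOmega[\zeta_m]=\tfrac12\Bigl(\pd{}{\nu}\Scal_{\p\Om}[\zeta_m]\big|^++\pd{}{\nu}\Scal_{\p\Om}[\zeta_m]\big|^-\Bigr),\qquad \KOmega[\eta_m]=\tfrac12\Bigl(\Dcal_{\p\Om}[\eta_m]\big|^++\Dcal_{\p\Om}[\eta_m]\big|^-\Bigr).
\]
The exterior limits are immediate from $h^{-1}\partial_\rho$ of the exterior series already computed. The interior limits are the delicate point, and I expect this to be the main obstacle: the coordinate $(\rho,\theta)$ lives only on $\CC\setminus\Om$, so one cannot simply differentiate the interior expression $-\tfrac1{2\sqrt m\gamma^m}F_m$ in $\rho$. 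The remedy is to exploit that $F_m$ is holomorphic, so its interior normal derivative is $n\,F_m'$ with complex unit normal $n=\Psi'(w)w/h$; differentiating $F_m(\Psi(w))=w^m+\sum_k c_{mk}w^{-k}$ shows $F_m'(\Psi(w))\Psi'(w)=mw^{m-1}-\sum_k kc_{mk}w^{-k-1}$, so that $\Psi'$ cancels and the interior limit also becomes an explicit trigonometric series. Averaging the two limits annihilates the $m\gamma^m e^{im\theta}$ contribution and leaves precisely the Grunsky series; rewriting it through $\overline{\zeta_k}$ (respectively $\overline{\eta_k}$) and the symmetrized coefficients $g_{mk}$ of \eqnref{eqn:g:sym} gives the stated expansions. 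The $m=0$ cases are the classical constant/equilibrium eigenrelations $\KOmega[1]=\tfrac12$ and $\KstarOmega[\zeta_0]=\tfrac12\zeta_0$, which follow directly, the second being dual to the first since $\zeta_0\,d\sigma=d\theta$.
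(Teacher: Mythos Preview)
The paper does not give its own proof of this lemma; it is quoted from \cite{Jung:2018:NSS} without argument. Your outline is a correct and essentially self-contained proof, and it is precisely the route one would expect the cited reference to take: reduce the layer potentials to Fourier integrals in the angular variable (the key observation being that $\zeta_m\,d\sigma=\sqrt m\,e^{im\phi}d\phi$ and $\eta_m\,d\sigma=m^{-1/2}e^{im\phi}h\,d\phi$), feed in the Faber generating function for interior $z$ and the Grunsky generating function for exterior $z$, and read off the single surviving mode.

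Two remarks on execution. First, in your Grunsky step you implicitly need the expansion with the roles of $w$ and $\xi$ reversed relative to your stated domain $\gamma<|\xi|<|w|$: the boundary variable sits at radius $\gamma$ and the field point at larger radius. The symmetry $kc_{mk}=mc_{km}$ is exactly what makes the double series symmetric in the two variables, so the expansion extends to $|w|,|\xi|>\gamma$; you have already flagged this symmetry as the relabelling device, which is the right idea. Second, your handling of the interior normal derivative is correct and is indeed the only nontrivial point: since $F_m$ is a polynomial it is smooth across $\p\Om$, so $\partial_\nu F_m|^-=\partial_\nu F_m|^+$, and the latter is computed via $\partial_\rho F_m(\Psi(e^{\rho+i\theta}))=m w^m-\sum_k kc_{mk}w^{-k}$ at $|w|=\gamma$. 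Averaging the two one-sided limits kills the $m\gamma^m e^{im\theta}$ term and leaves the stated Grunsky series. The $m=0$ identities follow as you say: $\zeta_0\,d\sigma=d\theta$ makes $\Scal_{\p\Om}[\zeta_0]$ constant in $\Om$, hence $(-\tfrac12 I+\Kcal^*_{\p\Om})[\zeta_0]=0$.
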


Using the jump relations of the layer potential operators and Lemma \ref{lemma:seriesexpan}, one can easily find that 
the solutions $u(z)$ and $v(z)$ to \eqnref{cond_eqn1} and \eqnref{cond_eqn2} with $H(z)=F_m(z)$ and $F(z)=F_n(z)$ satisfy
\begin{align}
\ds u(z) &= \sqrt{m}\gamma^m (1-2\lambda) \Scal_{\p \Om} \left[(\lambda I - \Kcal_{\p \Om} ^*)^{-1} [\zeta_{m}]\right](z), \quad z\in\Om, \label{Su} \\[2mm] 
\ds v(z) &= \sqrt{n}\gamma^n (1+2\lambda) \Dcal_{\p \Om} \big[(\lambda I - \Kcal_{\p \Om})^{-1} [\eta_{n}]\big](z), \quad z\in\Om.\label{Sv}
\end{align}

\subsection{Definition and properties of FPTs}\label{sec:Faber}

The Faber polynomials corresponding to $\Om$ form a basis for analytic functions in a region containing $\Om$ \cite{Duren:1983:UF}. We note that $z^m$ are the Faber polynomials corresponding to a disk centered at the origin. 
Following \cite{Choi:2018:GME}, we define the Faber polynomial polarization tensors by replacing $z^n$ with $F_n$ in \eqnref{def:GPT1} and \eqnref{def:GPT2}. 
\begin{definition}[FPTs]
For $m,n\in\NN$, we define
\begin{align*}
\FF_{mn}^{(1)}(\Om, \lambda)&=\int_{\p\Om}F_n(z)\left(\lambda I-\Kcal^*_{\p\Om}\right)^{-1}\left[\pd{F_m}{\nu}\right](z)\,d\sigma(z),\\
\FF_{mn}^{(2)}(\Om, \lambda)&=\int_{\p\Om}F_n(z)\left(\lambda I-\Kcal^*_{\p\Om}\right)^{-1}\left[\pd{\overline{F_m}}{\nu}\right](z)\,d\sigma(z).
\end{align*}
If not specified otherwise, we set $\lambda=\frac{\sigma+1}{2(\sigma-1)}$. 
\end{definition}

The background potential $H$ is real-valued and harmonic so that it satisfies
\beq\label{expanF:H}
H(z) = \sum_{m=1}^\infty \left(\beta_m F_m(z)+\overline{\beta_m F_m(z)}\right)
\eeq
for some complex coefficients $\beta_m$. The solution $u$ corresponding to $H$ then admits an expansion in terms of FPTs and $\Psi$ (see \cite{Choi:2018:GME} for the derivation): 
\begin{align}\label{eqn:FPT_expan}
u (z)-H(z)= &-\sum_{n=1}^\infty \sum_{m=1}^\infty\frac{1}{4\pi n}\Big( \beta_m \FF_{mn}^{(1)}+\overline{\beta_m}\, \FF_{mn}^{(2)}\Big){w^{-n}}\notag\\
&-\sum_{n=1}^\infty \sum_{m=1}^\infty\frac{1}{4\pi n}\Big( \overline{\beta_m}\, \overline{\FF_{mn}^{(1)}}  + \beta_m  \overline{\FF_{mn}^{(2)}} \Big)\overline{w^{-n}},\quad z=\Psi(w)\in\CC\setminus\overline{\Om}.
\end{align}
We call it the geometric multipole expansion of $u$. 
We highlight that it holds in the entire exterior region $\CC\setminus\overline{\Om}$ while the classical multipole expansion \eqnref{CP2} holds for sufficiently large $z$.

Recall that for each $m\in\NN$, $F_m$ is uniquely determined by $\{ a_j\}_{0\le j \le m-1}$. 
From the definition of GPTs and FPTs, we have the following:
\begin{lemma}\label{lemma:fabercoeffi}
For each $m\in\NN$, the Faber polynomial $F_m(z)$ can be written as
$
F_m(z) = \sum_{n=0}^{m} p_{mn} z^n
$
with some coefficients $p_{mn}$ depending only on $\{ a_j\}_{0\le j \le m-1}$. 
For all $m,k\in\NN$, we have \begin{align*}
\ds\ \FF_{mk}^{(1)} = \sum_{l=1} ^{k} \sum_{n=1} ^{m} p_{kl} \, p_{mn} \, \mathbb{N}_{nl}^{(1)}, 
\qquad \FF_{mk}^{(2)} = \sum_{l=1} ^{k} \sum_{n=1} ^{m} p_{kl} \, \overline{p_{mn}}\, \mathbb{N}_{nl}^{(2)}.
\end{align*}
\end{lemma}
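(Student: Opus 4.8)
The plan is to handle the two claims in sequence, with almost all of the content being a direct substitution argument once the algebraic identity $F_m(z)=\sum_{n=0}^m p_{mn}z^n$ is in place. For the first claim I would simply run the recursion \eqnref{Faberrecursion}: starting from $F_0\equiv 1$, an induction shows that $F_{m+1}(z)=zF_m(z)-ma_m-\sum_{k=0}^{m}a_kF_{m-k}(z)$ is again a monic polynomial of degree $m+1$ whose coefficients are polynomial expressions in $a_0,\dots,a_m$; hence $F_m$ has degree exactly $m$ and its coefficients depend only on $\{a_j\}_{0\le j\le m-1}$. I would then define $p_{mn}$ to be precisely these coefficients, so that $F_m(z)=\sum_{n=0}^{m}p_{mn}P_n(z)$ and, taking complex conjugates, $\overline{F_m(z)}=\sum_{n=0}^{m}\overline{p_{mn}}\,\overline{P_n(z)}$.

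For the second claim I would substitute these expansions into the definition of the FPTs and exploit linearity at every stage. Since the normal derivative $\partial/\partial\nu$, the operator $(\lambda I-\Kcal^*_{\p\Om})^{-1}$, and the boundary integral are all linear, writing $F_k=\sum_{l=0}^{k}p_{kl}P_l$ and $F_m=\sum_{n=0}^{m}p_{mn}P_n$ lets me pull the double sum outside to obtain
\begin{align*}
\FF_{mk}^{(1)}=\sum_{l=0}^{k}\sum_{n=0}^{m}p_{kl}\,p_{mn}\int_{\p\Om}P_l(z)\,(\lambda I-\Kcal^*_{\p\Om})^{-1}\!\left[\pd{P_n}{\nu}\right]\!(z)\,d\sigma(z),
\end{align*}
and the analogous expression for $\FF_{mk}^{(2)}$ with $p_{mn}$ replaced by $\overline{p_{mn}}$ and $P_n$ by $\overline{P_n}$. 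By the Definition of GPTs, each surviving integral is exactly $\NN_{nl}^{(1)}$ (respectively $\NN_{nl}^{(2)}$), so it only remains to verify that the index ranges collapse to $1\le n\le m$ and $1\le l\le k$.

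The one point that genuinely needs justification is the vanishing of the terms with $n=0$ or $l=0$. The $n=0$ terms are trivial, since $P_0\equiv 1$ forces $\pd{P_0}{\nu}=0$ (and likewise $\pd{\overline{P_0}}{\nu}=0$), killing those contributions. For the $l=0$ terms I would use the mean-zero property: because $P_n=z^n$ (and $\overline{P_n}$) is harmonic, the divergence theorem gives $\int_{\p\Om}\pd{P_n}{\nu}\,d\sigma=0$, so $\pd{P_n}{\nu}\in L^2_0(\p\Om)$; since $\lambda I-\Kcal^*_{\p\Om}$ is a bijection of $L^2_0(\p\Om)$ for $|\lambda|\ge 1/2$ (recalled after \eqnref{eqn:integral}), its inverse keeps the density in $L^2_0(\p\Om)$, and integration against $P_0\equiv 1$ therefore yields zero. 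This mean-zero step is the only place requiring a little care; once it is in hand the remaining sums run over $1\le n\le m$, $1\le l\le k$, giving the stated formulas, and everything else is bookkeeping of linear combinations.
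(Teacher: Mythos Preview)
Your proposal is correct and follows exactly the approach the paper intends: the paper states this lemma as an immediate consequence of the definitions of GPTs and FPTs (together with the recursion \eqnref{Faberrecursion}) and gives no further argument, so your substitution-and-linearity computation is precisely what is being left to the reader. Your explicit treatment of the $n=0$ and $l=0$ terms---using $\partial P_0/\partial\nu=0$ and the mean-zero preservation of $(\lambda I-\Kcal^*_{\p\Om})^{-1}$ on $L^2_0(\p\Om)$---is a useful addition that the paper does not spell out.
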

From this lemma with $m,n=1,2$, we obtain
\begin{align}
& \FF_{11}^{(1)}= \mathbb{N}_{11} ^{(1)},\quad  \FF_{11}^{(2)}=  \mathbb{N}_{11} ^{(2)}, \quad \FF_{21}^{(1)}= \mathbb{N}_{21} ^{(1)}- 2a_0 \mathbb{N}_{11} ^{(1)}, \quad \FF_{21}^{(2)}=  \mathbb{N}_{21} ^{(2)} - 2 \overline{a_0} \mathbb{N}_{11} ^{(2)},\label{FPTex1}\\
&\FF_{22}^{(1)} = \mathbb{N}_{22}^{(1)} - 4a_0 \mathbb{N}_{21}^{(1)} + 4a_0^2 \mathbb{N}_{11}^{(1)}, \quad
\FF_{22}^{(2)} = \mathbb{N}_{22}^{(2)} - 4 \text{Re}\left( a_0 \mathbb{N}_{21} ^{(2)} \right) + 4|a_0|^2 \mathbb{N}_{11}^{(2)}.\label{FPTex2}
\end{align}

\subsection{Matrix formulation for the transmission problem}\label{sec:coord}
We consider a semi-infinite matrix given by the Grunsky coefficients,
\begin{equation*}
C:=\big[c_{mk}\big]_{m,k=1}^\infty
=
\begin{bmatrix}
\ c_{11} & c_{12} & c_{13}& \cdots\\[2mm]
\ c_{21} & c_{22} & c_{23} & \cdots\\[2mm]
\ c_{31} & c_{32} & c_{33} & \cdots\\
\ \vdots & \vdots & \vdots & \ddots
\end{bmatrix},
\end{equation*}
and its symmetrization,
\begin{equation*}
G := \big[ g_{mk} \big]_{m,k=1}^\infty
=\begin{bmatrix}
\ g_{11} & g_{12} & g_{13}& \cdots\\[2mm]
\ g_{21} & g_{22} & g_{23} & \cdots\\[2mm]
\ g_{31} & g_{32} & g_{33} & \cdots\\
\ \vdots & \vdots & \vdots & \ddots
\end{bmatrix}.
\end{equation*}
We denote by $I$ the identity matrix and define
\begin{equation}\label{eqn:matrix}
\gamma^{\pm\NN}:=
\begin{bmatrix}
\  \gamma^{\pm1} & 0 & 0 &\cdots  \\[2mm]
\ 0 & \gamma^{\pm2} & 0 & \cdots\\[2mm]
\ 0 & 0 & \gamma^{\pm3} & \cdots \\
\ \vdots & \vdots &\vdots  & \ddots 
\end{bmatrix},
\qquad
\NN^{\pm\frac{1}{2}}:=
\begin{bmatrix}
\ 1  & 0 & 0 &\cdots \\[2mm]
\ 0 & {\sqrt{2}}^{\, \pm 1}  & 0 & \cdots\\[2mm]
\ 0 & 0 & {\sqrt{3}}^{\, \pm 1} & \cdots\\
\ \vdots & \vdots &\vdots  & \ddots 
\end{bmatrix}.
\end{equation}
Similarly, the matrix $\gamma^{\pm 2\NN}$ denotes the diagonal matrix whose $(n,n)$-entries are $\gamma^{\pm 2n}$.

From equation \eqnref{eqn:g:sym} and the definition of $g_{mk}$, $G$ is symmetric and satisfies
\beq\label{GGG}
G= \NN^{-\frac{1}{2}}\gamma^{-\mathbb{N}} C\gamma^{-\mathbb{N}}\NN^{\frac{1}{2}}.
\eeq
We can interpret the matrix $G$ as a linear operator from $l^2(\CC)$ to $l^2(\CC)$ given by
\begin{align}\label{G:l2}
    (x_m) &\longmapsto (y_m)\quad\mbox{with}\quad y_m = \sum_{k=1}^\infty g_{mk}x_k.
\end{align}
Here, $l^2(\CC)$ is the vector space consisting of all complex sequences $(x_m)$ satisfying 
$\sum_{m=1}^\infty |x_m|^2<\infty$. 
The inequality \eqnref{ineq:Grunsky} and the symmetricity of $G$ imply
\begin{align*}
\left\lVert G \right\rVert^2
& = \sup_{\left\lVert (x_k) \right\rVert=1} \sum_{m=1}^\infty \left| \sum_{k=1}^\infty g_{mk} x_k \right|^2 = \sup_{\left\lVert (x_m) \right\rVert=1} \sum_{k=1}^\infty \left| \sum_{m=1}^\infty g_{mk} x_m \right|^2\leq 1.
\end{align*} 
In fact, the above inequality is strict. 
A quasiconformal curve (or quasicircle) is the image of the unit circle under a quasi-conformal mapping of the complex plane. It is known, by Ahlfors \cite{Ahlfors:1963:QR} in 1963, that 
a closed Jordan curve $\Gamma\subset \CC$ is a quasiconformal curve if and only if there exists a finite number $K$ such that
\beq\notag
\min\left(\text{diam}(\Gamma_1),\, \text{diam}(\Gamma_2)\right) \le K|z_1-z_2|\quad\mbox{for any } z_1, z_2 \in \Gamma,
\eeq
where $\Gamma_1$ and $\Gamma_2$ are two arcs of which $\Gamma\setminus \{z_1,z_2\}$ consists. The symbol $\text{diam}(L)$ stands for the diameter of a curve $L$, {\it i.e.}, 
$\text{diam}(L)= \sup\left\{ |z'-z''| : z',z''\in L \right\}.$
Hence, a Lipschitz curve is quasiconformal.
According to \cite[Theorems 9.12-13]{Pommerenke:1975:UF}, it holds that $\left\lVert G \right\rVert_{l^2\rightarrow l^2} \le \kappa$ for some $\kappa\in[0,1)$ if and only if $\p \Om$ is quasiconformal; we recommend \cite{Ahlfors:1963:QR, Beckermann:2018:BOP, Kuehnau:1971:VKG, Springer:1964:FEQ} for more properties of quasiconformality and the Grunsky coefficients.
 Therefore, we have the following lemma.
 \begin{lemma}\label{quasiconformal}
The operator $G$ given by \eqnref{G:l2} satisfies
$$\left\lVert G \right\rVert < 1.$$
\end{lemma}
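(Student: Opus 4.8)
The plan is to upgrade the non-strict bound $\left\lVert G \right\rVert \le 1$, which has already been established just above the statement from the Grunsky inequality \eqnref{ineq:Grunsky} together with the symmetricity of $G$, into the strict bound $\left\lVert G \right\rVert < 1$ by exploiting the regularity of $\p\Om$. The essential point I would stress first is why the equality case of \eqnref{ineq:Grunsky} is \emph{not} enough: that case gives only pointwise strictness, namely $\sum_{k} \big| \sum_{m} g_{mk} x_m \big|^2 < \sum_{m} |x_m|^2$ for every nonzero sequence when $\Om$ has positive measure, but a supremum over the unit sphere of $l^2(\CC)$ can equal $1$ even if it is never attained. Thus a \emph{uniform} spectral gap $\kappa<1$ cannot be read off from \eqnref{ineq:Grunsky} alone and must be produced by a separate, geometric argument.

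First I would invoke the standing hypothesis that $\p\Om$ is a $\mathcal{C}^2$-curve; since a compact $\mathcal{C}^2$ (indeed $\mathcal{C}^1$) boundary is Lipschitz, $\p\Om$ is a Lipschitz curve. As recorded in the discussion preceding the lemma, every Lipschitz curve satisfies the Ahlfors three-point condition \cite{Ahlfors:1963:QR} with a finite constant $K$, hence $\p\Om$ is quasiconformal. I would then apply the characterization of Pommerenke \cite[Theorems 9.12-13]{Pommerenke:1975:UF}, already quoted above: a closed Jordan curve is quasiconformal if and only if its associated symmetrized Grunsky operator obeys $\left\lVert G \right\rVert_{l^2\rightarrow l^2} \le \kappa$ for some $\kappa\in[0,1)$. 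Combining the two, the quasiconformality of $\p\Om$ yields $\left\lVert G \right\rVert \le \kappa < 1$, which is exactly the assertion of the lemma.

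I expect the main obstacle here to be conceptual rather than computational, and indeed there is essentially no computation to perform. The one thing that must be got right is the logical chain $\mathcal{C}^2 \Rightarrow \text{Lipschitz} \Rightarrow \text{quasiconformal} \Rightarrow \left\lVert G \right\rVert < 1$, and in particular the recognition that the geometric notion of quasiconformality is precisely the device that converts the merely pointwise strict Grunsky inequality into a uniform operator-norm bound strictly below one. Once this chain is in place, the conclusion follows immediately from the cited theorems.
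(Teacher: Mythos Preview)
Your proposal is correct and follows exactly the approach the paper itself takes in the discussion immediately preceding the lemma: the chain $\mathcal{C}^2 \Rightarrow \text{Lipschitz} \Rightarrow \text{quasiconformal} \Rightarrow \lVert G \rVert \le \kappa < 1$, with the last step supplied by \cite[Theorems 9.12--13]{Pommerenke:1975:UF}. Your additional remark that the pointwise strict Grunsky inequality alone cannot yield a uniform spectral gap is a useful clarification but not part of the paper's argument.
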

The operator $4\lambda^2 I - {\gamma^{-2\mathbb{N}} \overline{C}\gamma^{-2\mathbb{N}} C}$ is related to the transmission problem \eqnref{cond_eqn0} as shown in Lemma \ref{invseries}. The following lemma shows its invertibility.
\begin{lemma}\label{thm:nearextreme}
For $|\lambda|\geq\frac{1}{2}$, the semi-infinite matrix $4\lambda^2 I - {\gamma^{-2\mathbb{N}} \overline{C}\gamma^{-2\mathbb{N}} C}$ is invertible in the sense of a linear operator on $l^2(\CC)$ and each entry of its inverse is bounded independently of $\lambda$. 
\end{lemma}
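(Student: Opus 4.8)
The plan is to collapse the operator onto the symmetrized Grunsky operator $G$ by means of the factorization \eqnref{GGG}, and then invert the resulting matrix with a Neumann series controlled by $\lVert G\rVert<1$. First I would re-express $4\lambda^2 I-\gamma^{-2\NN}\overline{C}\gamma^{-2\NN}C$ through $G$. From \eqnref{GGG} we have $\gamma^{-\NN}C\gamma^{-\NN}=\NN^{\frac12}G\NN^{-\frac12}$, hence $C=\gamma^{\NN}\NN^{\frac12}G\NN^{-\frac12}\gamma^{\NN}$, and, since $\gamma^{\NN}$ and $\NN^{\frac12}$ are real diagonal matrices, also $\overline{C}=\gamma^{\NN}\NN^{\frac12}\overline{G}\NN^{-\frac12}\gamma^{\NN}$. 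Substituting these into $\gamma^{-2\NN}\overline{C}\gamma^{-2\NN}C$ and collecting the commuting diagonal factors (the powers of $\gamma$ telescoping to the identity in the middle) yields the diagonal similarity
\[
4\lambda^2 I-\gamma^{-2\NN}\overline{C}\gamma^{-2\NN}C=D\big(4\lambda^2 I-\overline{G}G\big)D^{-1},\qquad D:=\gamma^{-\NN}\NN^{\frac12}.
\]

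Second, I would invert the central factor $4\lambda^2 I-\overline{G}G$ on $l^2(\CC)$. By Lemma \ref{quasiconformal} we have $\lVert G\rVert=:\kappa<1$, and since complex conjugation of the entries preserves the operator norm, $\lVert\overline{G}G\rVert\le\lVert\overline{G}\rVert\,\lVert G\rVert=\kappa^2<1$. For $|\lambda|\ge\frac12$ we have $|4\lambda^2|=4|\lambda|^2\ge1>\kappa^2\ge\lVert\overline{G}G\rVert$, so the Neumann series
\[
R:=\big(4\lambda^2 I-\overline{G}G\big)^{-1}=\frac{1}{4\lambda^2}\sum_{j=0}^\infty\frac{1}{(4\lambda^2)^{j}}\,(\overline{G}G)^{j}
\]
converges in operator norm and defines a bounded inverse with $\lVert R\rVert\le(4|\lambda|^2-\kappa^2)^{-1}\le(1-\kappa^2)^{-1}$, a bound \emph{independent of} $\lambda$. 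The inverse of the original operator is then $DRD^{-1}$, whose $(m,n)$-entry equals $\gamma^{n-m}\sqrt{m/n}\,R_{mn}$; since $|R_{mn}|\le\lVert R\rVert\le(1-\kappa^2)^{-1}$ for all $m,n$, each entry is bounded by $\gamma^{n-m}\sqrt{m/n}\,(1-\kappa^2)^{-1}$, uniformly in $\lambda$, which is the second assertion.

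The main obstacle is the rigorous passage from the (clean) invertibility of $4\lambda^2 I-\overline{G}G$ to that of the original operator as a bounded operator on $l^2(\CC)$, because the conjugating matrix $D=\gamma^{-\NN}\NN^{\frac12}$ is an \emph{unbounded} diagonal operator (at least one of $D,D^{-1}$ fails to be $l^2$-bounded), so invertibility is not transferred by the similarity for free. I would resolve this not by invoking the similarity formally but by checking directly that $DRD^{-1}$ is a genuine two-sided $l^2$-bounded inverse, using the decay of the Grunsky entries $g_{mk}$ forced by $\partial\Om\in\mathcal{C}^2$ in combination with the weight $\gamma^{n-m}$ to dominate the growth of $D^{-1}$. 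The uniform-in-$\lambda$ entrywise bound, which is what the later shape-recovery formulas actually require, is by contrast robust and follows at once from $\lVert R\rVert\le(1-\kappa^2)^{-1}$ together with the entry identity above.
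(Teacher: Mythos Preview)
Your argument is essentially identical to the paper's: both factor $4\lambda^2 I-\gamma^{-2\NN}\overline{C}\gamma^{-2\NN}C=D(4\lambda^2 I-\overline{G}G)D^{-1}$ with $D=\gamma^{-\NN}\NN^{\frac12}$, invert the middle factor by a Neumann series using $\lVert G\rVert<1$, and read off the entrywise bound $\sqrt{m/k}\,\gamma^{k-m}(1-\lVert G\rVert^2)^{-1}$. You are in fact more scrupulous than the paper on the $l^2$-invertibility point---the paper simply calls the diagonal conjugators ``invertible'' without addressing the unboundedness of $D$ or $D^{-1}$---but the downstream application (Theorem~\ref{rs}) only needs the $\lambda$-uniform entrywise bound, which both arguments establish rigorously.
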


\pf
It is straightforward to see from \eqnref{GGG} that 
\beq\label{matrix:decom:relation}
4\lambda^2 I - \gamma^{-2\mathbb{N}} \overline{C}\gamma^{-2\mathbb{N}} C=\NN^{\frac{1}{2}}\gamma^{-\NN}\left(4\lambda^2 I - \overline{G}G\right)\gamma^{\NN}\NN^{-\frac{1}{2}}.
\eeq
The diagonal matrices $\gamma^{\pm\NN}$ and $\NN^{\pm\frac{1}{2}}$ are invertible. 
From Lemma \ref{quasiconformal}, the operator $4\lambda^2I - \overline{G}G$ is invertible
and satisfies
$$
\left\lVert \left( 4\lambda^2I - \overline{G}G  \right)^{-1} \right\rVert
 = \left\lVert \frac{1}{4\lambda^2} \sum_{n=0}^\infty \left(\frac{1}{4\lambda^2}\overline{G}G\right)^n\right\rVert
 \leq \left(4\lambda^2-\|G\|^2\right)^{-1}\leq \left(1-\|G \|^{2}\right)^{-1}.$$
Since each entry of a matrix is bounded by the operator norm of the matrix, we have the uniform boundedness
$$\left|\left(\left( 4\lambda^2I - \overline{G}G  \right)^{-1} \right)_{mk}\right|\leq \left(1-\|G \|^{2}\right)^{-1}\quad\mbox{for all }m,k.$$
Hence, we have from \eqnref{matrix:decom:relation} that
$$
\left|\left(\left( 4\lambda^2 I - \gamma^{-2\mathbb{N}} \overline{C}\gamma^{-2\mathbb{N}} C \right)^{-1} \right)_{mk}\right| \leq   \sqrt{\frac{m}{k}}\gamma^{k-m}\left(1-\|G \|^{2}\right)^{-1}.
$$
Note that the right side is independent of $\lambda$. 
\qed

\begin{lemma}[\cite{Choi:2018:GME}]\label{invseries}
For each $m$, it holds that
\begin{align*}
\left(\lambda I-\Kcal^*_{\p \Om}\right)^{-1}[\zeta_m]
&=\frac{1}{2}\sum_{k=1}^\infty \sqrt{\frac{k}{m}} \frac{1}{\gamma^{m+k}}\left( a_{mk}\zeta_k+b_{mk}\overline{\zeta_{k}}\right) ,\\
\left(\lambda I-\Kcal_{\p \Om}\right)^{-1}[\eta_m]
&=\frac{1}{2}\sum_{k=1}^\infty \sqrt{\frac{k}{m}}   \frac{1}{\gamma^{m+k}} \left( a_{mk} \eta_k + b_{mk} \overline{\eta_{k}}\right)  ,
\end{align*}
where $a_{mk}$ and $b_{mk}$ are given by
\begin{align*}
A &=[a_{mk}]_{m,k=1}^\infty=8\lambda \gamma^{2\mathbb{N}} \left( 4 \lambda^2 I - \gamma^{-2\mathbb{N}} C \gamma^{-2\mathbb{N}} \overline{C} \right)^{-1},\\
 B& =[b_{mk}]_{m,k=1}^\infty= 4C\left( 4\lambda^2 I - \gamma^{-2\mathbb{N}} \overline{C}\gamma^{-2\mathbb{N}} C \right)^{-1}.
\end{align*}
\end{lemma}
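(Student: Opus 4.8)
The plan is to solve the defining equation $(\lambda I-\KstarOmega)[\phi]=\zeta_m$ directly, by expanding the unknown $\phi$ in the system $\{\zeta_k,\overline{\zeta_k}\}_{k\ge 1}$ and reducing the problem to an explicit semi-infinite linear system governed by the symmetrized Grunsky matrix $G$. The only input needed is Lemma \ref{lemma:seriesexpan}, which gives $\KstarOmega[\zeta_k]=\frac12\sum_{j}g_{kj}\overline{\zeta_j}$; since the kernel of $\KstarOmega$ is real-valued, taking complex conjugates yields $\KstarOmega[\overline{\zeta_k}]=\frac12\sum_j\overline{g_{kj}}\,\zeta_j$. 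First I would substitute the ansatz $\phi=\sum_k(p_k\zeta_k+q_k\overline{\zeta_k})$ and collect the coefficients of $\zeta_j$ and of $\overline{\zeta_j}$ separately. Because $\zeta_k\propto e^{ik\theta}/h$ and $\overline{\zeta_k}\propto e^{-ik\theta}/h$ reduce to the Fourier basis after clearing the common scale factor, matching coefficients is legitimate, and using the symmetry $g_{kj}=g_{jk}$ from \eqnref{eqn:g:sym} this turns the equation into the coupled pair $\lambda p-\tfrac12\overline{G}q=e_m$ and $\lambda q-\tfrac12 Gp=0$, where $p=(p_k)$, $q=(q_k)$, and $e_m$ is the $m$th standard basis vector.

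Next I would eliminate $q=\frac{1}{2\lambda}Gp$ and solve $(4\lambda^2 I-\overline{G}G)p=4\lambda e_m$, which gives $p=4\lambda(4\lambda^2 I-\overline{G}G)^{-1}e_m$ and $q=2G(4\lambda^2 I-\overline{G}G)^{-1}e_m$. The invertibility of $4\lambda^2 I-\overline{G}G$ on $l^2(\CC)$ for $|\lambda|\ge\frac12$, together with the convergence of the Neumann series that justifies the termwise handling of the infinite sums, is exactly the content of Lemma \ref{quasiconformal} (i.e. $\|G\|<1$) and Lemma \ref{thm:nearextreme}; this is what makes the formal coefficient-matching rigorous and guarantees $\phi\in L^2(\p\Om)$.

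The remaining task is to rewrite the $G$-resolvent in terms of the $C$-resolvent appearing in the statement. Using \eqnref{matrix:decom:relation} I would write $(4\lambda^2 I-\overline{G}G)^{-1}=\gamma^{\NN}\NN^{-\frac12}\,M\,\NN^{\frac12}\gamma^{-\NN}$ with $M=(4\lambda^2 I-\gamma^{-2\NN}\overline{C}\gamma^{-2\NN}C)^{-1}$; reading off the $(k,m)$-entry produces the scalar factor $\sqrt{m/k}\,\gamma^{k-m}$, so that $p_k=4\lambda\sqrt{m/k}\,\gamma^{k-m}M_{km}$. Substituting $g_{kj}=\sqrt{j/k}\,c_{kj}\gamma^{-(k+j)}$ into $q=2Gp$ collapses one sum into $CM$, which already identifies $q_k$ with $\tfrac12\sqrt{k/m}\,\gamma^{-(m+k)}b_{mk}$ once one sets $B=4CM$.

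The one genuinely delicate point — and the step I expect to be the main obstacle — is matching $p$ (and the normalization of $B$) to the specific forms of $A$ and $B$ in the statement, in which the roles of $C$ and $\overline{C}$ are interchanged relative to $M$. This is where the Grunsky symmetry $kc_{mk}=mc_{km}$ from \eqnref{eqn:Grunsky:sym1} does the work: it gives $C^{\top}=\NN^{-1}C\,\NN$, hence the diagonal similarity $(\gamma^{-2\NN}\overline{C}\gamma^{-2\NN}C)^{\top}=T\,(\gamma^{-2\NN}C\gamma^{-2\NN}\overline{C})\,T^{-1}$ with $T=\NN^{-1}\gamma^{2\NN}$, so that $M^{\top}=T\,(4\lambda^2 I-\gamma^{-2\NN}C\gamma^{-2\NN}\overline{C})^{-1}T^{-1}$. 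Converting $M_{km}$ to an entry of this transposed resolvent supplies precisely the factor needed to turn $4\lambda\sqrt{m/k}\,\gamma^{k-m}M_{km}$ into $\tfrac12\sqrt{k/m}\,\gamma^{-(m+k)}a_{mk}$ with $A=8\lambda\gamma^{2\NN}(4\lambda^2 I-\gamma^{-2\NN}C\gamma^{-2\NN}\overline{C})^{-1}$, and the same symmetry confirms $kb_{mk}=mb_{km}$, reconciling the index convention. Finally, the identity for $(\lambda I-\KOmega)^{-1}[\eta_m]$ follows verbatim from the same computation, using the expansion of $\KOmega[\eta_m]$ in Lemma \ref{lemma:seriesexpan} in place of that for $\KstarOmega[\zeta_m]$, since the two carry identical Grunsky structure.
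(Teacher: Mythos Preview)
The paper does not give its own proof of Lemma~\ref{invseries}; the result is simply quoted from \cite{Choi:2018:GME} and used as a black box. There is therefore nothing in this paper to compare your argument against.

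That said, your proposal is correct and is the natural proof. The ansatz $\phi=\sum_k(p_k\zeta_k+q_k\overline{\zeta_k})$, the reduction to the $2\times 2$ block system in $(p,q)$ via Lemma~\ref{lemma:seriesexpan}, and the elimination step leading to $(4\lambda^2 I-\overline{G}G)p=4\lambda e_m$ are exactly what one expects; the invertibility you invoke from Lemmas~\ref{quasiconformal} and \ref{thm:nearextreme} is the right justification. Your handling of the ``delicate'' index-swap via the Grunsky symmetry $C^\top=\NN^{-1}C\,\NN$ and the diagonal similarity $M^\top=T\widetilde{M}T^{-1}$ with $T=\NN^{-1}\gamma^{2\NN}$ is correct and indeed what is needed to match the stated $A$; the companion check for $B$ follows, as you note, from the push-through identity $(4\lambda^2 I-AB)^{-1}A=A(4\lambda^2 I-BA)^{-1}$ with $A=C$ and $B=\gamma^{-2\NN}\overline{C}\gamma^{-2\NN}$. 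The argument for $(\lambda I-\Kcal_{\p\Om})^{-1}[\eta_m]$ is verbatim identical, since $\Kcal_{\p\Om}$ acts on $\eta_m$ with the same Grunsky coefficients.
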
  
We can express FPTs in matrix form as follows:
\begin{theorem}[\cite{Choi:2018:GME}]\label{thm:FPT}
For each $m,k$, it holds that
\begin{align*}
\FF_{mk}^{(1)} (\Om, \lambda)
&=  4\pi k c_{mk} + 4\pi k \left( 1 - 4\lambda^2 \right) \left(C\left( 4\lambda^2 I - \gamma^{-2\mathbb{N}} \overline{C}\gamma^{-2\mathbb{N}} C \right)^{-1} \right)_{mk},\\
\FF_{mk}^{(2)}(\Om, \lambda)
&= 8\pi k\lambda \gamma^{2m}\delta_{mk} + 8 \pi k \lambda \gamma^{2m} \left( 1 - 4\lambda^2 \right) \left( \left( 4\lambda^2 I - \gamma^{-2\mathbb{N}} \overline{C}\gamma^{-2\mathbb{N}} C \right)^{-1} \right)_{mk}.
\end{align*}
Here, $\delta_{mk}$ is the Kronecker delta function.
\end{theorem}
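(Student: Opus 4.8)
The plan is to evaluate the FPT integrals directly by passing to the curvilinear density basis $\{\zeta_m,\overline{\zeta_m}\}$, thereby reducing everything to the matrix resolvents of Lemma \ref{invseries}. The first step is to write the boundary source in this basis. Since Lemma \ref{lemma:seriesexpan} gives $F_m=-2\sqrt{m}\,\gamma^m\,\Scal_{\p\Om}[\zeta_m]$ on $\overline{\Om}$ and $F_m$ is smooth across $\p\Om$, the interior jump relation for $\partial\Scal_{\p\Om}/\partial\nu$ together with the series $\Kcal^*_{\p\Om}[\zeta_m]=\frac12\sum_{k}\sqrt{k/m}\,c_{mk}\gamma^{-m-k}\overline{\zeta_k}$ yields
\[
\pd{F_m}{\nu}=\sqrt{m}\,\gamma^m\,\zeta_m-\sum_{k=1}^\infty \sqrt{k}\,\frac{c_{mk}}{\gamma^{k}}\,\overline{\zeta_k},
\]
and $\partial\overline{F_m}/\partial\nu$ is its complex conjugate because $\nu$ and $\gamma$ are real.

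Next I would apply $(\lambda I-\Kcal^*_{\p\Om})^{-1}$ termwise using Lemma \ref{invseries}. The conjugated basis functions are handled by observing that $\Kcal^*_{\p\Om}$ has a real kernel and $\lambda$ is real, so $(\lambda I-\Kcal^*_{\p\Om})^{-1}[\overline{\zeta_k}]=\overline{(\lambda I-\Kcal^*_{\p\Om})^{-1}[\zeta_k]}$; this brings in the conjugated coefficients $\overline{a_{kl}},\overline{b_{kl}}$. I would then integrate against $F_k$, for which the only facts needed come from $F_k(\Psi(\gamma e^{i\theta}))=\gamma^k e^{ik\theta}+\sum_{l}c_{kl}\gamma^{-l}e^{-il\theta}$ and $d\sigma=h(\rho_0,\theta)\,d\theta$, namely
\[
\int_{\p\Om}F_k\,\zeta_l\,d\sigma=2\pi\sqrt{l}\,\frac{c_{kl}}{\gamma^{l}},\qquad \int_{\p\Om}F_k\,\overline{\zeta_l}\,d\sigma=2\pi\sqrt{k}\,\gamma^{k}\,\delta_{kl}.
\]
Substituting these and using the Grunsky symmetry $l\,c_{kl}=k\,c_{lk}$ from \eqnref{eqn:Grunsky:sym1} to pull out a common factor $k$, the two FPTs collapse to
\[
\frac{1}{\pi k}\FF^{(1)}_{mk}=\big(A\gamma^{-2\NN}C+B-C\gamma^{-2\NN}\overline{A}-C\gamma^{-2\NN}\overline{B}\gamma^{-2\NN}C\big)_{mk},
\]
\[
\frac{1}{\pi k}\FF^{(2)}_{mk}=\big(\overline{A}+\overline{B}\gamma^{-2\NN}C-\overline{C}\gamma^{-2\NN}A\gamma^{-2\NN}C-\overline{C}\gamma^{-2\NN}B\big)_{mk},
\]
where $A=[a_{mk}]$ and $B=[b_{mk}]$ are the matrices of Lemma \ref{invseries}.

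The final step, which I expect to be the main obstacle, is to insert the closed forms $A=8\lambda\gamma^{2\NN}\widetilde{R}$ and $B=4CR$, with $R=(4\lambda^2 I-\gamma^{-2\NN}\overline{C}\gamma^{-2\NN}C)^{-1}$ and $\widetilde{R}=(4\lambda^2 I-\gamma^{-2\NN}C\gamma^{-2\NN}\overline{C})^{-1}$, and then simplify. This rests on three intertwining identities: conjugation swaps the resolvents, $\overline{\widetilde{R}}=R$; the diagonal similarity $\widetilde{R}\gamma^{-2\NN}=\gamma^{-2\NN}\widetilde{R}'$ with $\widetilde{R}'=(4\lambda^2 I-C\gamma^{-2\NN}\overline{C}\gamma^{-2\NN})^{-1}$; and the intertwining $CR=\widetilde{R}'C$, which follows from $C(4\lambda^2 I-\gamma^{-2\NN}\overline{C}\gamma^{-2\NN}C)=(4\lambda^2 I-C\gamma^{-2\NN}\overline{C}\gamma^{-2\NN})C$. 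With these one checks that the two $\lambda$-linear blocks cancel identically: for $\FF^{(1)}$ the terms $A\gamma^{-2\NN}C$ and $C\gamma^{-2\NN}\overline{A}$ both equal $8\lambda\,CR$, and for $\FF^{(2)}$ the terms $\overline{B}\gamma^{-2\NN}C$ and $\overline{C}\gamma^{-2\NN}B$ both equal $4\,\overline{C}\gamma^{-2\NN}CR$.

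What survives is reduced by the single resolvent identity $\gamma^{-2\NN}\overline{C}\gamma^{-2\NN}C\,R=4\lambda^2R-I$, a rearrangement of $R^{-1}=4\lambda^2 I-\gamma^{-2\NN}\overline{C}\gamma^{-2\NN}C$. For $\FF^{(1)}$ this turns $B-C\gamma^{-2\NN}\overline{B}\gamma^{-2\NN}C$ into $4C+(1-4\lambda^2)B$, giving $\FF^{(1)}_{mk}=4\pi k\,c_{mk}+4\pi k(1-4\lambda^2)(CR)_{mk}$; for $\FF^{(2)}$ the remaining $\overline{A}-8\lambda\,\overline{C}\gamma^{-2\NN}CR=8\lambda(\gamma^{2\NN}-\overline{C}\gamma^{-2\NN}C)R$ becomes $8\lambda\big((1-4\lambda^2)\gamma^{2\NN}R+\gamma^{2\NN}\big)$, and reading off the diagonal entry $\gamma^{2m}$ yields $\FF^{(2)}_{mk}=8\pi k\lambda\gamma^{2m}\delta_{mk}+8\pi k\lambda\gamma^{2m}(1-4\lambda^2)R_{mk}$. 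The only delicate part throughout is the bookkeeping of the $\sqrt{\,\cdot\,}$ normalizations and of which of the two resolvents carries the conjugation.
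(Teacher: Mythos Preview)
Your proof is correct. Note, however, that the paper does not supply its own proof of this theorem: it is quoted verbatim from \cite{Choi:2018:GME} and stated without argument, so there is no in-paper proof to compare against. What you have written is a complete and self-contained derivation from the ingredients the paper does provide (Lemmas \ref{lemma:seriesexpan} and \ref{invseries}, the Grunsky symmetry \eqnref{eqn:Grunsky:sym1}, and the curvilinear coordinates of subsection~\ref{subsec:series}).

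I checked your computations in detail. Your expression for $\partial F_m/\partial\nu$ follows exactly from $F_m=-2\sqrt{m}\,\gamma^m\,\Scal_{\p\Om}[\zeta_m]$ on $\overline{\Om}$ and the interior jump formula. The two pairing integrals $\int_{\p\Om}F_k\zeta_l\,d\sigma$ and $\int_{\p\Om}F_k\overline{\zeta_l}\,d\sigma$ are correct, and your use of $l\,c_{nl}=n\,c_{ln}$ to extract the overall factor $n$ is the right move. The matrix identities you isolate --- $\overline{R}=\widetilde{R}$, $\widetilde{R}\gamma^{-2\NN}=\gamma^{-2\NN}\widetilde{R}'$, $CR=\widetilde{R}'C$, and the resolvent relation $\gamma^{-2\NN}\overline{C}\gamma^{-2\NN}C\,R=4\lambda^2R-I$ --- are each one-line algebraic facts and suffice for the cancellations you claim: $A\gamma^{-2\NN}C=C\gamma^{-2\NN}\overline{A}=8\lambda CR$ in the $\FF^{(1)}$ line, $\overline{B}\gamma^{-2\NN}C=\overline{C}\gamma^{-2\NN}B=4\overline{C}\gamma^{-2\NN}CR$ in the $\FF^{(2)}$ line, and the final reductions to $4C+4(1-4\lambda^2)CR$ and $8\lambda\gamma^{2\NN}\big((1-4\lambda^2)R+I\big)$ respectively. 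Reading off the $(m,k)$-entries gives precisely the stated formulas.
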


For an ellipse given by
 \beq\label{Psi:E}
 \Psi(w)=w+a_0+\frac{a_1}{w},
 \eeq
it holds that (see, for example, \cite{Jung:2018:NSS} for the derivation)
\beq\label{Faber:ellipse}
F_{m}(z) = \left( {\tilde{z}}  + \sqrt{{\tilde{z}}^2 - a_1}\right)^m + \left({\tilde{z}} - \sqrt{{\tilde{z}}^2- a_1} \right)^m\quad\mbox{with }{\tilde{z}} = \frac{z-a_0}{2}
\eeq
and 
\beq\label{FPsi:Ellipse}
F_m\left(\Psi(w)\right)=w^m +\frac{a_1^m}{w^m}.
\eeq
Hence, the Grunsky coefficients are
$$c_{mk} = \delta_{mk} a_1^m$$
and we obtain the following corollary from Theorem \ref{thm:FPT}.
\begin{cor}\label{cor:FPT}
The FPTs for an ellipse, namely $E$, given by \eqnref{Psi:E} are diagonal matrices with
\begin{align*}
\FF_{mm}^{(1)} (E, \lambda)
&= 4\pi m a_1^m \left( 4\lambda^2 \gamma^{4m} - |a_1|^{2m} \right)^{-1} \left( \gamma^{4m} - |a_1|^{2m} \right),\\
\FF_{mm}^{(2)}(E, \lambda)
&= 8 \pi m \lambda \gamma^{2m} \left( 4\lambda^2 \gamma^{4m} - |a_1|^{2m} \right)^{-1} \left( \gamma^{4m} - |a_1|^{2m} \right).
\end{align*}
\end{cor}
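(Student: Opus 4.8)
The plan is to specialize Theorem \ref{thm:FPT} to the ellipse, where every matrix in sight becomes diagonal. Since the conformal map \eqnref{Psi:E} yields the Grunsky coefficients $c_{mk}=\delta_{mk}a_1^m$, the matrix $C$ is diagonal, hence so are $\overline{C}$, $\gamma^{\pm2\mathbb{N}}$ and $\NN^{\pm\frac12}$. First I would compute the central matrix
\[
\gamma^{-2\mathbb{N}}\overline{C}\gamma^{-2\mathbb{N}}C,
\]
which is again diagonal with $(m,m)$-entry $\gamma^{-4m}|a_1|^{2m}$ (using $\overline{a_1}^{\,m}a_1^m=(\overline{a_1}a_1)^m=|a_1|^{2m}$). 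Consequently $4\lambda^2 I-\gamma^{-2\mathbb{N}}\overline{C}\gamma^{-2\mathbb{N}}C$ is diagonal with entries $4\lambda^2-\gamma^{-4m}|a_1|^{2m}$, and its inverse is the diagonal matrix whose $(m,m)$-entry is
\[
\left(4\lambda^2-\gamma^{-4m}|a_1|^{2m}\right)^{-1}=\frac{\gamma^{4m}}{4\lambda^2\gamma^{4m}-|a_1|^{2m}}.
\]

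Because $C$ and this inverse are both diagonal, the products $C(4\lambda^2 I-\cdots)^{-1}$ and $(4\lambda^2 I-\cdots)^{-1}$ appearing in Theorem \ref{thm:FPT} vanish off the diagonal; together with the leading $c_{mk}$ and $\gamma^{2m}\delta_{mk}$ terms this immediately establishes that $\FF^{(1)}$ and $\FF^{(2)}$ are diagonal. For the diagonal entries I would substitute $c_{mm}=a_1^m$ and the inverse computed above into Theorem \ref{thm:FPT}, obtaining for $\FF^{(2)}$
\[
\FF_{mm}^{(2)}=8\pi m\lambda\gamma^{2m}\left[1+(1-4\lambda^2)\frac{\gamma^{4m}}{4\lambda^2\gamma^{4m}-|a_1|^{2m}}\right],
\]
and the analogous expression for $\FF_{mm}^{(1)}$ with prefactor $4\pi m a_1^m$ in place of $8\pi m\lambda\gamma^{2m}$. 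The only remaining step is to place the bracketed quantity over the common denominator $4\lambda^2\gamma^{4m}-|a_1|^{2m}$; its numerator $4\lambda^2\gamma^{4m}-|a_1|^{2m}+(1-4\lambda^2)\gamma^{4m}$ collapses to $\gamma^{4m}-|a_1|^{2m}$ once the $4\lambda^2\gamma^{4m}$ terms cancel, yielding the stated formulas.

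There is no genuine obstacle here: the entire argument is a diagonal specialization of Theorem \ref{thm:FPT} followed by a one-line algebraic simplification. The only point requiring a little care is confirming that the \emph{same} bracket $\tfrac{\gamma^{4m}-|a_1|^{2m}}{4\lambda^2\gamma^{4m}-|a_1|^{2m}}$ factors out of both $\FF_{mm}^{(1)}$ and $\FF_{mm}^{(2)}$, the difference between the two residing entirely in the prefactors inherited from the $c_{mk}$ and $\gamma^{2m}\delta_{mk}$ leading terms of Theorem \ref{thm:FPT}.
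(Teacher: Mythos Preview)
Your proposal is correct and follows exactly the approach the paper intends: the corollary is stated immediately after the identity $c_{mk}=\delta_{mk}a_1^m$ and is presented as a direct consequence of Theorem \ref{thm:FPT}, which is precisely the diagonal specialization plus algebraic simplification you carry out.
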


\section{Explicit reconstruction formula for the conformal mapping}\label{sec:ERF}
In this section we derive an inversion formula for the conformal mapping corresponding to an inclusion with extreme or near-extreme conductivity by using Theorem \ref{thm:FPT}.

We first derive a formula for the conformal mapping corresponding to an inclusion with extreme conductivity:
\begin{theorem}\label{thm:exact}
Let $\Om$ be a simply connected planar domain with $\mathcal{C}^2$-boundary. Assume that $\Om$ is either insulating or perfectly conducting, {\it i.e.}, $\lambda = \pm\frac{1}{2}$.
Then the coefficients of the exterior conformal mapping corresponding to $\Om$ satisfy
\begin{align*}
&\gamma^2 = \frac{\mathbb{N}_{11}^{(2)}(\Om,\lambda)}{8\pi \lambda}, \quad a_0 = \frac{\mathbb{N}_{12}^{(2)} (\Om,\lambda ) }{2\mathbb{N}_{11} ^{(2)} (\Om,\lambda) }, \\
& a_m =\frac{1}{4\pi m} \sum_{n=1} ^{m} p_{mn}\, {\mathbb{N}_{n1} ^{(1)} (\Om,\lambda)}\quad\mbox{for } m\ge 1.
\end{align*}
Here, $p_{m1}, p_{m2}, \cdots, p_{mm}$ denote the coefficients of the Faber polynomials as in Lemma \ref{lemma:fabercoeffi}. 
Each $a_m$ is uniquely determined by $\mathbb{N}_{11}^{(2)}$, $\mathbb{N}_{12}^{(2)}$ and $\{\mathbb{N}_{n1}^{(1)}\}_{1\le n\le m}$. 
\end{theorem}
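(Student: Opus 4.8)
The plan is to specialize the matrix representation of FPTs in Theorem \ref{thm:FPT} to the extreme conductivity case and then read off the conformal mapping coefficients directly. The pivotal observation is that $\lambda = \pm\frac{1}{2}$ forces $4\lambda^2 = 1$, so the factor $(1 - 4\lambda^2)$ multiplying the resolvent term in both formulas of Theorem \ref{thm:FPT} vanishes identically. What remains collapses to the clean pair
\begin{align*}
\FF_{mk}^{(1)}(\Om, \lambda) = 4\pi k\, c_{mk}, \qquad \FF_{mk}^{(2)}(\Om, \lambda) = 8\pi k\lambda\gamma^{2m}\delta_{mk},
\end{align*}
valid for all $m,k\in\NN$. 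These tie FPTs directly to the Grunsky coefficients $c_{mk}$ (hence to the conformal mapping) with no resolvent inversion left, so the theorem becomes an exercise in extracting $\gamma$, $a_0$, and $a_m$ from these identities together with the FPT-to-GPT conversion already recorded in \eqnref{FPTex1} and Lemma \ref{lemma:fabercoeffi}.

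For $\gamma^2$ and $a_0$ I would use the second identity. Setting $m=k=1$ gives $\FF_{11}^{(2)} = 8\pi\lambda\gamma^2$; since $\FF_{11}^{(2)} = \mathbb{N}_{11}^{(2)}$ by \eqnref{FPTex1}, this yields $\gamma^2 = \mathbb{N}_{11}^{(2)}/(8\pi\lambda)$. For $a_0$, the off-diagonal entry vanishes, $\FF_{21}^{(2)} = 0$ (because $\delta_{21}=0$), while \eqnref{FPTex1} gives $\FF_{21}^{(2)} = \mathbb{N}_{21}^{(2)} - 2\overline{a_0}\,\mathbb{N}_{11}^{(2)}$. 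Equating and solving produces $\overline{a_0} = \mathbb{N}_{21}^{(2)}/(2\mathbb{N}_{11}^{(2)})$; conjugating and invoking the symmetricity relations $\overline{\mathbb{N}_{21}^{(2)}} = \mathbb{N}_{12}^{(2)}$ and $\overline{\mathbb{N}_{11}^{(2)}} = \mathbb{N}_{11}^{(2)}$ of Lemma \ref{NNproperty} (the latter showing $\mathbb{N}_{11}^{(2)}$ is real) delivers $a_0 = \mathbb{N}_{12}^{(2)}/(2\mathbb{N}_{11}^{(2)})$.

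The coefficients $a_m$ come from the first identity. By the Grunsky recursion $c_{m1} = m a_m$, taking $k=1$ gives $\FF_{m1}^{(1)} = 4\pi m\, a_m$, so it remains to rewrite $\FF_{m1}^{(1)}$ in terms of GPTs. Lemma \ref{lemma:fabercoeffi} with $k=1$ yields $\FF_{m1}^{(1)} = \sum_{n=1}^{m} p_{11}\,p_{mn}\,\mathbb{N}_{n1}^{(1)}$; since $F_1(z)=z-a_0$ has leading coefficient $p_{11}=1$, this simplifies to $\FF_{m1}^{(1)} = \sum_{n=1}^{m} p_{mn}\,\mathbb{N}_{n1}^{(1)}$, and dividing by $4\pi m$ gives the stated formula.

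Finally, the uniqueness claim follows by induction on $m$. Lemma \ref{lemma:fabercoeffi} guarantees that each $p_{mn}$ depends only on $a_0,\ldots,a_{m-1}$. Since $a_0$ is already expressed through $\mathbb{N}_{11}^{(2)}$ and $\mathbb{N}_{12}^{(2)}$, and $a_1 = \mathbb{N}_{11}^{(1)}/(4\pi)$ uses only $\mathbb{N}_{11}^{(1)}$, the formula for $a_m$ involves the previously determined $a_0,\ldots,a_{m-1}$ (through the $p_{mn}$) together with the new data $\{\mathbb{N}_{n1}^{(1)}\}_{1\le n\le m}$; hence each $a_m$ is uniquely fixed by $\mathbb{N}_{11}^{(2)}$, $\mathbb{N}_{12}^{(2)}$, and $\{\mathbb{N}_{n1}^{(1)}\}_{1\le n\le m}$. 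There is no genuine obstacle here, as the substantive work is already contained in Theorem \ref{thm:FPT}; the only place to slip is the conjugation bookkeeping in the derivation of $a_0$ and the tracking of which GPTs enter each $p_{mn}$ for the recursive uniqueness statement.
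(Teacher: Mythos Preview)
Your proof is correct and follows essentially the same approach as the paper: specialize Theorem \ref{thm:FPT} at $\lambda=\pm\tfrac12$ to collapse the FPTs to $\FF_{mk}^{(1)}=4\pi k\,c_{mk}$ and $\FF_{mk}^{(2)}=8\pi k\lambda\gamma^{2m}\delta_{mk}$, then combine with \eqnref{FPTex1}, Lemma \ref{lemma:fabercoeffi} (for $k=1$), Lemma \ref{NNproperty}, and $c_{m1}=ma_m$ to read off $\gamma^2$, $a_0$, and $a_m$, with the recursive uniqueness handled by induction via the dependence of $p_{mn}$ on $a_0,\dots,a_{m-1}$. The only minor point you omit is the observation that $\mathbb{N}_{11}^{(2)}\neq 0$ (since $\gamma>0$), which justifies the division in the formula for $a_0$.
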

\pf
From Lemma \ref{lemma:fabercoeffi} with $k=1$ and \eqnref{FPTex1}, it holds that
\begin{align}
&\FF_{m1}^{(1)}\left(\Om, \lambda \right) = \sum_{n=1} ^{m} p_{mn}\, {\mathbb{N}_{n1} ^{(1)} (\Om,\lambda)} \quad \mbox{for }m\geq 1,\notag \\
&\FF_{11}^{(2)}\left(\Om,\lambda \right) = \mathbb{N}_{11}^{(2)}\left(\Om,\lambda \right),\notag \\
&\mathbb{F}_{21}^{(2)} \left(\Om, \lambda \right) = \mathbb{N}_{21}^{(2)} \left(\Om, \lambda \right) - 2\overline{a_0}\mathbb{N}_{11}^{(2)} \left(\Om, \lambda \right). \label{cases:eqn:thm:exact}
\end{align}
Since $1-4\lambda^2=0$ from the assumption, Theorem \ref{thm:FPT} further implies that
\begin{align*}
\FF_{mk}^{(1)} (\Om, \lambda)
&=  4\pi k c_{mk} ,\\
\FF_{mk}^{(2)}(\Om, \lambda)
&= 8\pi k\lambda \gamma^{2m}\delta_{mk}\quad\mbox{for any }m,k\in\NN.
\end{align*}
In particular, it holds that
\begin{align*}
&\FF_{m1}^{(1)}\left(\Om, \lambda \right) = 4\pi c_{m1} \quad \mbox{for }m\geq 1,\\
&\FF_{11}^{(2)}\left(\Om,\lambda \right) = 8\pi \lambda \gamma^2, \quad \mathbb{F}_{21}^{(2)} \left(\Om, \lambda \right) = 0.
\end{align*}
By using Lemma \ref{NNproperty} and \eqnref{cases:eqn:thm:exact}, we then obtain
\begin{align*}
&\gamma^2 =\frac{\FF_{11}^{(2)}\left(\Om,\lambda \right) }{8\pi \lambda} =\frac{\mathbb{N}_{11}^{(2)}\left(\Om,\lambda \right)}{8\pi \lambda},\\[1mm]
&a_0 = \dfrac{\overline{\mathbb{N}_{21} ^{(2)} \left(\Om, \lambda \right) - \mathbb{F}_{21} ^{(2)} \left(\Om, \lambda \right)} }{2 \mathbb{N}_{11} ^{(2)} \left(\Om, \lambda \right) } = \dfrac{\overline{\mathbb{N}_{21} ^{(2)} \left(\Om, \lambda \right)} }{2 \mathbb{N}_{11} ^{(2)} \left(\Om, \lambda \right) } = \dfrac{\mathbb{N}_{12} ^{(2)} \left(\Om,\lambda \right)}{2 \mathbb{N}_{11} ^{(2)} \left(\Om,\lambda\right) },\\[1mm]
&a_m = \frac{c_{m1}}{m} = \frac{\FF_{m1}^{(1)}\left(\Om, \lambda \right)}{4\pi m} = \frac{1}{4\pi m} \sum_{n=1} ^{m} p_{mn}\, {\mathbb{N}_{n1} ^{(1)} (\Om,\lambda)} \quad \mbox{for }m\geq 1.
\end{align*}
Note that $N_{11}^{(2)}$ is nonzero since $\gamma$ is positive. 
Since $p_{11}=1$, $a_0$ and $a_1$ are determined by $\mathbb{N}_{11}^{(2)}$, $\mathbb{N}_{12}^{(2)}$ and $\mathbb{N}_{11}^{(1)}$.
 For $a_m$ with $m\geq 2$, $\{p_{m1}, p_{m2}, \cdots, p_{mm} \}$ is uniquely determined by $\{ a_0, a_1, \cdots , a_{m-1}\}$. 
By induction, one can obtain $a_m$ from $\mathbb{N}_{11}^{(2)}$, $\mathbb{N}_{12}^{(2)}$ and $\{\mathbb{N}_{n1}^{(1)}\}_{1\le n\le m}$.
 \qed
\begin{remark}
For an insulating inclusion, a recursive relation similar to Theorem \ref{thm:exact} was previously obtained in \cite{Choi:2018:CEP}. In this study, using FPTs, we derive an exact relation that holds for the perfectly conducting case as well as the perfectly insulating case. We also emphasize that the formula in Theorem \ref{thm:exact} is much simpler than that in \cite{Choi:2018:CEP}.
\end{remark}

\smallskip

We now validate that this generalized formula approximately holds also for the near-extreme conductivity case as follows. 
\begin{theorem}[Conformal mapping recovery]\label{rs}
The coefficients of the conformal mapping associated with $\Om$ satisfy
\begin{align*}
\gamma^2 &= \frac{\mathbb{N}_{11}^{(2)}(\Om,\lambda)}{8\pi \lambda}  + O\left(|\lambda|-\frac{1}{2}\right), \\[1mm]
a_0 &= \frac{\mathbb{N}_{12}^{(2)} (\Om,\lambda ) }{2\mathbb{N}_{11} ^{(2)} (\Om,\lambda) } + O\left(|\lambda|-\frac{1}{2}\right), \\[1mm]
a_m &=\frac{1}{4\pi m} \sum_{n=1} ^{m} p_{mn}\, \mathbb{N}_{n1} ^{(1)} (\Om,\lambda) + O\left(|\lambda|-\frac{1}{2}\right)\quad\mbox{for } m\ge 1.
\end{align*}
\end{theorem}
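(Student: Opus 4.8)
The plan is to reduce Theorem \ref{rs} to the exact computation in Theorem \ref{thm:exact} by tracking how far the FPT formulas of Theorem \ref{thm:FPT} deviate from their values at $\lambda=\pm\frac12$. The crucial observation is that, beyond the benign prefactors, $\lambda$ enters the matrix expressions of Theorem \ref{thm:FPT} only through the scalar $1-4\lambda^2=(1-2\lambda)(1+2\lambda)$, which vanishes to first order at $\lambda=\pm\frac12$. Since we work in the regime $|\lambda|\ge\frac12$, a short estimate gives $|1-4\lambda^2|=O\!\left(|\lambda|-\tfrac12\right)$ as $|\lambda|\to\frac12^+$, with an absolute implied constant.

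Next I would bound the correction terms uniformly in $\lambda$. By Lemma \ref{thm:nearextreme}, for $|\lambda|\ge\frac12$ every entry of $(4\lambda^2 I-\gamma^{-2\NN}\overline{C}\gamma^{-2\NN}C)^{-1}$ is bounded by a constant depending only on $\Om$ (through $\|G\|<1$ and $\gamma$), independently of $\lambda$. As $C$ and $\gamma$ are fixed by the domain and $\lambda$ stays bounded near $\pm\frac12$, each correction term appearing in Theorem \ref{thm:FPT} is $O(1)$; multiplying by $1-4\lambda^2$ then yields
\[
\FF_{mk}^{(1)}(\Om,\lambda)=4\pi k\,c_{mk}+O\!\left(|\lambda|-\tfrac12\right),\qquad
\FF_{mk}^{(2)}(\Om,\lambda)=8\pi k\lambda\gamma^{2m}\delta_{mk}+O\!\left(|\lambda|-\tfrac12\right),
\]
which are precisely the extreme-conductivity identities used in Theorem \ref{thm:exact}, now perturbed by an $O(|\lambda|-\frac12)$ error.

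Finally I would run the algebra of Theorem \ref{thm:exact} verbatim, carrying the error along. The relations in \eqnref{FPTex1} and Lemma \ref{lemma:fabercoeffi} between FPTs and GPTs are algebraic identities independent of $\lambda$, so combining them with the perturbed formulas above and equating the two expressions for $\FF_{11}^{(2)}$, $\FF_{21}^{(2)}$, and $\FF_{m1}^{(1)}$ gives $\NN_{11}^{(2)}=8\pi\lambda\gamma^2+O(|\lambda|-\frac12)$, $\NN_{21}^{(2)}-2\overline{a_0}\NN_{11}^{(2)}=O(|\lambda|-\frac12)$, and $\sum_{n=1}^{m}p_{mn}\NN_{n1}^{(1)}=4\pi c_{m1}+O(|\lambda|-\frac12)$. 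Solving for $\gamma^2$, $a_0$, and $a_m$, and using $c_{m1}=m a_m$ together with the symmetricity $\overline{\NN_{21}^{(2)}}=\NN_{12}^{(2)}$ from Lemma \ref{NNproperty}, produces the three stated formulas. The only point requiring care—and the main (mild) obstacle—is that isolating $\gamma^2$ and $a_0$ requires dividing by $\lambda$ and by $\NN_{11}^{(2)}$; this does not amplify the error, because $|\lambda|$ is bounded away from $0$ in the near-extreme regime and $\NN_{11}^{(2)}=8\pi\lambda\gamma^2+O(|\lambda|-\frac12)\to4\pi\gamma^2\neq0$ stays bounded away from zero, so each quotient remains $O(|\lambda|-\frac12)$.
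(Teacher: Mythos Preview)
Your proposal is correct and follows essentially the same route as the paper: write the FPTs via Theorem \ref{thm:FPT}, observe that the deviation from the extreme-conductivity values is the factor $1-4\lambda^2$ times a matrix entry that Lemma \ref{thm:nearextreme} bounds uniformly in $\lambda$, and then rerun the algebra of Theorem \ref{thm:exact} through \eqnref{cases:eqn:thm:exact}. Your added remarks about dividing by $\lambda$ and by $\NN_{11}^{(2)}$ without losing the $O(|\lambda|-\tfrac12)$ order are a nice explicit justification of a step the paper leaves implicit.
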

\begin{proof}
From Theorem \ref{thm:FPT}, we have
\begin{align*}
&\FF_{11}^{(2)}\left(\Om,\lambda \right) = 8\pi \lambda \gamma^2 + 8\pi \lambda \gamma^2 \left( \frac{1}{4}-\lambda^2 \right) \left(\left( \lambda^2 I - \frac{\gamma^{-2\mathbb{N}} \overline{C}\gamma^{-2\mathbb{N}} C}{4} \right)^{-1} \right)_{11}  , \\
&\mathbb{F}_{21}^{(2)} \left(\Om, \lambda \right) = 8\pi \lambda \gamma^4\left( \frac{1}{4}-\lambda^2 \right) \left(\left( \lambda^2 I - \frac{\gamma^{-2\mathbb{N}} \overline{C}\gamma^{-2\mathbb{N}} C}{4} \right)^{-1} \right)_{21} , \\
&\FF_{m1}^{(1)}\left(\Om, \lambda \right) = 4\pi c_{m1} + 4\pi \left( \frac{1}{4}-\lambda^2 \right) \left(C\left( \lambda^2 I - \frac{\gamma^{-2\mathbb{N}} \overline{C}\gamma^{-2\mathbb{N}} C}{4} \right)^{-1} \right)_{m1} \quad \mbox{for }m\geq 1.
\end{align*}
We complete the proof by applying this relation to \eqnref{cases:eqn:thm:exact} and by using Lemma \ref{thm:nearextreme}.
\end{proof}

%%%%%%%%%%%%%%%%%%%%%%%%%%%%%%%%%%%%%%%%%%%%%%%%%

\section{Recovering the shape of an inclusion with arbitrary conductivity}\label{arbtcond}
In this section we derive an explicit formula that approximates the shape of an inclusion with arbitrary conductivity by using the concept of FPTs. We regard an inclusion as a perturbation of its equivalent ellipse and apply the asymptotic integral expression for the shape derivative of GPTs obtained in \cite{Ammari:2012:GPT}. As a result, we obtain an analytic reconstruction formula for the shape perturbation function. With this formula, one can non-iteratively approximate an inclusion of arbitrary conductivity with general shape, including a straight or asymmetric shape.

\subsection{Equivalent ellipse and modified GPTs}\label{sec:equiellipse}
For an inclusion of a general shape, we can find an equivalent ellipse that has the same first-order GPTs as shown in \cite{Ammari:2005:RCS, Bruehl:2003:DIT}. The equivalent ellipse provides a good initial guess for the optimization procedure \cite{Ammari:2012:GPT}. 
In the following, we derive an expression formula for an equivalent ellipse by employing the concept of FPTs.

An ellipse, say $E$, has the exterior conformal mapping
\beq\label{eeconformal}
\Psi_E(w) =  w + e_0 +\frac{e_1}{w}\quad \mbox{for }|w|>\gamma_e
\eeq
with some complex coefficients, $e_0$, $e_1$, and $\gamma>0$. 
From Corollary \ref{cor:FPT} and \eqnref{FPTex1}, we have
\begin{align*}
&\mathbb{N}_{11}^{(1)}(E, \lambda) = \FF_{11}^{(1)} (E, \lambda)=  4\pi e_1\left( 4\lambda^2 - \frac{ |e_1|^2}{\gamma_e^{4}} \right)^{-1}\left( 1 - \frac{ |e_1|^2}{\gamma_e^{4}} \right),\\
&\mathbb{N}_{11}^{(2)}(E, \lambda) = \FF_{11}^{(1)} (E, \lambda)= 8\pi \lambda \gamma_e^{2}\left( 4\lambda^2 - \frac{ |e_1|^2}{\gamma_e^{4}} \right)^{-1}\left( 1 - \frac{ |e_1|^2}{\gamma_e^{4}} \right).
\end{align*}
Also, 
we have
\beq\label{eqn:FF:E}
\FF_{m1}^{(1)} (E,\lambda) =\FF_{m1}^{(2)} (E,\lambda) = 0\quad\mbox{for }m\geq2
\eeq
and
\beq\notag
\mathbb{N}_{21} ^{(2)}(E,\lambda) - 2 \overline{e_0} \mathbb{N}_{11} ^{(2)}(E,\lambda) = \FF_{21}^{(2)}(E, \lambda) =0.
\eeq
From these relations and the symmetric properties of GPTs in \eqnref{eqn:GPT:symmetric}, we obtain the following lemma. 

\begin{lemma}[Equivalent ellipse]\label{ee} 
Let $E$ be the ellipse whose exterior conformal mapping is given by \eqnref{eeconformal} with
\begin{align}\label{def:ee}
\gamma_e^2 = \frac{\lambda \, \NN_{11}^{(2)}}{2\pi}   \frac{\bigl|\NN_{11}^{(2)}\bigr|^2 - \bigl|\NN_{11}^{(1)}\bigr|^2}{\bigl|\NN_{11}^{(2)}\bigr|^2 -4\lambda^2 \bigl|\NN_{11}^{(1)}\bigr|^2},\quad
e_0 = \frac{\mathbb{N}_{12}^{(2)}}{2 \mathbb{N}_{11} ^{(2)}},\quad
e_1 =2\lambda\gamma_e^2\,   \frac{ \NN_{11}^{(1)}}{ \NN_{11}^{(2)}}.
\end{align}
Then, $E$ satisfies
\begin{align*}
\mathbb{N}_{11}^{(j)}(E, \lambda)&=\mathbb{N}_{11}^{(j)}(\Om, \lambda) \quad \mbox{for }j=1,2, \\
\mathbb{N}_{21}^{(2)}(E, \lambda)&=\mathbb{N}_{21}^{(2)}(\Om, \lambda).
\end{align*}
\end{lemma}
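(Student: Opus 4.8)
The plan is to read the three required identities as a determined system for the ellipse parameters $(\gamma_e^2, e_1, e_0)$ and to verify that solving it reproduces exactly the expressions in \eqnref{def:ee}. The starting point is the collection of explicit ellipse GPTs assembled just above the statement: from Corollary \ref{cor:FPT} together with \eqnref{FPTex1} one has
\[
\NN_{11}^{(1)}(E,\lambda)=4\pi e_1\,\frac{1-|e_1|^2/\gamma_e^4}{4\lambda^2-|e_1|^2/\gamma_e^4},\qquad \NN_{11}^{(2)}(E,\lambda)=8\pi\lambda\gamma_e^2\,\frac{1-|e_1|^2/\gamma_e^4}{4\lambda^2-|e_1|^2/\gamma_e^4},
\]
while the relation $\FF_{21}^{(2)}(E,\lambda)=0$ displayed just above the statement gives $\NN_{21}^{(2)}(E,\lambda)=2\overline{e_0}\,\NN_{11}^{(2)}(E,\lambda)$. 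Thus $e_0$ enters only the third identity, whereas $e_1$ and $\gamma_e^2$ are governed by the first two; I would solve the $(e_1,\gamma_e^2)$ block first and then read off $e_0$.

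Writing $N_1=\NN_{11}^{(1)}(\Om,\lambda)$ and $N_2=\NN_{11}^{(2)}(\Om,\lambda)$, I would first divide the two first-order equations. The common factor $(1-|e_1|^2/\gamma_e^4)/(4\lambda^2-|e_1|^2/\gamma_e^4)$ cancels, leaving $N_1/N_2=e_1/(2\lambda\gamma_e^2)$, i.e. $e_1=2\lambda\gamma_e^2\,N_1/N_2$, which is precisely the third entry of \eqnref{def:ee}. Because $\lambda$ is real, this gives $|e_1|^2/\gamma_e^4=4\lambda^2|N_1|^2/|N_2|^2$; substituting this value back into the equation $\NN_{11}^{(2)}(E,\lambda)=N_2$ turns it into a single linear equation for $\gamma_e^2$. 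Expressing the two factors $4\lambda^2-|e_1|^2/\gamma_e^4$ and $1-|e_1|^2/\gamma_e^4$ over the common denominator $|N_2|^2$ and solving then yields exactly the first entry of \eqnref{def:ee}.

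For $e_0$, I would use that the first two identities are now in force, so in particular $\NN_{11}^{(2)}(E,\lambda)=N_2$. The relation $\NN_{21}^{(2)}(E,\lambda)=2\overline{e_0}\,\NN_{11}^{(2)}(E,\lambda)$ then shows that requiring $\NN_{21}^{(2)}(E,\lambda)=\NN_{21}^{(2)}(\Om,\lambda)$ is equivalent to $\overline{e_0}=\NN_{21}^{(2)}(\Om,\lambda)/(2N_2)$. Invoking Lemma \ref{NNproperty} and the symmetry \eqnref{eqn:GPT:symmetric}, which makes $\NN^{(2)}$ Hermitian, I would use that $N_2=\NN_{11}^{(2)}$ is real and that $\NN_{21}^{(2)}=\overline{\NN_{12}^{(2)}}$; conjugating then gives $e_0=\NN_{12}^{(2)}/(2\NN_{11}^{(2)})$, the middle entry of \eqnref{def:ee}. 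This closes the verification of all three identities.

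The one point requiring care beyond bookkeeping is the well-definedness of $E$: the derivation is legitimate only if $\gamma_e^2>0$ and $|e_1|\le\gamma_e^2$, so that $\Psi_E$ is an honest exterior conformal map of an ellipse. Both reduce to the strict inequality $4\lambda^2|N_1|^2<|N_2|^2$, which simultaneously guarantees that the denominator $|N_2|^2-4\lambda^2|N_1|^2$ in \eqnref{def:ee} does not vanish and that $|e_1|^2/\gamma_e^4<1$. I expect this to be the main obstacle, and I would handle it by expressing $N_1=\FF_{11}^{(1)}(\Om,\lambda)$ and $N_2=\FF_{11}^{(2)}(\Om,\lambda)$ via Theorem \ref{thm:FPT} and controlling the resolvent $(4\lambda^2 I-\gamma^{-2\NN}\overline{C}\gamma^{-2\NN}C)^{-1}$ through the strict Grunsky bound $\|G\|<1$ of Lemma \ref{quasiconformal} (equivalently via Lemma \ref{thm:nearextreme}); the strictness of $\|G\|<1$ is exactly what keeps the inequality strict and $E$ nondegenerate.
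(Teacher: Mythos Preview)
Your proposal is correct and follows exactly the route the paper indicates: the paper derives the ellipse formulas for $\NN_{11}^{(1)}(E,\lambda)$, $\NN_{11}^{(2)}(E,\lambda)$ and the relation $\NN_{21}^{(2)}(E,\lambda)=2\overline{e_0}\,\NN_{11}^{(2)}(E,\lambda)$ immediately before the lemma and then simply states that ``from these relations and the symmetric properties of GPTs'' the lemma follows, leaving the algebraic inversion you carry out to the reader. Your extra paragraph on well-definedness (the condition $4\lambda^2|N_1|^2<|N_2|^2$) is a legitimate concern that the paper does not address; the lemma is tacitly stated under the assumption that an equivalent ellipse exists.
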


Assuming $\sigma$ (or $\lambda$) is known, we set an equivalent ellipse $E$ as in Lemma \ref{ee}.
 We then denote by $F_m[E](z)$ the Faber polynomials corresponding to $\Psi_E$. We define the curvilinear coordinate system $(\rho,\theta)$ corresponding to $\Psi_E$ as in subsection \ref{subsec:series}.
For $z=\Psi_E(w)\in\p E$, it holds that
\begin{align}
&h(\rho_e,\theta) = \left| \frac{\p \Psi_E}{\p \rho} \right| = \left|w - \frac{e_1}{w}\right|,\quad\rho_e=\ln(\gamma_e),\label{h:nu:pE1}\\
&\nu_{\p E}(z) = \frac{w - \frac{e_1}{w}}{\left|w - \frac{e_1}{w}\right|},\label{h:nu:pE2}
\end{align}
where $\nu_{\p E}$ is unit outward normal to $\p E$. 

Recall that GPTs are defined in terms of the solutions to the transmission problem corresponding to the functions $z^n$, which are the Faber polynomials corresponding to a disk. We now modify GPTs by employing $F_m[E](z)$ instead of $z^n$ as follows.
\begin{definition}[Modified GPTs using {$F_m[E](z)$}] 
For $m,n\in\NN$, we define
\begin{align}\label{def:EE:1}
\mathbb{E}_{mn}^{(1)}(\Om, \lambda)&=\int_{\p\Om}F_n[E](z)\left(\lambda I-\Kcal^*_{\p\Om}\right)^{-1}\left[\pd{}{\nu}F_m[E]\right](z)\,d\sigma(z), \\ \label{def:EE:2}
\mathbb{E}_{mn}^{(2)} (\Om, \lambda)&=\int_{\p\Om}F_n[E](z)\left(\lambda I-\Kcal^*_{\p\Om}\right)^{-1}\left[\pd{}{\nu}\overline{F_m[E]}\right](z)\,d\sigma(z).
\end{align}
\end{definition}

To recover the shape of an inclusion of arbitrary shape, we will apply the shape derivative approach in Theorem \ref{dd}, in an analytical way in the following subsection. 
Unlike Theorem \ref{dd}, in which the difference of GPTs was used, we now consider the difference of  modified GPTs, that is, 
\beq\label{def:Delta}
\Delta_{mn}^{(j)} := \mathbb{E}_{mn}^{(j)}(\Om,\lambda) - \mathbb{E}_{mn}^{(j)} (E,\lambda)
\eeq
for $m,n\in\NN$ and $j=1,2$. 
As we find the equivalent ellipse $E$ from GPTs as shown in Lemma \ref{ee}, $\mathbb{E}_{mn}^{(j)}(E,\lambda)$ and $\mathbb{E}_{mn}^{(j)}(\Om,\lambda)$ can be obtained by using only $\mathbb{N}_{mn}^{(j)}(\Om,\lambda)$ and so can $\Delta_{mn}^{(j)}$. More precisely, we have the following. 
\begin{lemma}\label{lemm:pE}
Set $p_{mn}[E]$ to be the coefficients of the Taylor series of $F_m[E]$, {\it i.e.}, 
$
F_m[E](z) = \sum_{n=0}^{m} p_{mn}[E] z^n.
$
For each $m\geq2$, we have
\begin{align*}
&\Delta_{m1}^{(1)}= \mathbb{E}_{m1}^{(1)} (\Om,\lambda) = \sum_{n=1}^m p_{mn}[E]\, \NN_{n1}^{(1)} (\Om,\lambda),\\
&\Delta_{m1}^{(2)}= \mathbb{E}_{m1}^{(2)} (\Om,\lambda) = \sum_{n=1}^m \overline{p_{mn}[E]}\, \NN_{n1}^{(2)} (\Om,\lambda).
\end{align*}
\end{lemma}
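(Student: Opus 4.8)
The plan is to prove the two asserted identities in turn: first that $\mathbb{E}_{m1}^{(j)}(E,\lambda)=0$ for $m\ge 2$, which reduces $\Delta_{m1}^{(j)}$ to $\mathbb{E}_{m1}^{(j)}(\Om,\lambda)$; and then that $\mathbb{E}_{m1}^{(j)}(\Om,\lambda)$ expands into first-order GPTs with the Faber coefficients $p_{mn}[E]$. For the first step, I would observe that when the underlying domain in the modified GPTs \eqnref{def:EE:1}--\eqnref{def:EE:2} is taken to be the ellipse $E$ itself, the polynomials $F_m[E]$ appearing in the definition are precisely the Faber polynomials associated with $\p E$. Hence $\mathbb{E}_{mn}^{(j)}(E,\lambda)$ coincides with the genuine FPT $\FF_{mn}^{(j)}(E,\lambda)$, and the vanishing $\FF_{m1}^{(1)}(E,\lambda)=\FF_{m1}^{(2)}(E,\lambda)=0$ for $m\ge 2$ recorded in \eqnref{eqn:FF:E} immediately gives $\mathbb{E}_{m1}^{(j)}(E,\lambda)=0$, so that $\Delta_{m1}^{(j)}=\mathbb{E}_{m1}^{(j)}(\Om,\lambda)$ by \eqnref{def:Delta}.

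For the second step, I would write $F_m[E](z)=\sum_{n=0}^{m}p_{mn}[E]\,z^n$ together with $F_1[E](z)=z-e_0$ (from \eqnref{Faber:ex1} with $a_0=e_0$), and then use the linearity of $\partial/\partial\nu$, of $(\lambda I-\Kcal^*_{\p\Om})^{-1}$, and of the boundary integral. Because $\partial(z^0)/\partial\nu=0$, the constant term drops out and
\[
\mathbb{E}_{m1}^{(1)}(\Om,\lambda)=\sum_{n=1}^{m}p_{mn}[E]\int_{\p\Om}(z-e_0)\,(\lambda I-\Kcal^*_{\p\Om})^{-1}\!\left[\pd{z^n}{\nu}\right]\!(z)\,d\sigma(z).
\]
Comparing with the definition \eqnref{def:GPT1} of $\NN_{n1}^{(1)}$, the contribution of $z$ reproduces $\NN_{n1}^{(1)}(\Om,\lambda)$, so it remains to show the spurious $e_0$ contribution $\int_{\p\Om}(\lambda I-\Kcal^*_{\p\Om})^{-1}[\partial z^n/\partial\nu]\,d\sigma$ vanishes. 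The argument for $\mathbb{E}_{m1}^{(2)}$ is identical after replacing $z^n$ by $\overline{z^n}$ and $p_{mn}[E]$ by $\overline{p_{mn}[E]}$, using \eqnref{def:GPT2}.

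The crux, and the only point demanding care, is this vanishing of the $e_0$ correction --- equivalently, that the density $\varphi_n:=(\lambda I-\Kcal^*_{\p\Om})^{-1}[\partial z^n/\partial\nu]$ has zero mean on $\p\Om$. I would argue as follows. Since $z^n$ is entire harmonic, the divergence theorem gives $\int_{\p\Om}\partial z^n/\partial\nu\,d\sigma=\int_{\Om}\Delta z^n\,dx=0$. Applying $\lambda I-\Kcal^*_{\p\Om}$ to $\varphi_n$ yields $\lambda\varphi_n-\Kcal^*_{\p\Om}[\varphi_n]=\partial z^n/\partial\nu$; integrating over $\p\Om$ and invoking the duality
\[
\int_{\p\Om}\Kcal^*_{\p\Om}[\varphi_n]\,d\sigma=\int_{\p\Om}\varphi_n\,\Kcal_{\p\Om}[1]\,d\sigma=\tfrac12\int_{\p\Om}\varphi_n\,d\sigma,
\]
which uses $\Kcal_{\p\Om}[1]=\tfrac12$ from Lemma \ref{lemma:seriesexpan}, one obtains $(\lambda-\tfrac12)\int_{\p\Om}\varphi_n\,d\sigma=0$. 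As $0<\sigma\ne1<\infty$ forces $|\lambda|>\tfrac12$, in particular $\lambda\ne\tfrac12$, the mean vanishes. The same computation with $\overline{z^n}$ (also harmonic) settles the $j=2$ case, and combining with the first step completes the proof.
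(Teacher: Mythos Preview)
Your proof is correct and follows essentially the same two-step approach as the paper: first identifying $\mathbb{E}_{mn}^{(j)}(E,\lambda)$ with $\FF_{mn}^{(j)}(E,\lambda)$ and invoking \eqnref{eqn:FF:E} to obtain $\Delta_{m1}^{(j)}=\mathbb{E}_{m1}^{(j)}(\Om,\lambda)$, then expanding via $F_1[E](z)=z-e_0$ and the linearity of the integral. The paper's proof simply writes ``by using the fact that $F_1[E](z)=z-e_0$, we complete the proof'' and leaves implicit the point you spell out carefully, namely that $(\lambda I-\Kcal^*_{\p\Om})^{-1}[\partial z^n/\partial\nu]$ has zero mean on $\p\Om$ so that the $e_0$ term contributes nothing; your duality argument using $\Kcal_{\p\Om}[1]=\tfrac12$ is a clean way to make this explicit.
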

\proof
It is straightforward to see from \eqnref{def:EE:1} and \eqnref{def:EE:2} that
$$\mathbb{E}_{mn}^{(j)}(E,\lambda)=\FF_{mn}^{(j)}(E,\lambda).$$
From \eqnref{eqn:FF:E}, we have
\beq\label{eqn:EE:E}
\mathbb{E}_{m1}^{(1)} (E,\lambda) =\mathbb{E}_{m1}^{(2)} (E,\lambda) = 0\quad\mbox{for }m\geq2.
\eeq
By using the fact that $F_1[E](z)=z-e_0$, we complete the proof.
\qed

\subsection{Analytic shape recovery}

We now regard an inclusion $\Om$ as a perturbation of its equivalent ellipse $E$, {\it i.e.}, 
\beq\label{def:deform:ellipse}
\p \Om = \left\{ z + \ep f_{\p E} (z) \nu_{\p E}(z) : \, z\in \p E\right\}
\eeq
for some real-valued $\mathcal{C}^1$ function $f_{\p E}$. We set 
\begin{align}\notag
\ds f(\theta) =  \frac{1}{h(\rho_e,\theta)}\left(f_{\p E} \circ\Psi_E\right) (e^{\rho_e+i\theta}).
\end{align}
From \eqnref{h:nu:pE1} and \eqnref{h:nu:pE2}, it holds that
\beq\notag
z+ \ep f_{\p E}(z)\nu_{\p E}(z) = w+e_0+\frac{e_1}{w}+\ep\left(w-\frac{e_1}{w}\right)f(\theta).
\eeq
As $f$ is real-valued, it admits the Fourier series
$$f(\theta)= \widehat{f}_0 + \sum_{k=1}^\infty \left( \widehat{f}_k e^{ik\theta} + \overline{\widehat{f}_k} e^{-ik\theta} \right)=2 \mbox{\text{Re}}\left(\sum_{k=0}^\infty \widehat{f}_k e^{ik\theta} \right).$$

\begin{theorem}[Perturbed ellipse recovery] \label{et}
Let $\Om$ be a simply connected planar domain with $\mathcal{C}^2$-boundary, whose conductivity $\sigma$ is assumed to be known.  
Assume $\NN_{mn}^{(j)}(\Om,\lambda)$ for $ 1\leq m,n\leq M,\ j=1,2$ are given for some $M\in\NN$. 
Then, $\p\Om$ can be approximated as the parametrized curve:
$$w \longmapsto w + e_0 + \frac{e_1}{w} + \ep\left(w - \frac{e_1}{w}\right)2 \mbox{\text{Re}}\left(\sum_{k=0}^{M-1} \widehat{f}_k e^{ik\theta} \right), \qquad |w|=\gamma_e,
$$
where for each $m,n\in\NN$, the Fourier coefficients of $f$ satisfies
\begin{align}
&\ep\widehat{f}_{m-n} = \frac{\gamma_e^{m+n}}{4\pi  \lambda mn}  \left(s_n \overline{t_m}\, \Delta_{mn}^{(1)}  +s_m t_n\, \overline{\Delta_{mn}^{(1)}}  +  s_m s_n\, \Delta_{mn}^{(2)}+ \overline{t_m} t_n\, \overline{\Delta_{mn}^{(2)}}  \right) + O(\ep^2), \label{f2}  \\[1mm]
&\ep\widehat{f}_{m+n} = \frac{\gamma_e^{m+n}}{2\pi mn} \left(  \overline{t_m t_n}\,\Delta_{mn}^{(1)}+ s_m s_n \,  \overline{\Delta_{mn}^{(1)}}    +  s_m \overline{t_n}\, \Delta_{mn}^{(2)} +  s_n\overline{t_m}\, \overline{\Delta_{mn}^{(2)}} \right)  + O(\ep^2) \label{f1}
\end{align}
with 
\beq\label{def:sm:tm}
s_m = \frac{\lambda \gamma_e^{2m} - \frac{|e_1|^{2m}}{2\gamma_e^{2m}}}{\gamma_e^{4m} - |e_1|^{2m}},\quad 
t_m = \frac{e_1^m\left( \lambda -  \frac{1}{2} \right)}{\gamma_e^{4m} - |e_1|^{2m}}.
\eeq
\end{theorem}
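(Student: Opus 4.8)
The plan is to apply Lemma \ref{lemma:shapederivative} with the roles of the harmonic polynomials played by the Faber polynomials $F_m[E]$ and $F_n[E]$ of the equivalent ellipse, rather than the monomials $z^m$, so that the left-hand side of \eqnref{lemma:shapederivative:eqn} becomes exactly the difference of modified GPTs $\Delta_{mn}^{(j)}$ defined in \eqnref{def:Delta}. Concretely, taking $H=F_m[E]$ and $F=F_n[E]$ (resp.\ $H=\overline{F_m[E]}$ for the $j=2$ case) turns the left side into $\mathbb{E}_{mn}^{(j)}(\Om,\lambda)-\mathbb{E}_{mn}^{(j)}(E,\lambda)=\Delta_{mn}^{(j)}$, up to the $O(\ep^2)$ error, since $\Om$ is the $\ep$-perturbation \eqnref{def:deform:ellipse} of $E$. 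The right side then reduces to a boundary integral over $\p E$ of $f_{\p E}$ weighted by products of the normal and tangential derivatives of the solutions $u,v$ associated with $F_m[E]$ and $F_n[E]$.

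The key computational input is to evaluate those normal and tangential derivatives explicitly on $\p E$. Here I would exploit the series-solution machinery of subsection \ref{subsec:series}: for the ellipse the Grunsky matrix is diagonal, $c_{mk}=\delta_{mk}a_1^m$ (with $a_1=e_1$), so by Lemma \ref{invseries} the densities $(\lambda I-\Kcal^*_{\p E})^{-1}[\zeta_m]$ and $(\lambda I-\Kcal_{\p E})^{-1}[\eta_m]$ are single-mode expressions, and the representations \eqnref{Su}--\eqnref{Sv} together with Lemma \ref{lemma:seriesexpan} give $u$ and $v$ in closed form in the $(\rho,\theta)$ coordinates. Using \eqnref{eqn:normalderiv} for the normal derivative, the analogous tangential-derivative formula $\frac{1}{h}\partial_\theta$, and the explicit scale factor \eqnref{h:nu:pE1}, I expect $\partial_\nu u|^-$, $\partial_T u|^-$ and their $v$-counterparts to come out as finite trigonometric sums in $e^{\pm in\theta}$ whose coefficients are precisely the quantities $s_m,t_m$ in \eqnref{def:sm:tm}. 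The combination $\partial_\nu u\,\partial_\nu v+\tfrac1\sigma\partial_T u\,\partial_T v$ appearing in \eqnref{lemma:shapederivative:eqn} then becomes a sum of exponentials $e^{i(\pm m\pm n)\theta}$.

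Substituting $f=\frac{1}{h}(f_{\p E}\circ\Psi_E)$ and $d\sigma=h\,d\theta$, the weighting factor $h$ cancels appropriately so that the integral against $f_{\p E}$ becomes an integral of $f(\theta)$ against those exponentials over $[0,2\pi)$; by orthogonality this isolates the Fourier coefficients $\widehat{f}_{m-n}$ and $\widehat{f}_{m+n}$. Collecting the four bilinear terms in $s_m,t_m,s_n,t_n$ coming from the $j=1$ and $j=2$ equations (and their complex conjugates, arising from using both $F_m[E]$ and $\overline{F_m[E]}$) yields the two linear combinations on the right of \eqnref{f2} and \eqnref{f1}. The main obstacle I anticipate is the bookkeeping: keeping track of which products of modes survive the $\theta$-integration and matching the conjugation pattern so that each of $\Delta_{mn}^{(1)},\overline{\Delta_{mn}^{(1)}},\Delta_{mn}^{(2)},\overline{\Delta_{mn}^{(2)}}$ lands with the correct coefficient built from $s_m,t_m,s_n,t_n$; a secondary subtlety is justifying that the $O(\ep^2)$ term from Lemma \ref{lemma:shapederivative} survives the division by the (nonvanishing, by Lemma \ref{quasiconformal}) denominators $\gamma_e^{4m}-|e_1|^{2m}$ hidden in $s_m,t_m$, so that the final error remains $O(\ep^2)$ uniformly in the finitely many modes $1\le m,n\le M$.
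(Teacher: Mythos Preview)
Your proposal is correct and follows essentially the same route as the paper: apply Lemma \ref{lemma:shapederivative} with $\Om_0=E$ and $H=F_m[E]$, $F=F_n[E]$ (and $H=\overline{F_m[E]}$ for $j=2$), compute $u,v$ in closed form via \eqnref{Su}--\eqnref{Sv}, Lemma \ref{lemma:seriesexpan}, and the diagonal Grunsky matrix of the ellipse, read off $\partial_\nu u,\partial_T u,\partial_\nu v,\partial_T v$ on $\p E$ in terms of $s_m,t_m$, and use the $h$-cancellations so that the boundary integral picks out $\widehat f_{m\pm n}$. The paper then phrases the last step exactly as you do: the equations for $\Delta_{mn}^{(1)},\Delta_{mn}^{(2)}$ together with their conjugates form a $4\times 4$ linear system in $\widehat f_{m+n},\overline{\widehat f_{m+n}},\widehat f_{m-n},\overline{\widehat f_{m-n}}$, whose explicit inversion yields \eqnref{f2}--\eqnref{f1}.
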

\proof

Let $u$ and $v$ satisfy \eqnref{cond_eqn1} and \eqnref{cond_eqn2} with $H(z)=F_m[E](z)$ and $F(z)=F_n[E](z)$, respectively.

At $z = \Psi_E(\gamma_e e^{i\theta}) \in \partial E$, we have by applying \eqnref{eqn:normalderiv} to \eqnref{FPsi:Ellipse} that
\begin{align*}
\frac{\p }{\p \nu} F_m[E]\Big|^- &= \frac{1}{h(\rho_e, \theta)} \frac{\p }{\p \rho}F_m[E]\Big|^- = \frac{m}{h(\rho_e, \theta)} \left(e^{m(\rho_e + i\theta)} - e_1^m e^{-m(\rho_e + i\theta)}\right),\\
\frac{\p }{\p T}F_m[E]\Big|^-& = \frac{1}{h(\rho_e, \theta)} \frac{\p }{\p \theta}F_m[E]\Big|^- = i\frac{m}{h(\rho_e, \theta)} \left( e^{m(\rho_e+ i\theta)} - e_1^m e^{-m(\rho_e + i\theta)}\right).
\end{align*}
From Lemma \ref{invseries} and the fact that $c_{mk} = \delta_{mk} e_1^m$, it follows
\begin{align*}
\left(\lambda I-\Kcal^*_{\p E}\right)^{-1}[\zeta_m] &= \left(\lambda^2 - \frac{|e_1|^{2m}}{4\gamma_e^{4m}}\right)^{-1} \left( \lambda \zeta_m+ \frac{e_1^{m}}{2\gamma_e^{2m}} \overline{\zeta_{m}} \right), \\
\left(\lambda I-\Kcal_{\p E}\right)^{-1}[\eta_n] &= \left(\lambda^2 - \frac{|e_1|^{2n}}{4\gamma_e^{4n}}\right)^{-1} \left( \lambda \eta_n + \frac{e_1^{n}}{2\gamma_e^{2n}} \overline{\eta_{n}} \right).
\end{align*}
We then obtain from \eqnref{Su}, \eqnref{Sv} and Lemma \ref{lemma:seriesexpan} that
\begin{align}
u(z)& =\left(\lambda-\frac{1}{2}\right)\left(\lambda^2 - \frac{|e_1|^{2m}}{4\gamma_e^{4m}}\right)^{-1} \left(\lambda  F_m[E](z) + \frac{e_1^{m}}{2\gamma_e^{2m}} \overline{F_m[E](z)} \right), \label{u}\\[1mm]
v(z)& =\left(\lambda+\frac{1}{2}\right)\left(\lambda^2 - \frac{|e_1|^{2n}}{4\gamma_e^{4n}}\right)^{-1} \left(\lambda F_n[E](z) + \frac{e_1^{n}}{2\gamma_e^{2n}} \overline{F_n[E](z)} \right)\quad\mbox{in }\Om.\label{v} 
\end{align}
From \eqnref{eqn:normalderiv}, it follows that
\begin{align*}
&\pd{}{\nu}u(z)\Big|^-
= \frac{m d_m}{\gamma_e^{m}h(\rho_e, \theta)} \left(\lambda-\frac{1}{2}\right) \left(s_m e^{i m\theta}  - t_m e^{-i m\theta}\right) , \\[2mm]
&\pd{}{\nu}v(z)\Big|^-
= \frac{n d_n}{\gamma_e^{n}h(\rho_e, \theta)} \left(\lambda+\frac{1}{2}\right)  \left(s_n e^{i n\theta}  -  t_n e^{-i n\theta}\right)
\end{align*}
with $s_m$, $t_m$ given by \eqnref{def:sm:tm} and
 $$d_m=\frac{\gamma_e^{4m} - |e_1|^{2m}}{\lambda^2 - \frac{|e_1|^{2m}}{4\gamma_e^{4m}}}.$$
In the same manner, it holds that
\begin{align*}
&\pd{}{T}u(z)\Big|^-
= i\, \frac{m d_m}{\gamma_e^{m}h(\rho_e, \theta)} \left(\lambda-\frac{1}{2}\right) \left(s_m e^{i m\theta}  + t_m e^{-i m\theta} \right) , \\[2mm]
&\pd{}{T}v(z)\Big|^-
= i\, \frac{n d_n}{\gamma_e^{n}h(\rho_e, \theta)} \left(\lambda+\frac{1}{2}\right) \left(s_n e^{i n\theta}  +  t_n e^{-i n\theta}\right) .
\end{align*}

In view of \eqnref{def:EE:1} and \eqnref{def:EE:2}, we get the asymptotic integral expressions for the shape derivative by replacing $\Om_0$ with $E$ in Lemma \ref{lemma:shapederivative}:
\begin{align*}
&\mathbb{E}_{mn}^{(1)}(\Om,\lambda) - \mathbb{E}_{mn}^{(1)} (E,\lambda)\notag\\
&= \epsilon(\sigma-1) \int_{\p E} f_{\p E}(z) \left(\pd{{u}}{\nu}\Big|^-\pd{v}{\nu}\Big|^- +\frac{1}{\sigma}\pd{{u}}{T}\Big|^- \pd{v}{T}\Big|^- \right)(z)\, d \sigma(z) + O(\epsilon^2)
\end{align*}
and 
\begin{align*}
&\mathbb{E}_{mn}^{(2)}(\Om,\lambda) - \mathbb{E}_{mn}^{(2)} (E,\lambda)\notag\\
&= \epsilon(\sigma-1) \int_{\p E} f_{\p E}(z) \left( \pd{\overline{u}}{\nu}\Big|^-\pd{v}{\nu}\Big|^- +\frac{1}{\sigma}\pd{\overline{u}}{T}\Big|^- \pd{v}{T}\Big|^- \right)(z)\, d \sigma(z) + O(\epsilon^2).
\end{align*}
Remind that the length element on $\p\Om$ is $d\sigma(z)=h(\rho_0,\theta)d\theta$. 
From the asymptotic formula for the tangential and normal derivatives of $u$ and $v$, we obtain
\begin{align}
\Delta_{mn}^{(1)}
&= \mathbb{E}_{mn}^{(1)}(\Om,\lambda) - \mathbb{E}_{mn}^{(1)} (E,\lambda)\nonumber\\[1mm]
&= \frac{2 \pi \epsilon m n d_m d_n}{\gamma_e^{m+n}} \left(s_m s_n  \overline{\widehat{f}_{m+n}}  + t_m t_n  \widehat{f}_{m+n}  - 2\lambda \left(s_m t_n  \overline{\widehat{f}_{m-n}}   +  s_n t_m  \widehat{f}_{m-n}\right) \right)  + O(\epsilon^2) \label{1}
\end{align}
and 
\begin{align}
\Delta_{mn}^{(2)}
&=\mathbb{E}_{mn}^{(2)} (\Om,\lambda) - \mathbb{E}_{mn}^{(2)} (E,\lambda)\nonumber \\[1mm]
&= \frac{2 \pi \epsilon m n d_m d_n}{\gamma_e^{m+n}} \left(2\lambda \left(s_m s_n \widehat{f}_{m-n} +  \overline{t_m} t_n \overline{\widehat{f}_{m-n}} \right) - s_m t_n \widehat{f}_{m+n} - s_n \overline{t_m} \overline{\widehat{f}_{m+n}} \right)  + O(\epsilon^2).\label{2}
\end{align}
We now have four equations about the Fourier coefficients of $f$: \eqnref{1}, \eqnref{2}, and their conjugate systems. It is then straightforward to find explicit formulas for four unknowns, $\widehat{f}_{m+n}, \overline{\widehat{f}_{m+n}}, \widehat{f}_{m-n}, \overline{\widehat{f}_{m-n}}$, which are exactly \eqnref{f1} and \eqnref{f2}.

\qed

Let us compute first three terms of the Fourier coefficients of $f$. 
From the definition of the equivalent ellipse, we have 
\beq\label{D11}
\Delta_{11}^{(j)} = \mathbb{E}_{11}^{(j)} (\Om,\lambda) - \mathbb{E}_{11}^{(j)} (E,\lambda) = \NN_{11}^{(j)} (\Om,\lambda) - \NN_{11}^{(j)} (E,\lambda) = 0,\quad j=1,2.
\eeq
From \eqnref{Faber:ex1}, we have
$F_2[E](z) = z^2-2e_0 z+e_0^2-2e_1.$
It then follows from Lemma \ref{lemm:pE} and the definition of $e_0$ (see \eqnref{def:ee}) that
\begin{align*}
\Delta_{21}^{(1)} &= \FF_{21}^{(1)}[E] (\Om,\lambda) =  \NN_{21}^{(1)} (\Om,\lambda) -  2e_0\, \NN_{11}^{(1)} (\Om,\lambda),\\
\Delta_{21}^{(2)} &= \FF_{21}^{(2)}[E] (\Om,\lambda) =  \NN_{21}^{(2)} (\Om,\lambda) -  2\overline{e_0}\, \NN_{11}^{(2)} (\Om,\lambda) = 0.
\end{align*}
By substituting $(m,n)=(1,1)$ and $(m,n)=(2,1)$ into \eqnref{f2}, we obtain
\begin{align*}
\widehat{f}_0& =\widehat{f}_{1-1}= O(\epsilon),\\
\widehat{f}_1 &= \widehat{f}_{2-1}=\frac{\gamma_e^{3}}{8\pi \epsilon \lambda}\left(   s_1 \overline{t_2}\, \Delta_{21}^{(1)}+s_2 t_1\, \overline{\Delta_{21}^{(1)}}\right) + O(\epsilon).
\end{align*}
%$$\widehat{f}_1 = \frac{\gamma_e^{3}}{4\pi}\left(\overline{\Delta_{21}^{(1)}} s_2 t_1 + \Delta_{21}^{(1)} s_1 \overline{t_2}\right), \quad 
%\widehat{f}_3 =  \frac{\gamma_e^{3}}{4\pi} \left(\overline{\Delta_{21}^{(1)}} s_1 s_2  + \Delta_{21}^{(1)} \overline{t_1 t_2}\right).$$
We can apply \eqnref{f2} or \eqnref{f1} to get high-order Fourier coefficients. For example, we obtain $\widehat{f}_{2}$ by substituting $(m,n)=(3,1)$ or $(1,1)$ as follows:
\begin{align*}
&\widehat{f}_{2}=\widehat{f}_{3-1} = \frac{\gamma_e^{4}}{12\pi \epsilon \lambda}  \left(s_1 \overline{t_3}\, \Delta_{31}^{(1)}  +s_3 t_1\, \overline{\Delta_{31}^{(1)}}  +  s_1 s_3\, \Delta_{31}^{(2)} + t_1 \overline{t_3}\, \overline{\Delta_{31}^{(2)}}  \right) + O(\epsilon),  \\[1mm]
&\widehat{f}_{2}=\widehat{f}_{1+1} = \frac{\gamma_e^{2}}{2\pi \epsilon} \left(  \overline{t_1}^2 \,\Delta_{11}^{(1)}+ s_1^2 \,  \overline{\Delta_{11}^{(1)}}  +  s_1 \overline{t_1}\, \Delta_{11}^{(2)} +  s_1\overline{t_1}\, \overline{\Delta_{11}^{(2)}}\right)  + O(\epsilon) = O(\epsilon).
\end{align*}
Here, we use equation \eqnref{D11} in the last equality.

\begin{remark}
% Whether to use \eqnref{f2} or \eqnref{f1} for shape recovery always depends on the shape or conductivity. Due to errors as much as $O(\epsilon)$, the values of both formulas are not exactly the same. 
% By applying \eqnref{f1} instead of \eqnref{f2}, more Fourier coefficients can be obtained even with GPTs of relatively low order. For this reason, you can usually use \eqnref{f1}. However, we recommend \eqnref{f2} if the error term for formula \eqnref{f1} is too large to reconstruct the shape.
 
 In numerical examples in section \ref{sec:numerical}, we use \eqnref{f2} to compute $\widehat{f}_{m-n}$ with $n$ fixed to be $1$.

\end{remark}

\section{Numerical results}\label{sec:numerical}
We present numerical results obtained by the two proposed analytic shape recovery methods (Theorem \ref{rs} and Theorem \ref{et}) and compare them with the method proposed in \cite{Ammari:2010:CIP} ({\it i.e.}, Theorem \ref{dd}).
We call the three methods (Theorem \ref{rs}, Theorem \ref{et} and Theorem \ref{dd}) {\it the conformal mapping recovery}, {\it perturbed ellipse recovery}, and {\it perturbed disk recovery}, respectively.

\begin{figure}[htp!]
\centering
\begin{subfigure}[t]{0.24\textwidth}
\centering
\includegraphics[width=\textwidth]{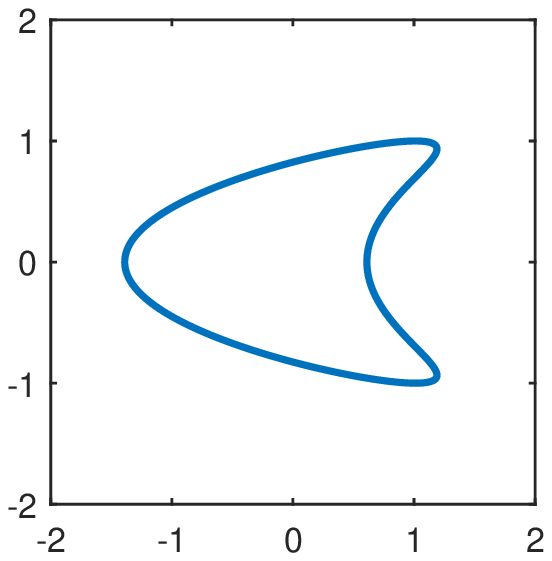}
\end{subfigure}
\begin{subfigure}[t]{0.24\textwidth}
\centering
\includegraphics[width=\textwidth]{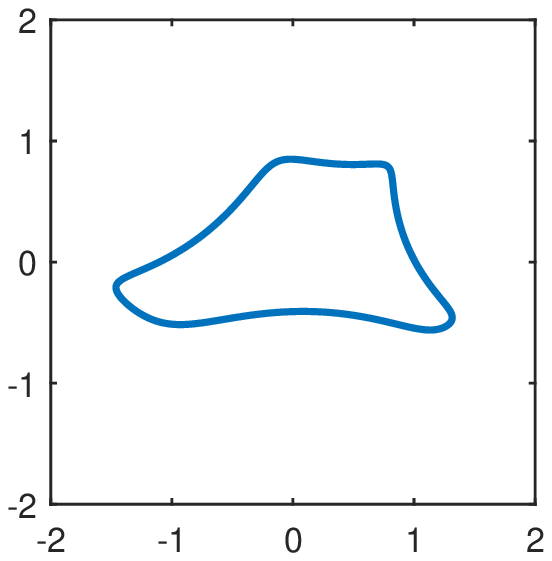}
\end{subfigure}
\begin{subfigure}[t]{0.24\textwidth}
\centering
\includegraphics[width=\textwidth]{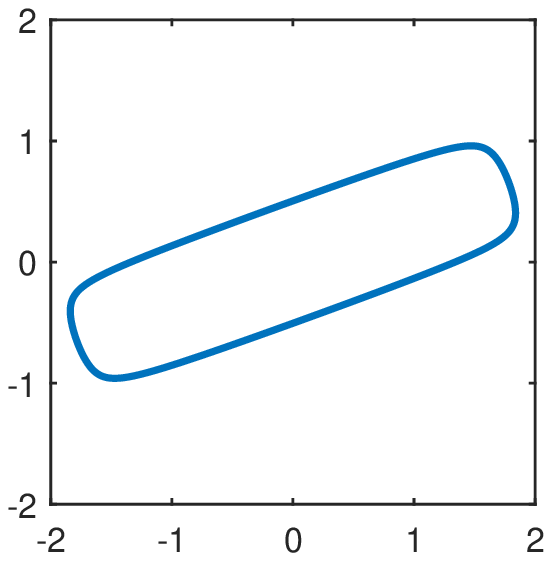}
\end{subfigure}
\begin{subfigure}[t]{0.24\textwidth}
\centering
\includegraphics[width=\textwidth]{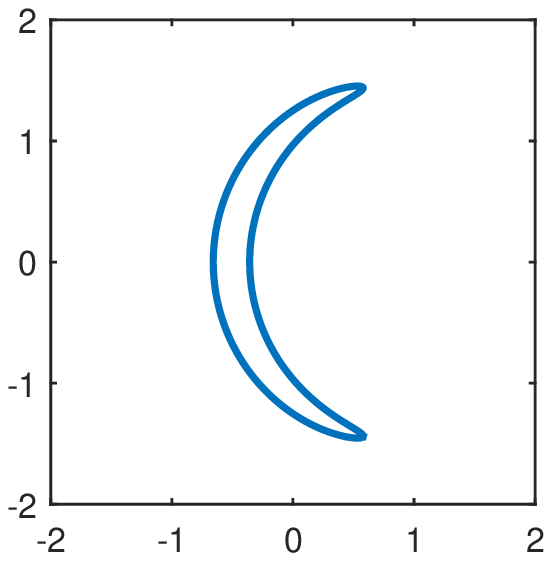}
\end{subfigure}
\caption{Simulation geometry of simply connected inclusions.}
\label{fig}
\end{figure}

We show numerical results of four different shapes; Fig.\;\ref{fig} illustrates the examples. 
All boundaries of the four domains are $\mathcal{C}^2$-curves.
In each simulation, we use GPTs up to some given order, namely $\text{Ord}\in\NN$. In other words,
\beq\label{ordGPTs}
\{\NN_{mn}^{(j)}\}_{j=1,2,\ 1\leq m,n\leq \text{Ord}}.
\eeq
  To acquire the values of GPTs, we use the integral formulation \eqnref{u:int:formulation}. More precisely, we solve \eqnref{eqn:integral} by employing the Nystr\"{o}m discretization method; see \cite[Section 17.1, 17.3]{Ammari:2013:MSM} for numerical codes. We then numerically evaluate the integrals \eqnref{def:GPT1} and \eqnref{def:GPT2}.

For the perturbed ellipse (or disk) recovery, the conductivity $\sigma$ is assumed to be known in each simulation. We compute the coefficients $\widehat{f}_k$ for order up to $k=\text{Ord}-1$ by the formula of 
$\widehat{f}_{m-n}$ with $m=k+1$ and $n=1$ in Theorem \ref{dd} (or Theorem \ref{et}).

For the conformal mapping recovery, we compute the coefficients of the conformal mapping of order up to a given Ord, following the formulas in Theorem \ref{rs}.

In the remaining figures, the gray curve shows the original shape of an example inclusion and the black curve shows the reconstructed shape. 
In each example, the conductivity values is either all smaller than $1$, or all bigger than $1$. 
The conformal mapping recovery shows high accuracy for all examples when the conductivity has either high or low value. The perturbed ellipse (or disk) recovery does not rely much on the value of conductivity. 
The perturbed disk recovery, in which an inclusion is regarded as a perturbation of a disk, 
is not applicable to either an asymmetric or a straight shape (the second and third examples in Fig.\;\ref{fig}).
However, the perturbed ellipse recovery restores successfully the examples of both straight and asymmetric shapes (see Fig.\;\ref{fig:cap_initial}--\ref{fig:straight_initial}). Especially for the straight shape, it shows a very good recovery because the equivalent ellipse reflects the eccentricity of the straight shape; see Fig.\;\ref{fig:straight_initial}. Nevertheless, the perturbed ellipse recovery does not show a good result if the inclusion is heavily concave; see Fig.\;\ref{fig:moon_initial}.

\smallskip
\noindent\textbf{Example 1\,}
In Fig.\;\ref{fig:kite_initial}, we consider the inclusion whose boundary is given by a parametrization: 
$$z(\theta) = 0.311 + \cos\theta - 0.7\cos2\theta + i \sin\theta\quad\mbox{with }\theta\in[0,2\pi).$$

The perturbed disk recovery (first column) and the perturbed ellipse recovery (second column) show similar performance since this example has an equivalent ellipse that is quite similar to a disk. Fig.\;\ref{fig:kite_GPT} shows the results obtained by using GPTs of various orders. 

\smallskip
\smallskip

\noindent\textbf{Example 2\,} In Fig.\;\ref{noise}, we consider an asymmetric-shaped inclusion given by the parametrization:
$$
\Psi(w) = e^{\frac{\pi i}{5}}\left( w + \tfrac{1-2i}{7}w^{-1} + \tfrac{i-1}{6}w^{-2} + \tfrac{i}{20}w^{-3} + \tfrac{1}{20}w^{-4} + \tfrac{i}{20}w^{-5} + \tfrac{i}{50}w^{-6} \right)
$$
with $|w|=1$.
 
We show the simulation results obtained by using noisy information of GPTs with $\mbox{SNR}=\infty$ (no noise), $5$ and $2$, where the signal-to-noise ratio (SNR) is defined to be
$$
\text{SNR} = -10 \log_{10} (Var) \quad \mbox{or} \quad Var = 10^{-\text{SNR}/10}.
$$
Here, $Var$ is the variance of additive white noise compared to the signal power. More precisely, we generate the noise by using the Gaussian distribution as follows:
$$\text{GPT}_\text{noise} = \text{GPT}_\text{origial} + \mathcal{N}(0, Var)$$
with the normal distribution $\mathcal{N}$.
All three shape recovery methods are fairly stable with noise. 

The perturbed ellipse recovery shows much better restoration than the perturbed disk recovery. The conformal mapping recovery shows good results when the conductivity approaches zero or infinity, as expected.

\smallskip
\smallskip

\noindent\textbf{Example 3\,} In Fig.\;\ref{fig:straight_initial}, we consider a straight shape whose boundary is the parametrized curve $z(t) = e^{\frac{\pi i}{9}}z_0(t)$, where $z_0(t)$ is given by
\begin{align*}
z_0(t)
&= 
\sqrt{2 + u^2 - v^2 + 2\sqrt{2}u} - \sqrt{2 + u^2 - v^2 - 2\sqrt{2}u}\\
&\quad + \frac{i}{4}\left(\sqrt{2 - u^2 + v^2 + 2\sqrt{2}v}- \sqrt{2 - u^2 + v^2 - 2\sqrt{2}v}\right)
\end{align*}
with $u=\cos t$, $v=\sin t$,  $t\in[0,2\pi)$.
For this example shape, the associated equivalent ellipse has a small aspect ratio and, thus, the perturbed disk recovery does not show good recovery. However, the perturbed ellipse recovery works well for all conductivity values.

\smallskip
\smallskip

\noindent\textbf{Example 4\,}
In this example (Fig.\;\ref{fig:moon_initial}), we consider a crescent shape given by
$$
z(t) = \frac{5z_0(t) - 20i}{2z_0(t) + 40i},
$$
where $z_0(t)$ is given by
\begin{align*}
z_0(t)
&= 
15\left(\sqrt{2 + u^2 - v^2 + 2\sqrt{2}u} - \sqrt{2 + u^2 - v^2 - 2\sqrt{2}u}\right)\\
&\quad + i \left(\sqrt{2 - u^2 + v^2 + 2\sqrt{2}v}- \sqrt{2 - u^2 + v^2 - 2\sqrt{2}v}\right)
\end{align*}
with $u=\cos t$, $v=\sin t$,  $t\in[0,2\pi)$.
While the perturbed ellipse (or disk) recovery shows worse results than in previous examples, the conformal mapping recovery shows a good reconstruction for the extreme conductivity case.

\section{Conclusion}
This study presents two analytical methods of recovering a simply connected conductivity inclusion in two dimensions from multistatic measurements, based on complex analysis and the concepts of the generalized polarization tensors (GPTs) and the Faber polynomial polarization tensors (FPTs). First, we provide an exact, simple expression of the conformal mapping associated with the inclusion in terms of GPTs given that the inclusion is either insulating or perfectly conducting. This expression allows us to accurately recover the shape of an inclusion with near-extreme conductivity. 
Secondly, we derive an asymptotic formula to approximate an inclusion with arbitrary conductivity by considering the inclusion as a perturbation of its equivalent ellipse. This formula provides us a good recovery for inclusions of general shapes, including straight or asymmetric shapes.

\begin{figure}[t!]
\begin{subfigure}{\linewidth}
\raggedright\qquad
\captionsetup{justification=centering}
\begin{minipage}{0.11\linewidth}
\subcaption*{}
\end{minipage}
\begin{minipage}{0.25\linewidth}
\subcaption*{Perturbed disk\\ recovery}
\end{minipage}\,
\begin{minipage}{0.25\linewidth}
\subcaption*{Perturbed ellipse\\ recovery}
\end{minipage}\,
\begin{minipage}{0.25\linewidth}
\subcaption*{Conformal mapping\\ recovery}
\end{minipage}\,
\end{subfigure}\\
\hskip -1.5cm
\begin{subfigure}{\linewidth}
\raggedright\qquad
\begin{minipage}{0.11\linewidth}
\subcaption*{$\sigma=5$}
\subcaption*{$\text{Ord} = 6$}
\end{minipage}\,
\begin{minipage}{0.25\linewidth}
\includegraphics[width=.95\linewidth,trim=70 30 50 15, clip]{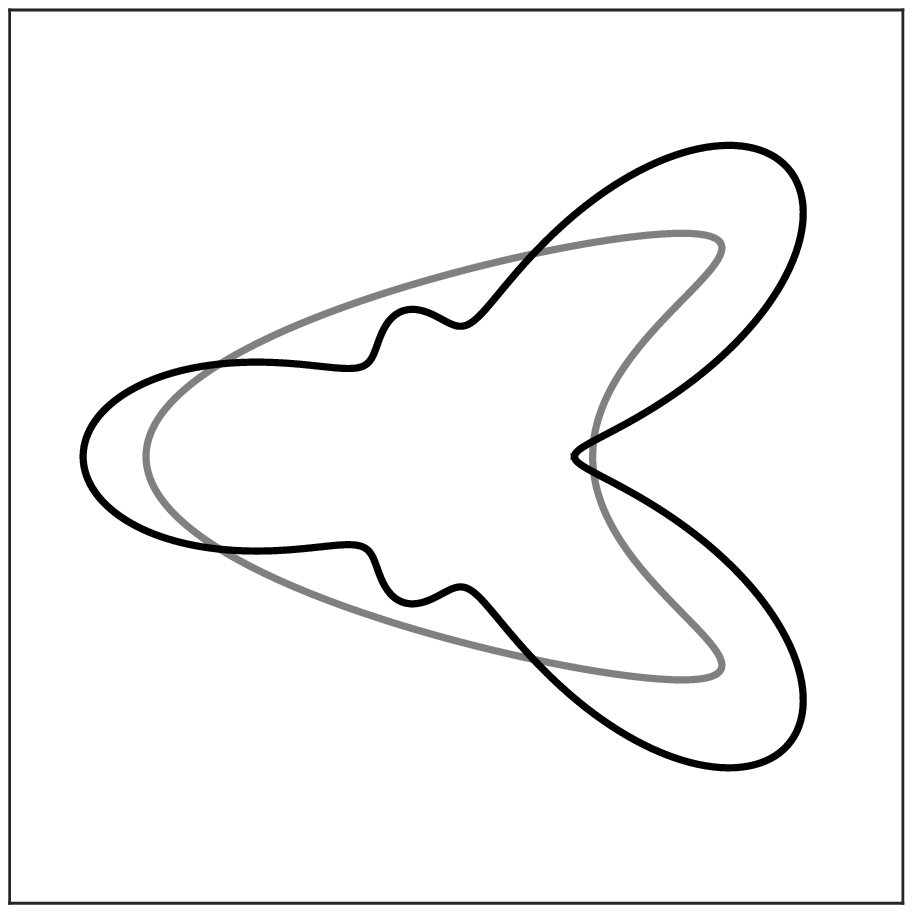}
\end{minipage}\,
\begin{minipage}{0.25\linewidth}
\includegraphics[width=.95\linewidth,trim=70 30 50 15, clip]{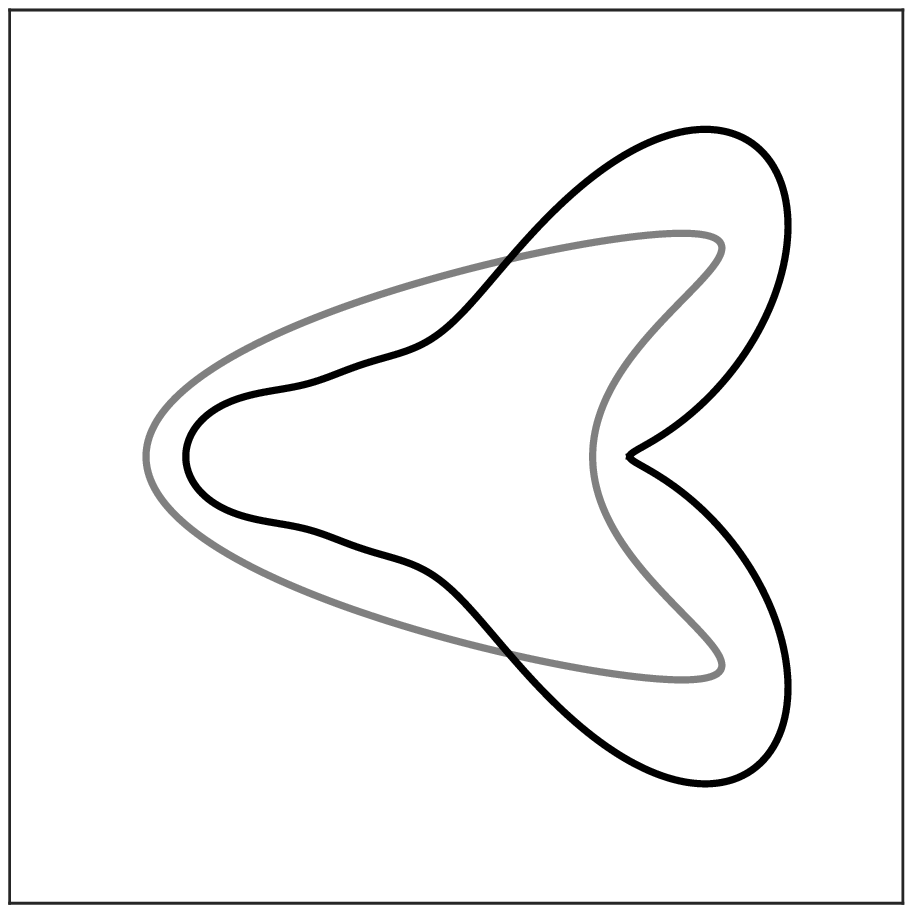}
\end{minipage}\,
\begin{minipage}{0.25\linewidth}
\includegraphics[width=.95\linewidth,trim=70 30 50 15, clip]{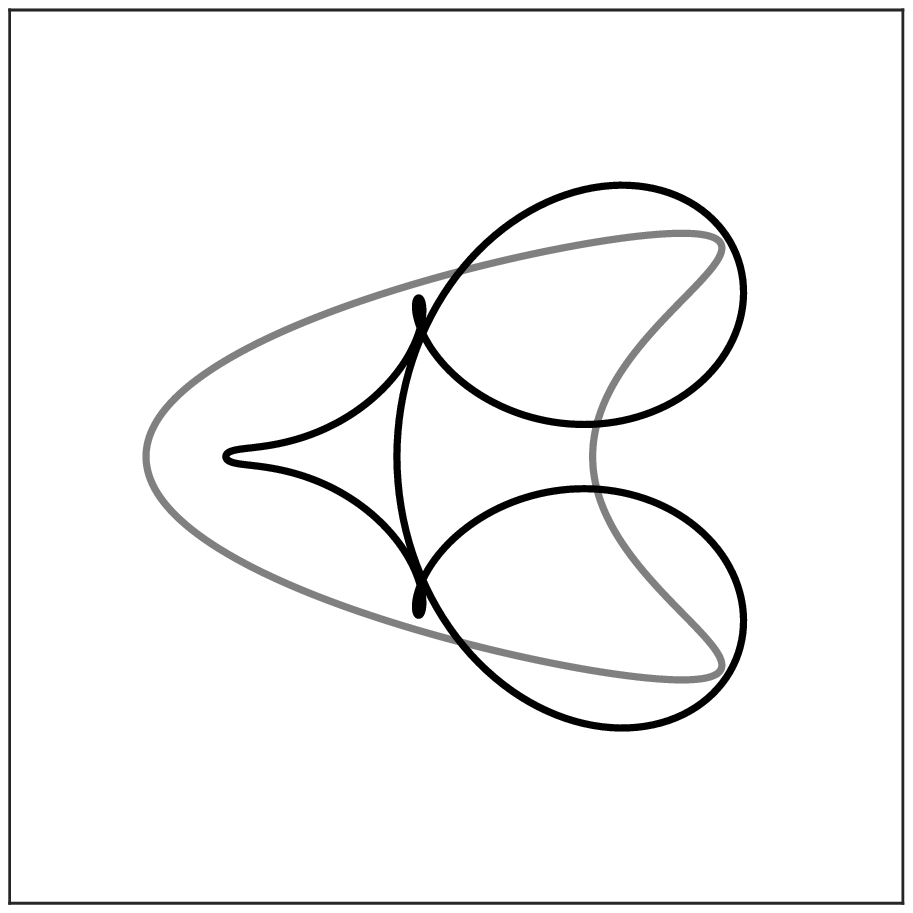}
\end{minipage}\,
\end{subfigure}\\
\hskip -1.5cm
\begin{subfigure}{\linewidth}
\raggedright\qquad
\begin{minipage}{0.11\linewidth}
\subcaption*{$\sigma=50$}
\subcaption*{$\text{Ord} = 6$}
\end{minipage}\,
\begin{minipage}{0.25\linewidth}
\includegraphics[width=.95\linewidth,trim=70 30 50 15, clip]{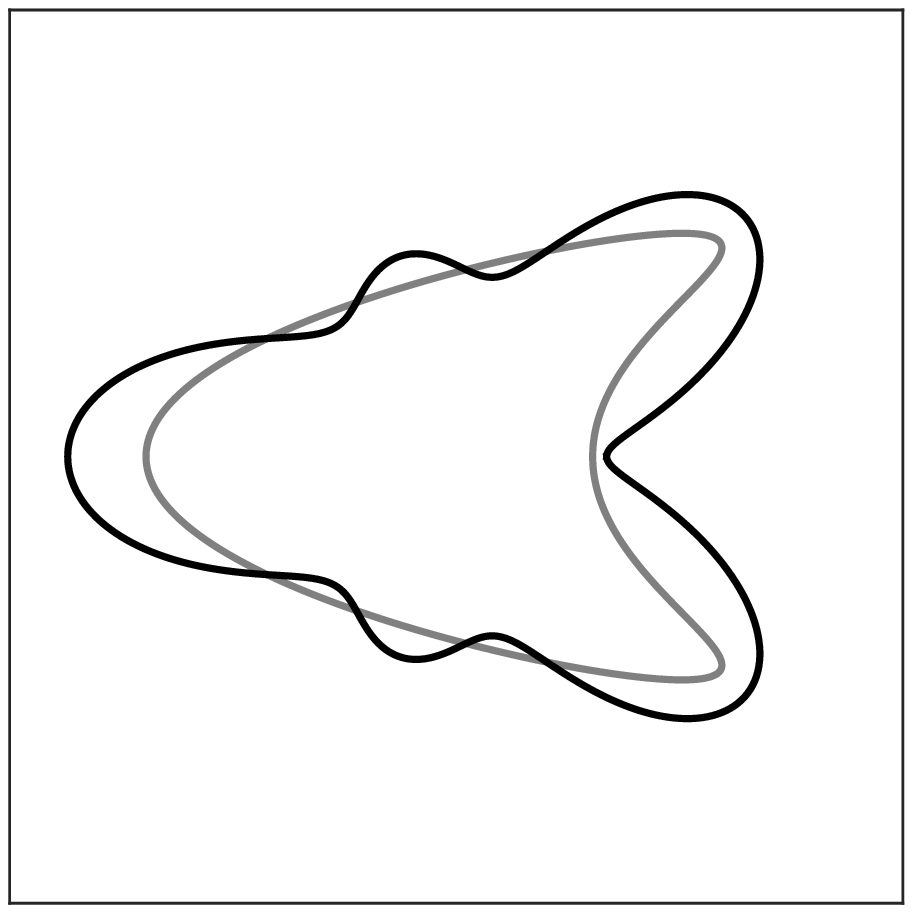}
\end{minipage}\,
\begin{minipage}{0.25\linewidth}
\includegraphics[width=.95\linewidth,trim=70 30 50 15, clip]{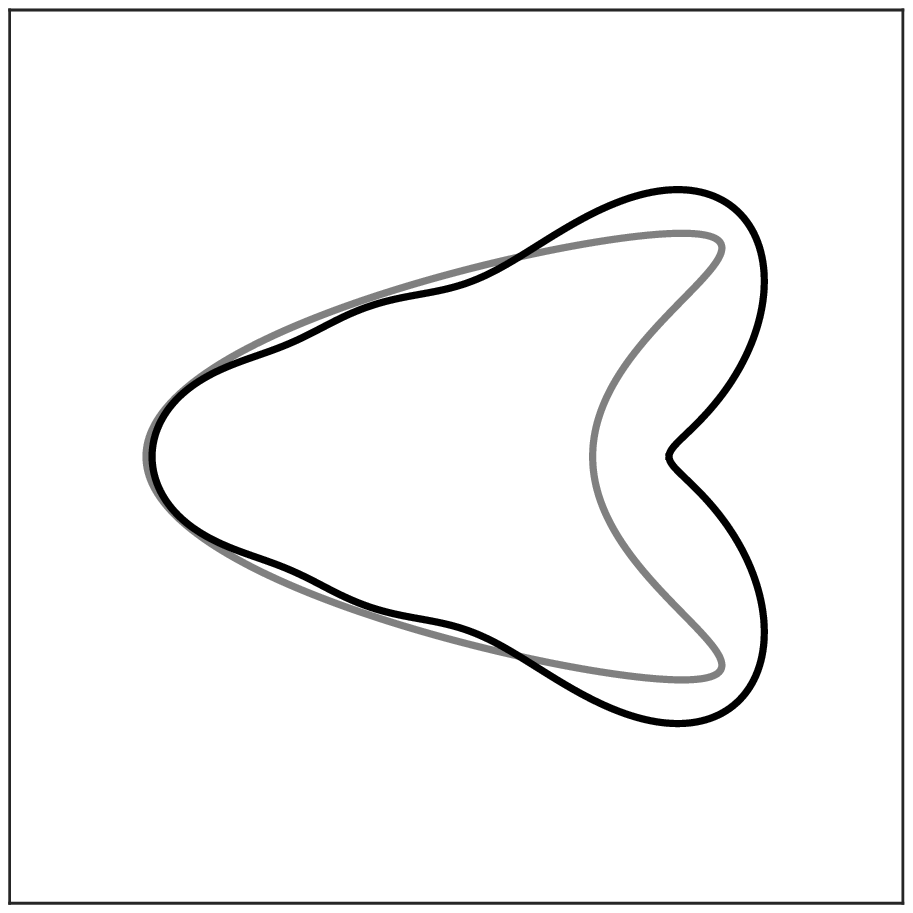}
\end{minipage}\,
\begin{minipage}{0.25\linewidth}
\includegraphics[width=.95\linewidth,trim=70 30 50 15, clip]{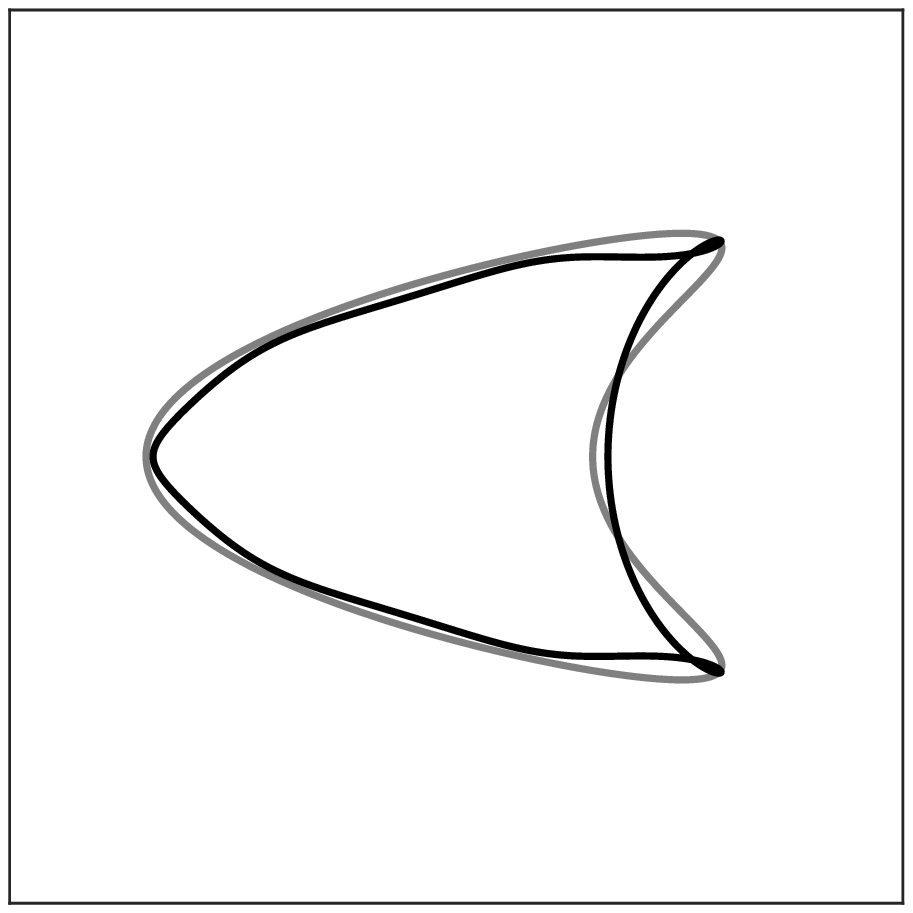}
\end{minipage}\,
\end{subfigure}\\
\hskip -1.5cm
\begin{subfigure}{\linewidth}
\raggedright\qquad
\begin{minipage}{0.11\linewidth}
\subcaption*{$\sigma=\infty$}
\subcaption*{$\text{Ord} = 6$}
\end{minipage}\,
\begin{minipage}{0.25\linewidth}
\includegraphics[width=.95\linewidth,trim=70 30 50 15, clip]{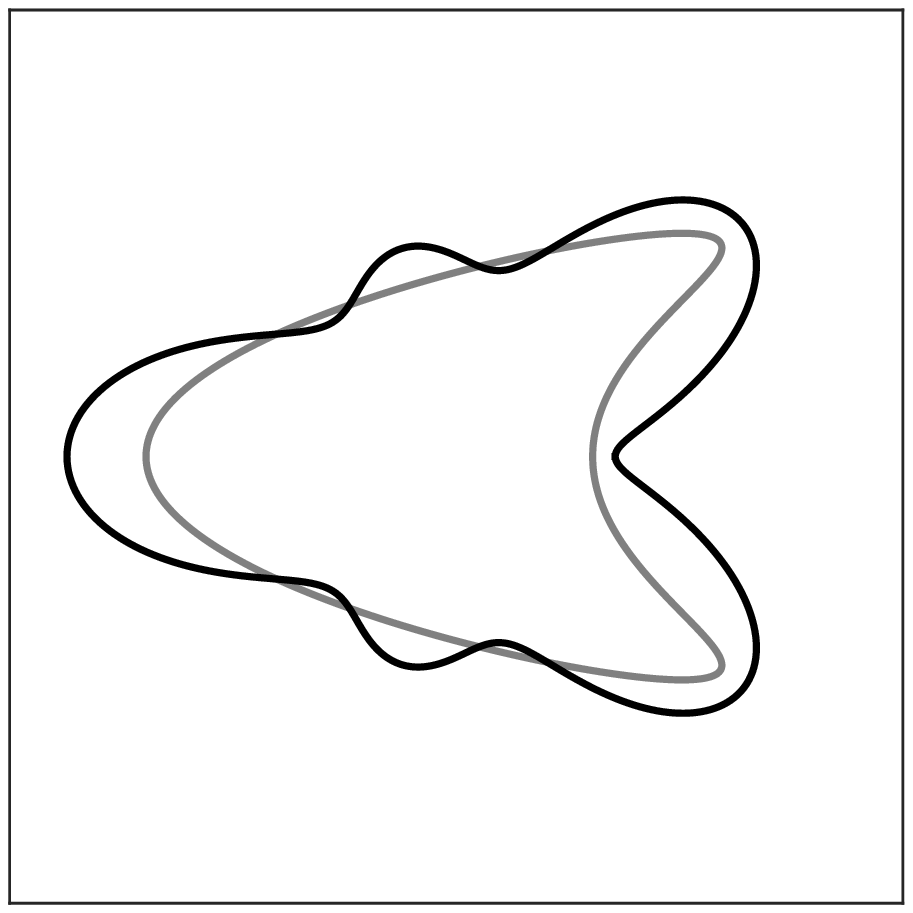}
\end{minipage}\,
\begin{minipage}{0.25\linewidth}
\includegraphics[width=.95\linewidth,trim=70 30 50 15, clip]{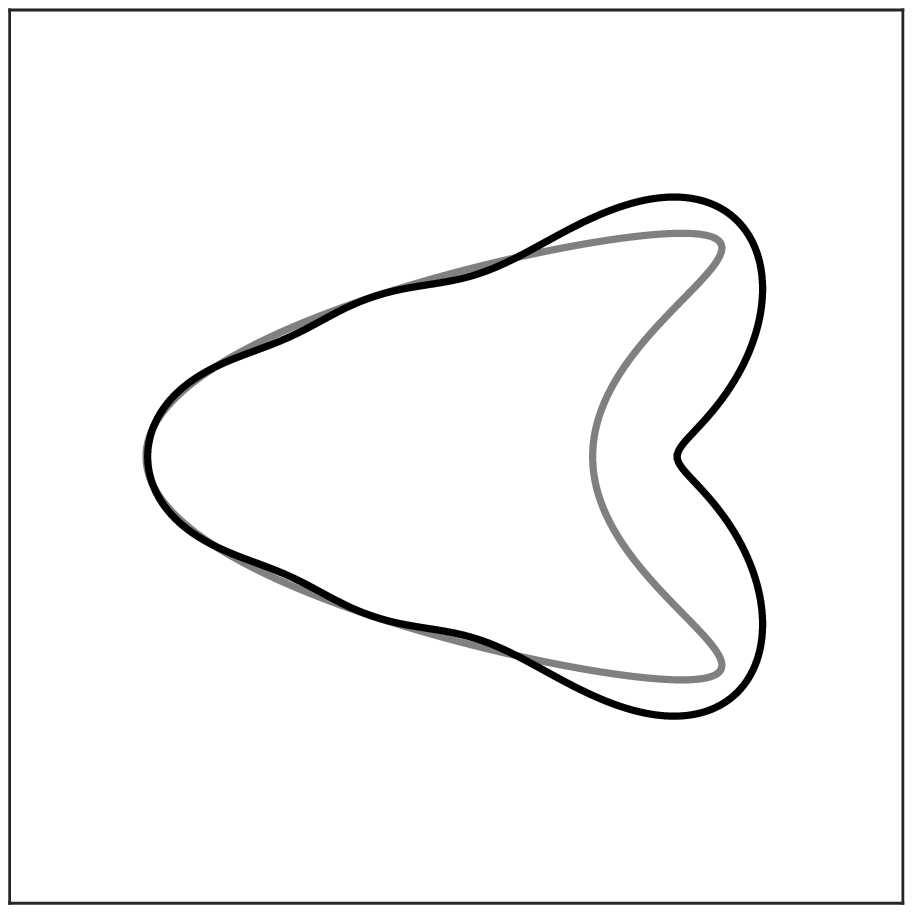}
\end{minipage}\,
\begin{minipage}{0.25\linewidth}
\includegraphics[width=.95\linewidth,trim=70 30 50 15, clip]{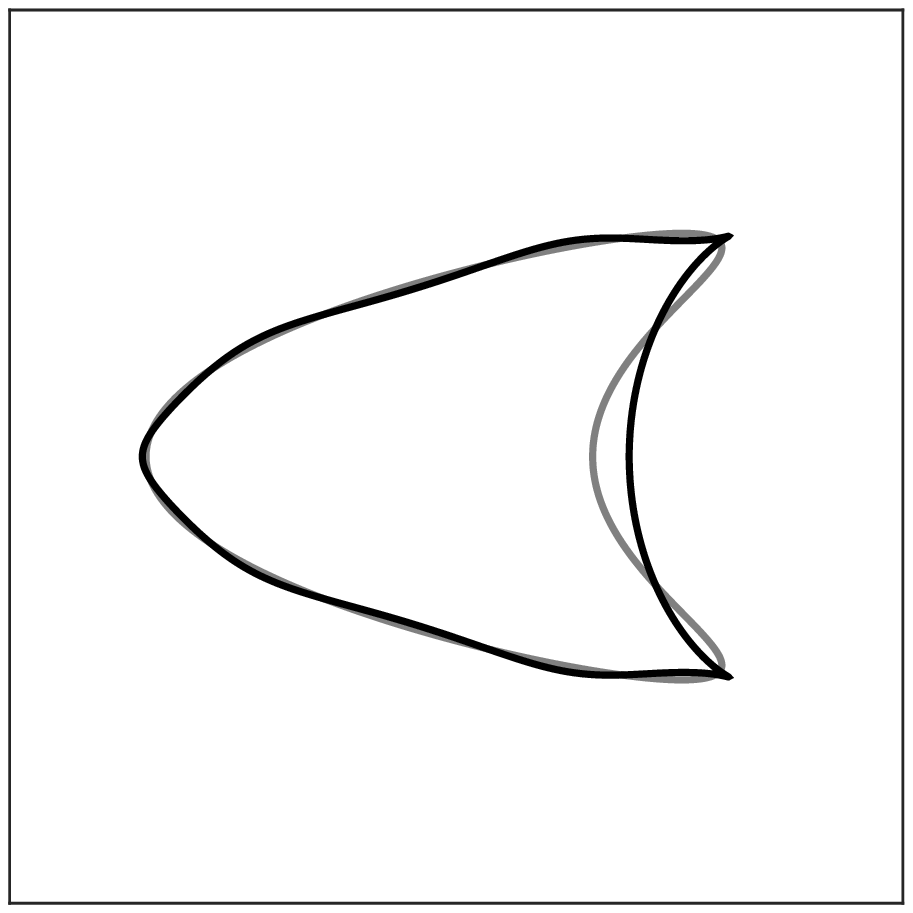}
\end{minipage}\,
\end{subfigure}
\caption{Recovery of a kite-shaped inclusion with various conductivities. 
For the perturbed disk (or ellipse) recovery, $\sigma$ is assumed to be known. The values of GPTs up to $\text{Ord}=6$ are used; see Fig.\;\ref{fig:kite_GPT}  for the reconstruction with $\text{Ord}=2,4$.}
\label{fig:kite_initial}
\end{figure}
\begin{figure}[H]
\begin{subfigure}{\linewidth}
\raggedright\qquad
\captionsetup{justification=centering}
\begin{minipage}{0.11\linewidth}
\subcaption*{}
\end{minipage}\,
\begin{minipage}{0.25\linewidth}
\subcaption*{Perturbed disk\\ recovery}
\end{minipage}\,
\begin{minipage}{0.25\linewidth}
\subcaption*{Perturbed ellipse\\ recovery}
\end{minipage}\,
\begin{minipage}{0.25\linewidth}
\subcaption*{Conformal mapping\\ recovery}
\end{minipage}\,
\end{subfigure}\\
\hskip -1.5cm
\begin{subfigure}{\linewidth}
\raggedright\qquad
\begin{minipage}{0.11\linewidth}
\subcaption*{$\sigma=50$}
\subcaption*{$\text{Ord}= 2$}
\end{minipage}\,
\begin{minipage}{0.25\linewidth}
\includegraphics[width=.95\linewidth,trim=70 30 50 15, clip]{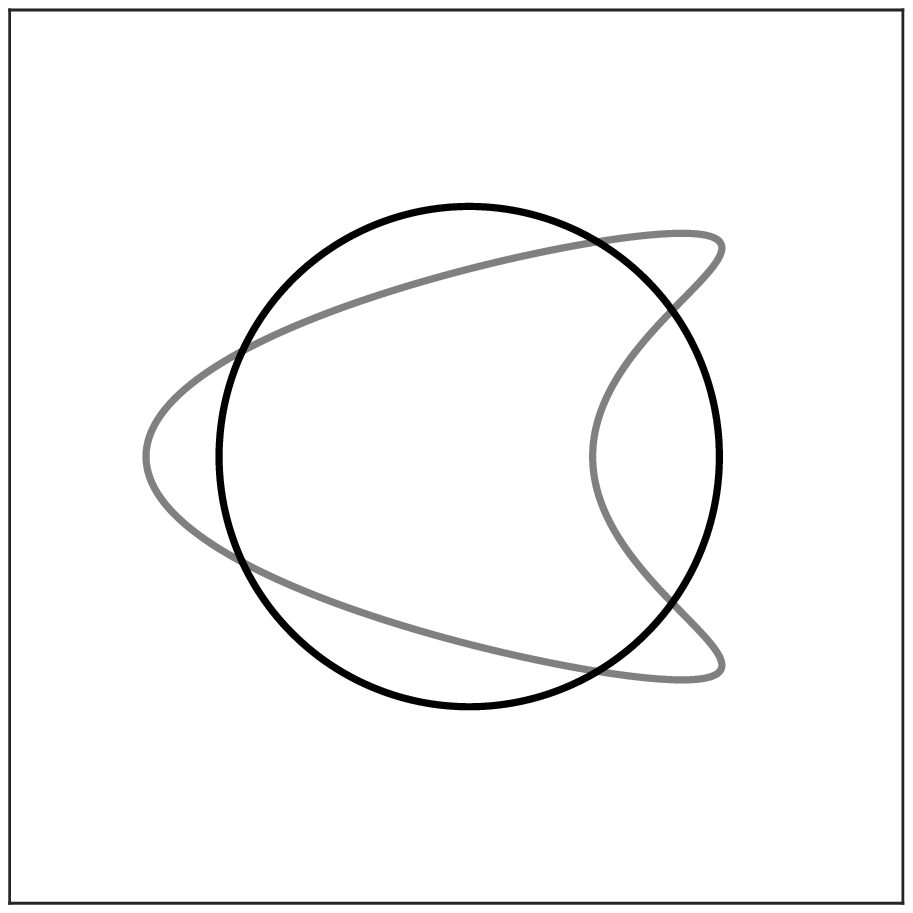}
\end{minipage}\,
\begin{minipage}{0.25\linewidth}
\includegraphics[width=.95\linewidth,trim=70 30 50 15, clip]{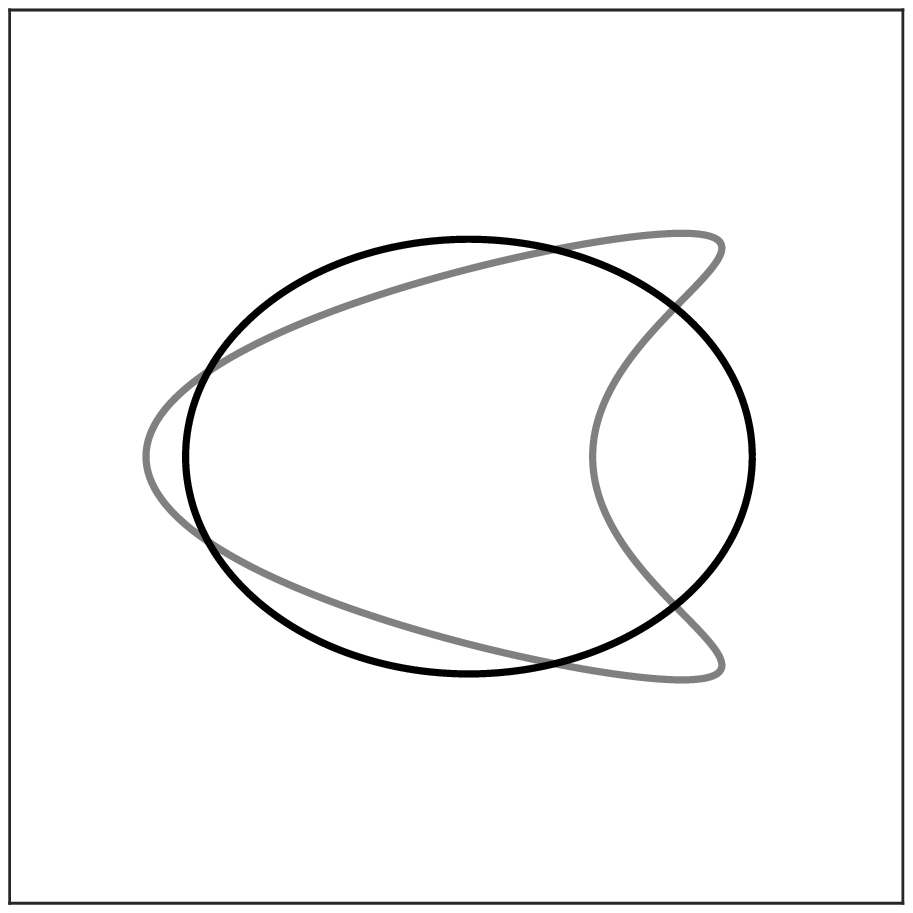}
\end{minipage}\,
\begin{minipage}{0.25\linewidth}
\includegraphics[width=.95\linewidth,trim=70 30 50 15, clip]{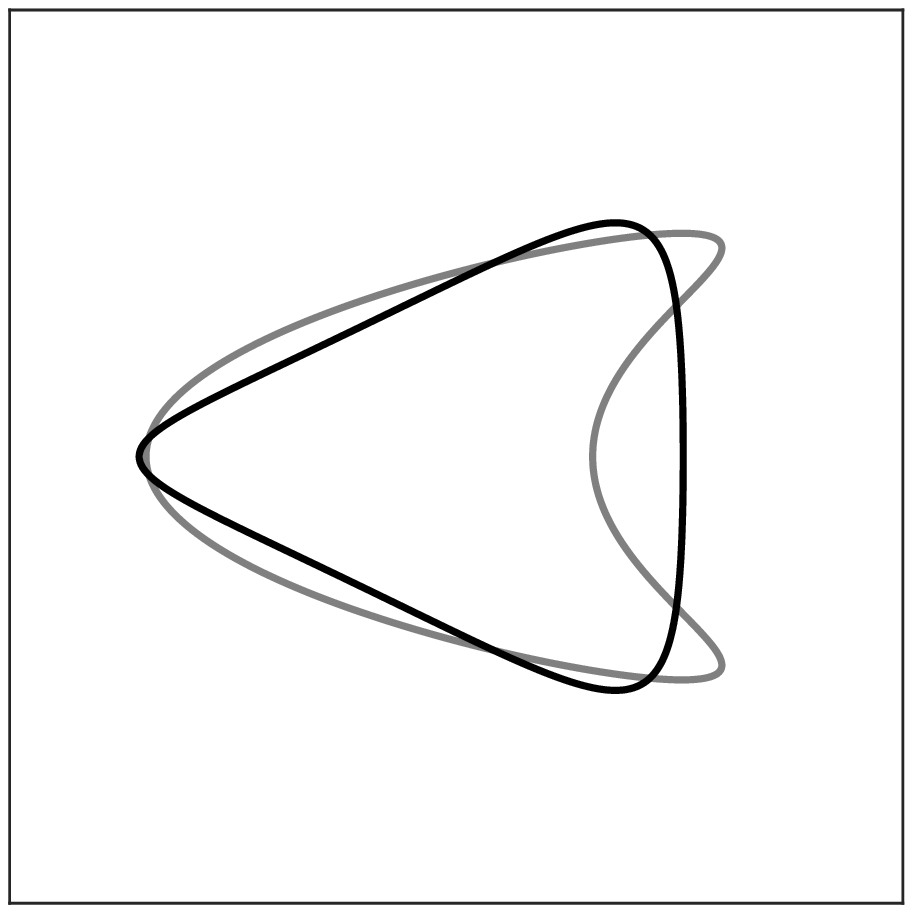}
\end{minipage}\,
\end{subfigure}\\
\hskip -1.5cm
\begin{subfigure}{\linewidth}
\raggedright\qquad
\begin{minipage}{0.11\linewidth}
\subcaption*{$\sigma=50$}
\subcaption*{$\text{Ord}= 4$}
\end{minipage}\,
\begin{minipage}{0.25\linewidth}
\includegraphics[width=.95\linewidth,trim=70 30 50 15, clip]{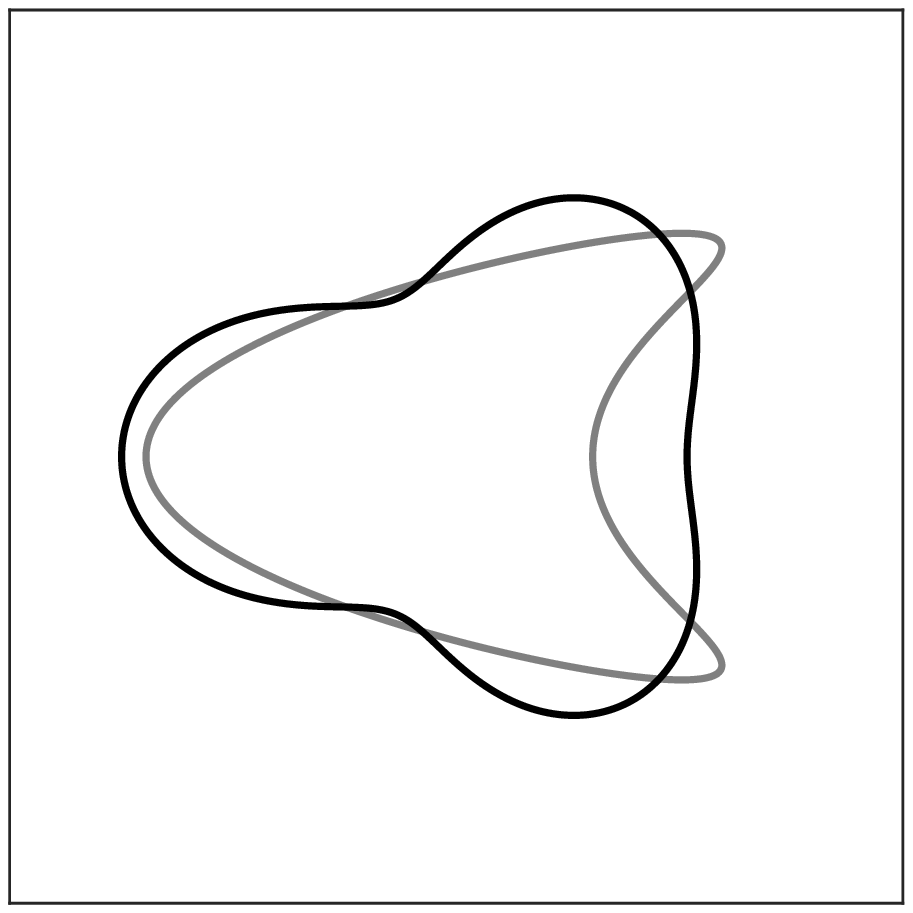}
\end{minipage}\,
\begin{minipage}{0.25\linewidth}
\includegraphics[width=.95\linewidth,trim=70 30 50 15, clip]{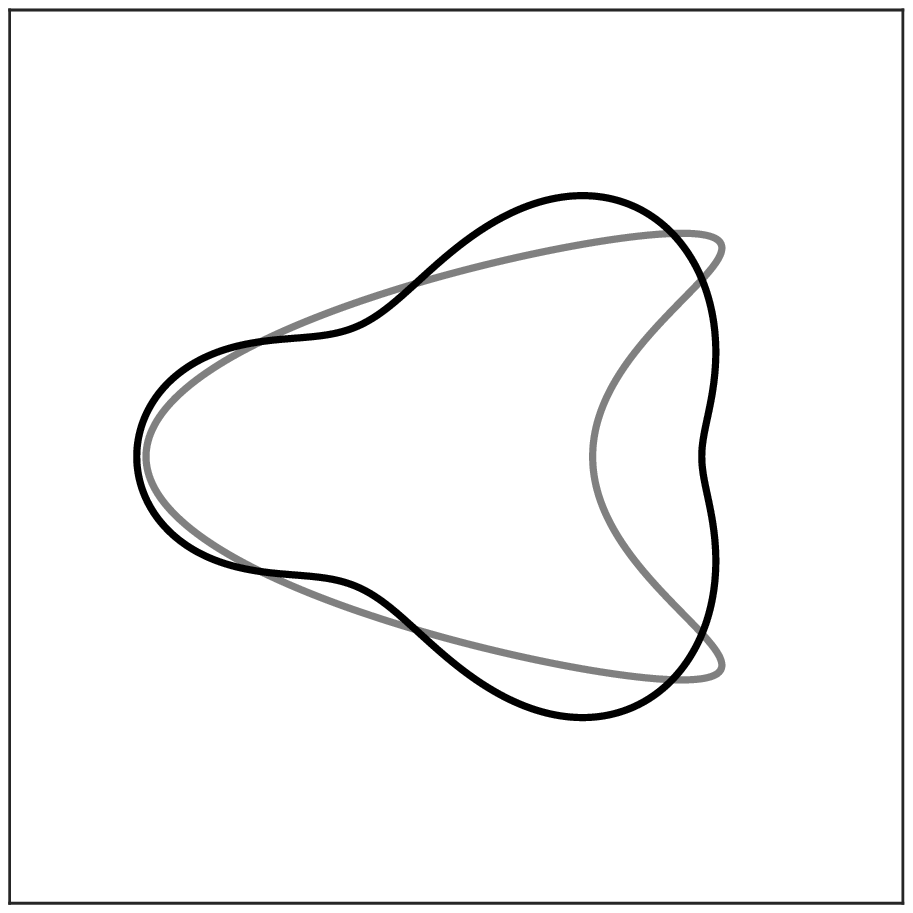}
\end{minipage}\,
\begin{minipage}{0.25\linewidth}
\includegraphics[width=.95\linewidth,trim=70 30 50 15, clip]{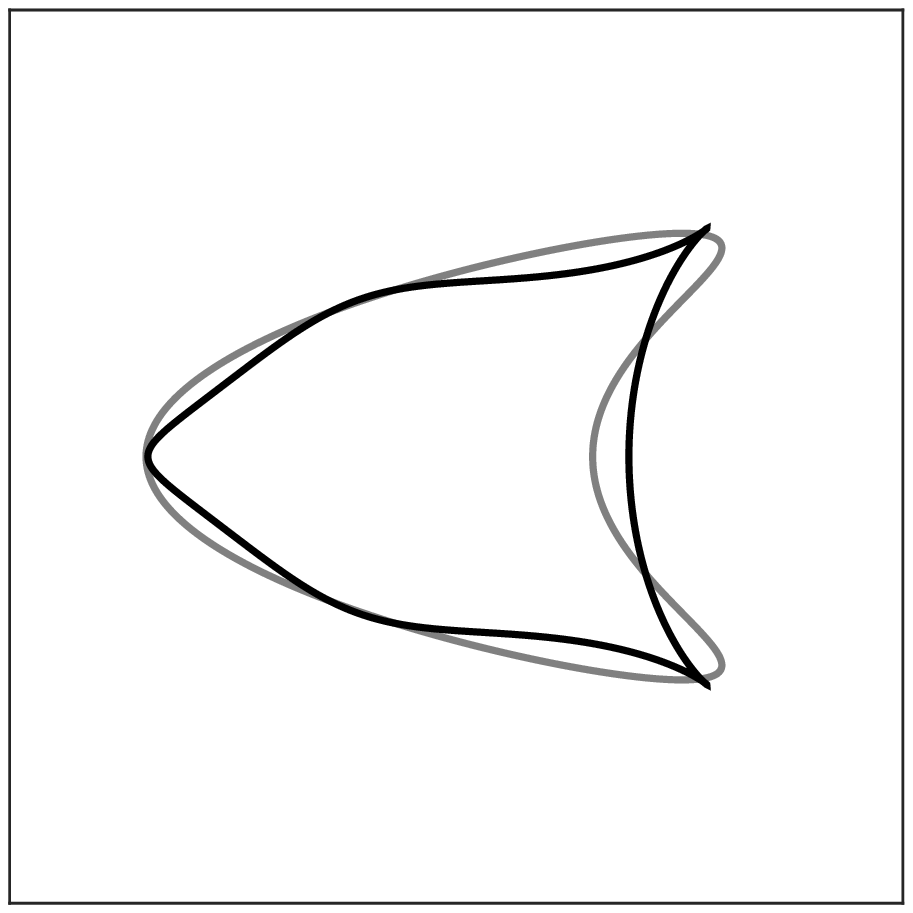}
\end{minipage}\,
\end{subfigure}
\caption{A kite-shaped inclusion with conductivity $\sigma=50$.
GPTs up to $\text{Ord}=2, 4$ are used. 
All three shape recovery methods show better results as Ord increases.
}
\label{fig:kite_GPT}
\end{figure}
\begin{figure}[t!]
\begin{subfigure}{\linewidth}
\raggedright\qquad
\captionsetup{justification=centering}
\begin{minipage}{0.11\linewidth}
\subcaption*{}
\end{minipage}\,
\begin{minipage}{0.25\linewidth}
\subcaption*{Perturbed disk\\ recovery}
\end{minipage}\,
\begin{minipage}{0.25\linewidth}
\subcaption*{Perturbed ellipse\\ recovery}
\end{minipage}\,
\begin{minipage}{0.25\linewidth}
\subcaption*{Conformal mapping\\ recovery}
\end{minipage}\,
\end{subfigure}\\
\hskip -1.5cm
\begin{subfigure}{\linewidth}
\raggedright\qquad
\begin{minipage}{0.11\linewidth}
\subcaption*{$\sigma=1/5$}
\subcaption*{SNR = $\infty$}
\end{minipage}\,
\begin{minipage}{0.25\linewidth}
\includegraphics[width=.95\linewidth,trim=70 30 50 15, clip]{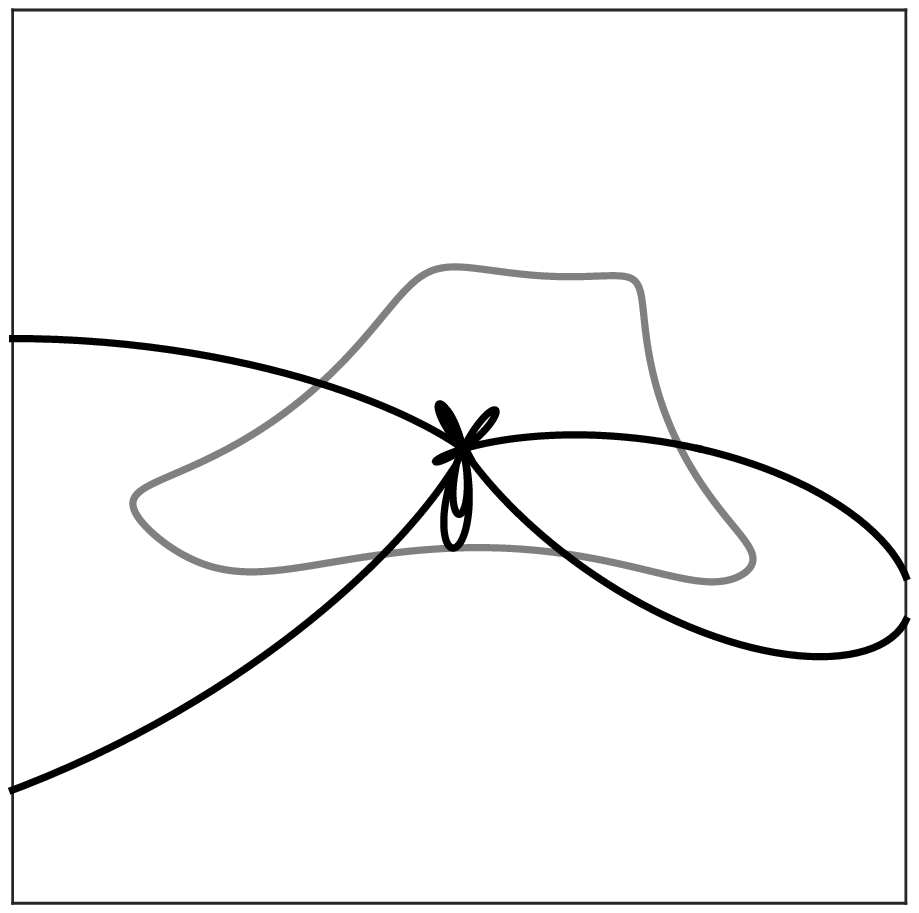}
\end{minipage}\,
\begin{minipage}{0.25\linewidth}
\includegraphics[width=.95\linewidth,trim=70 30 50 15, clip]{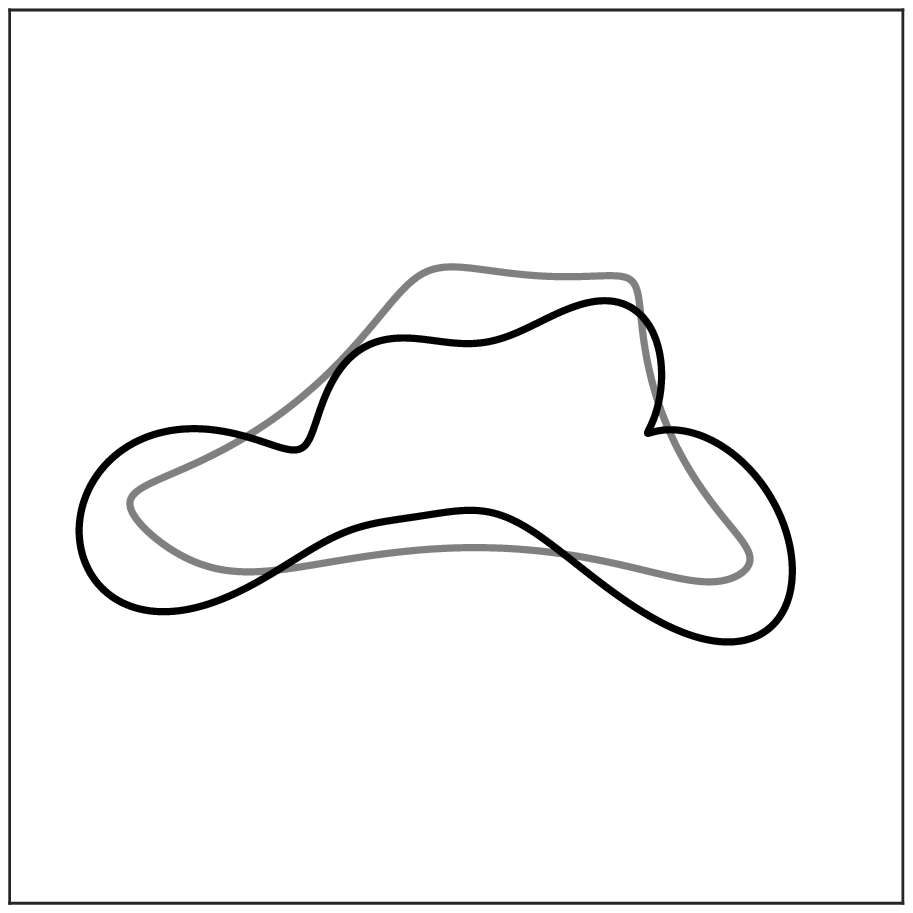}
\end{minipage}\,
\begin{minipage}{0.25\linewidth}
\includegraphics[width=.95\linewidth,trim=70 30 50 15, clip]{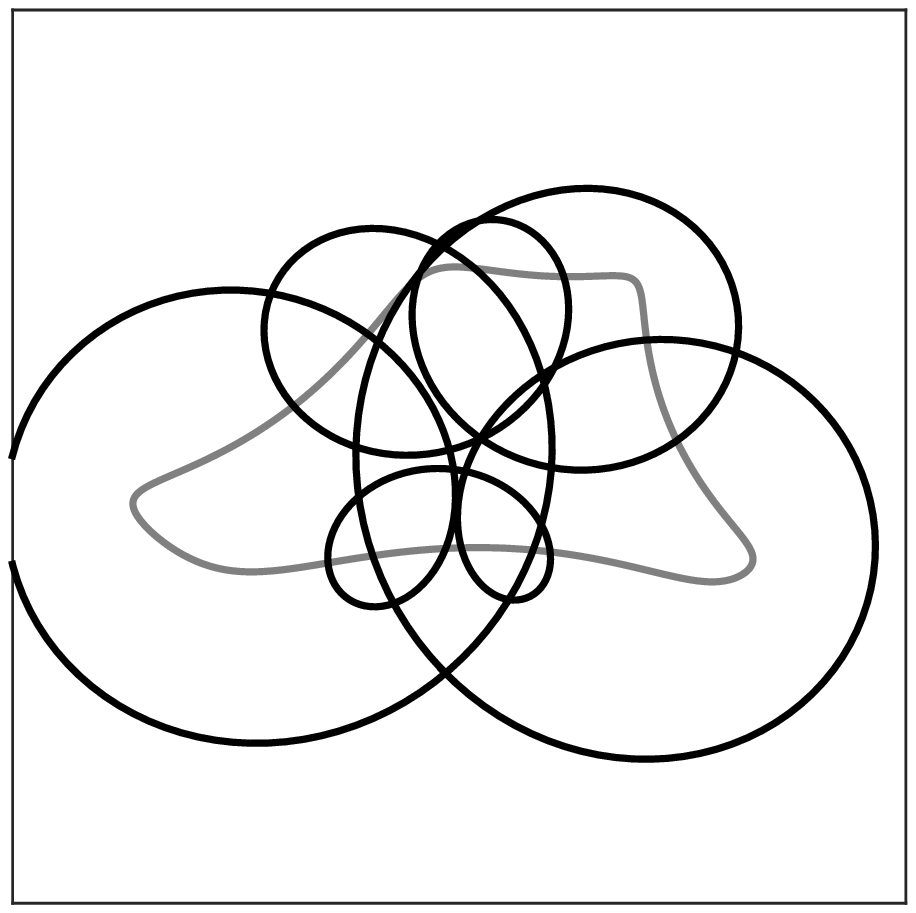}
\end{minipage}\,
\end{subfigure}\\
\hskip -1.5cm
\begin{subfigure}{\linewidth}
\raggedright\qquad
\begin{minipage}{0.11\linewidth}
\subcaption*{$\sigma=1/50$}
\subcaption*{SNR = $\infty$}
\end{minipage}\,
\begin{minipage}{0.25\linewidth}
\includegraphics[width=.95\linewidth,trim=70 30 50 15, clip]{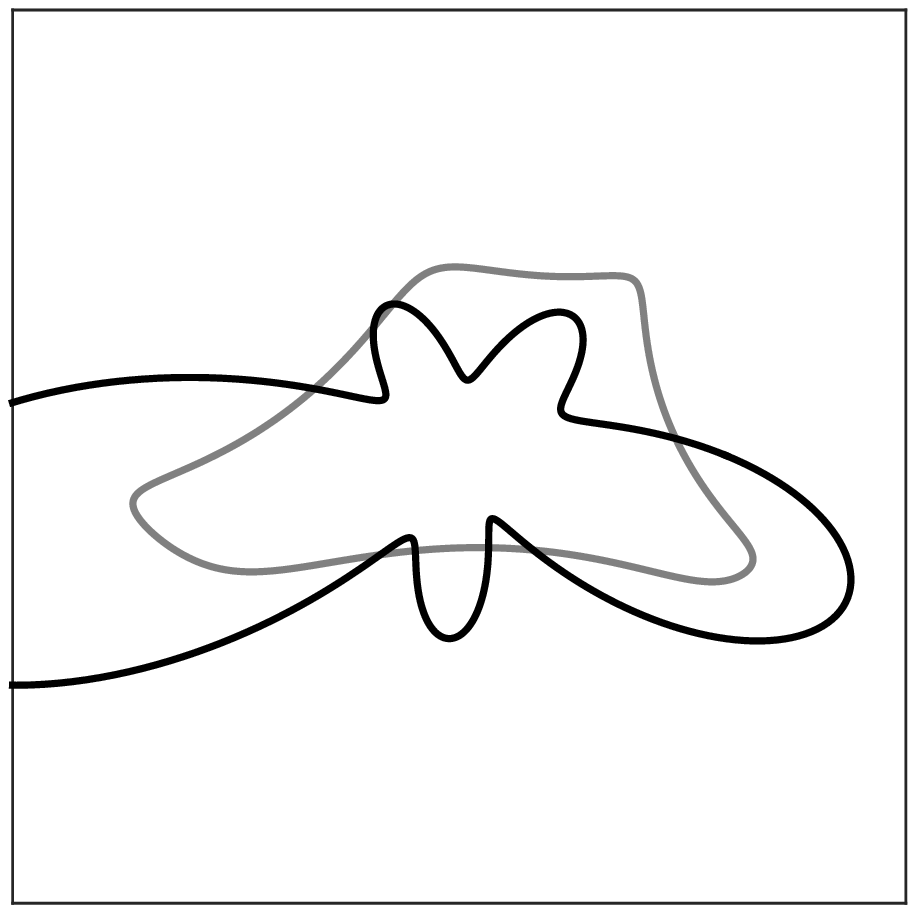}
\end{minipage}\,
\begin{minipage}{0.25\linewidth}
\includegraphics[width=.95\linewidth,trim=70 30 50 15, clip]{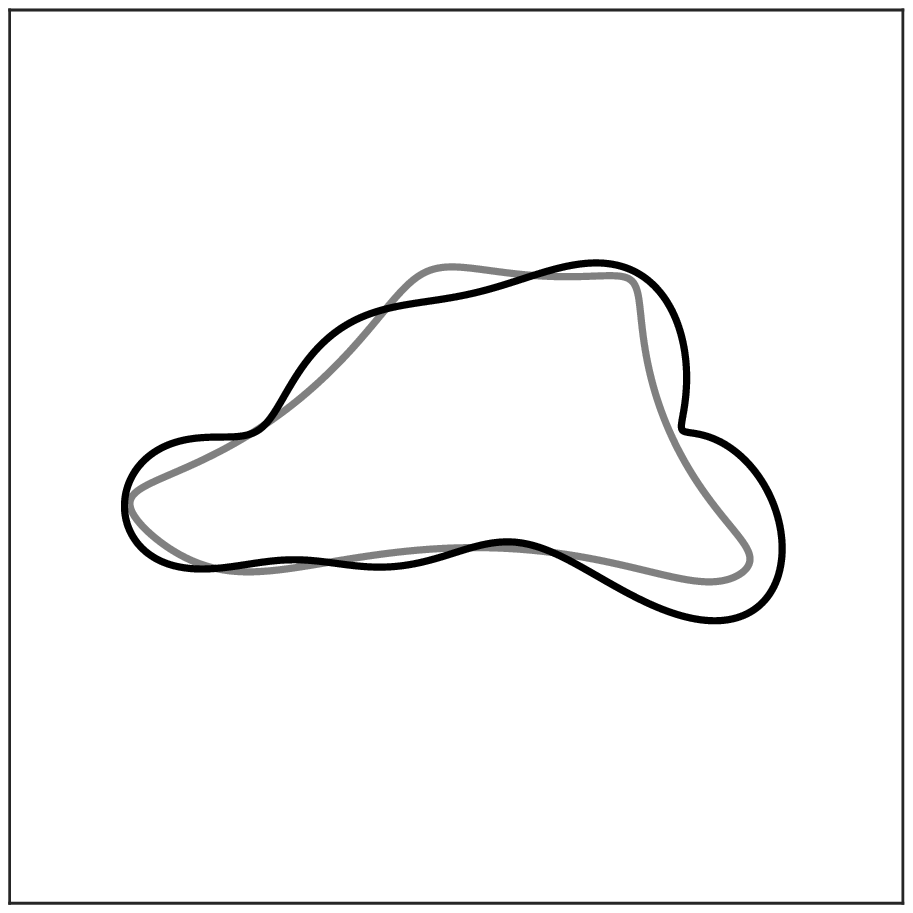}
\end{minipage}\,
\begin{minipage}{0.25\linewidth}
\includegraphics[width=.95\linewidth,trim=70 30 50 15, clip]{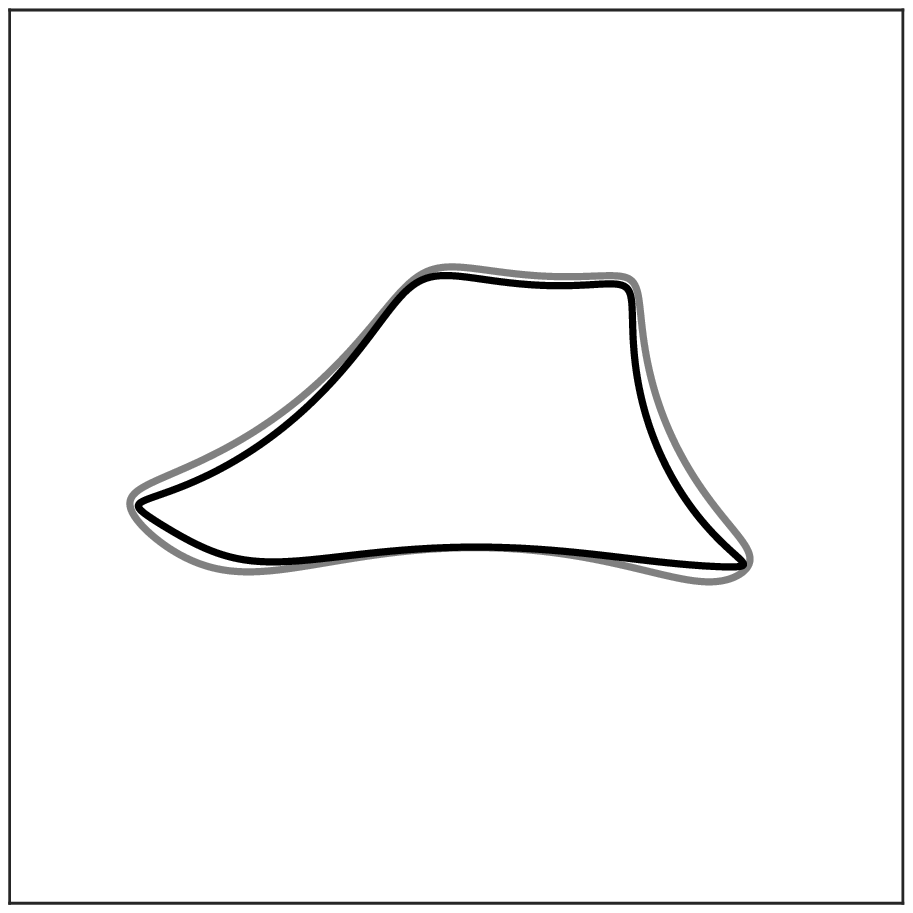}
\end{minipage}\,
\end{subfigure}\\
\hskip -1.5cm
\begin{subfigure}{\linewidth}
\raggedright\qquad
\begin{minipage}{0.11\linewidth}
\subcaption*{$\sigma=0$}
\subcaption*{SNR = $\infty$}
\end{minipage}\,
\begin{minipage}{0.25\linewidth}
\includegraphics[width=.95\linewidth,trim=70 30 50 15, clip]{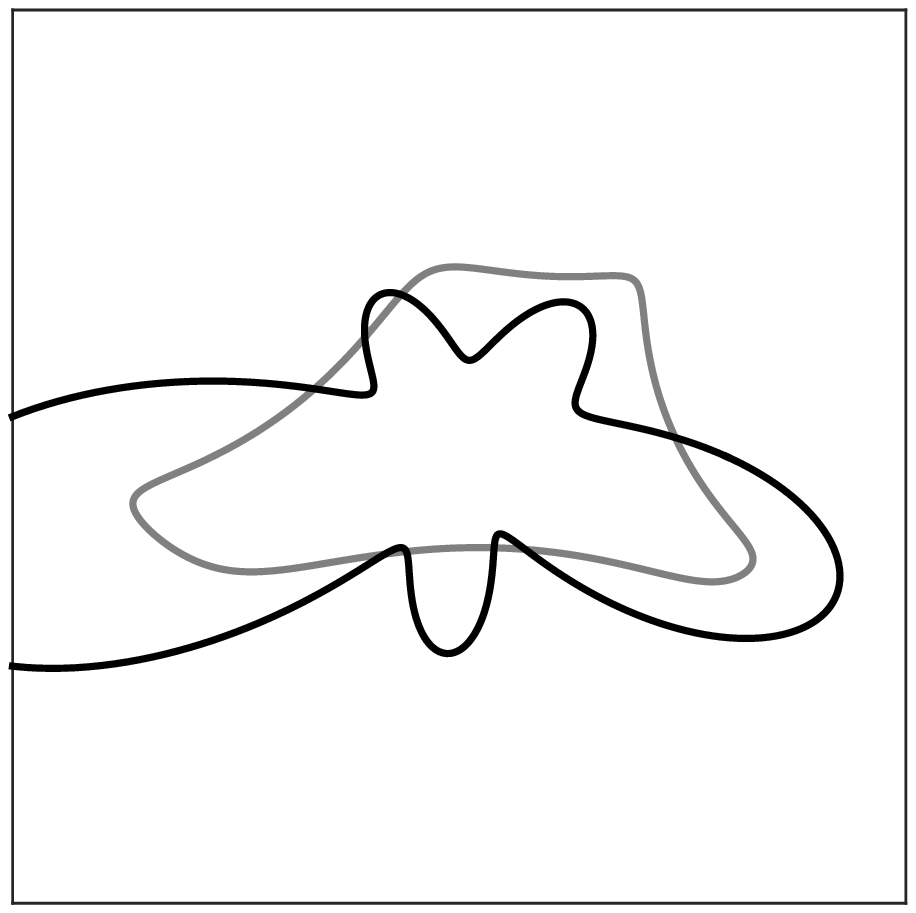}
\end{minipage}\,
\begin{minipage}{0.25\linewidth}
\includegraphics[width=.95\linewidth,trim=70 30 50 15, clip]{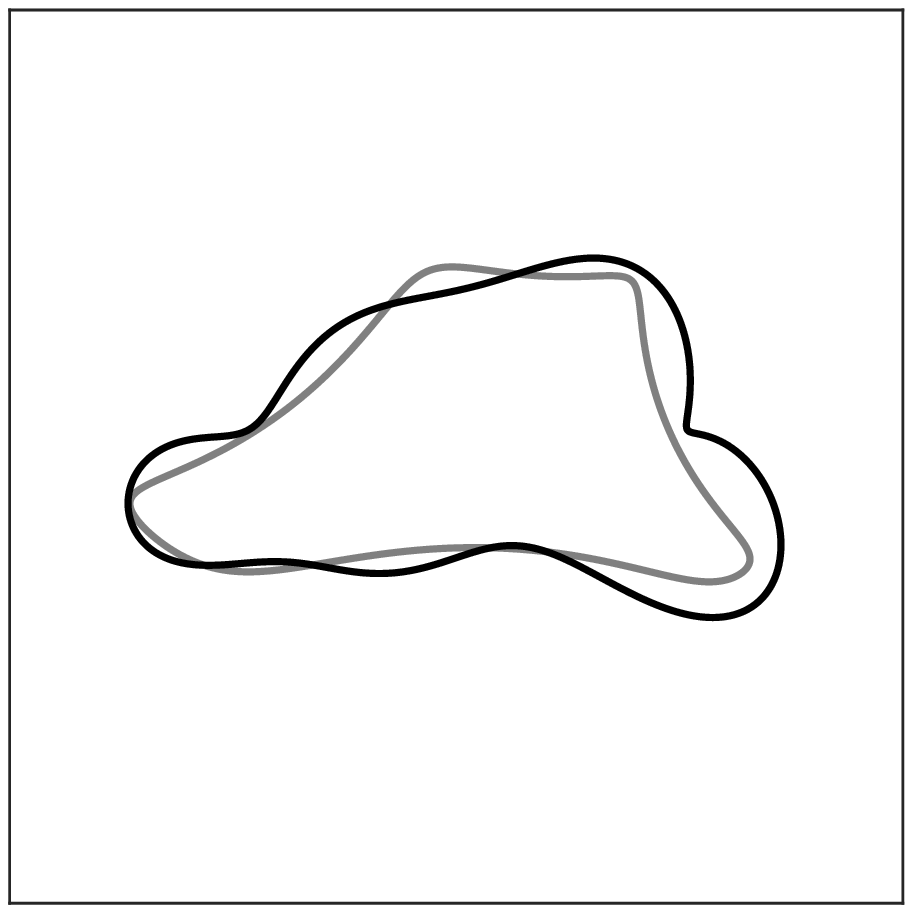}
\end{minipage}\,
\begin{minipage}{0.25\linewidth}
\includegraphics[width=.95\linewidth,trim=70 30 50 15, clip]{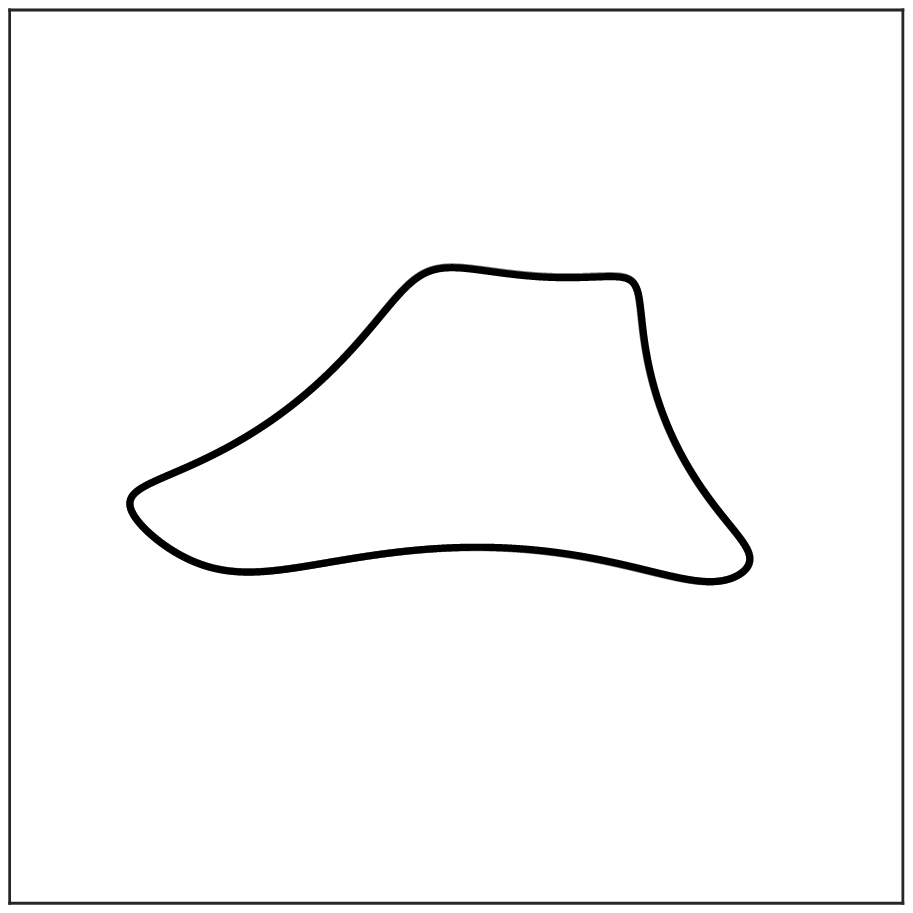}
\end{minipage}\,
\end{subfigure}
\caption{
Recovery of an asymmetric inclusion with various conductivities. The values of GPTs up to $\text{Ord}=6$ are used with SNR$=\infty$ (no noise).}
\label{fig:cap_initial}
\end{figure}
\begin{figure}[H]
\begin{subfigure}{\linewidth}
\raggedright\qquad
\captionsetup{justification=centering}
\begin{minipage}{0.11\linewidth}
\subcaption*{}
\end{minipage}\,
\begin{minipage}{0.25\linewidth}
\subcaption*{Perturbed disk\\ recovery}
\end{minipage}\,
\begin{minipage}{0.25\linewidth}
\subcaption*{Perturbed ellipse\\ recovery}
\end{minipage}\,
\begin{minipage}{0.25\linewidth}
\subcaption*{Conformal mapping\\ recovery}
\end{minipage}\,
\end{subfigure}\\
\hskip -1.5cm
\begin{subfigure}{\linewidth}
\raggedright\qquad
\begin{minipage}{0.11\linewidth}
\subcaption*{$\sigma=1/50$}
\subcaption*{SNR = 5}
\end{minipage}\,
\begin{minipage}{0.25\linewidth}
\includegraphics[width=.95\linewidth,trim=70 30 50 15, clip]{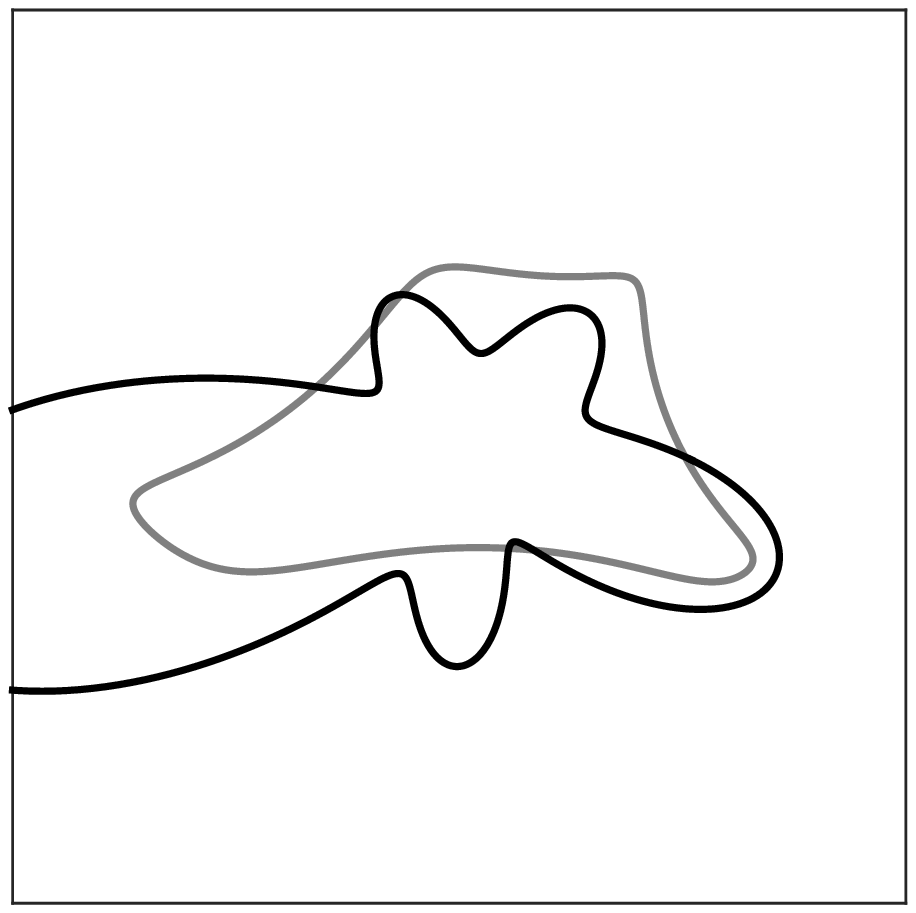}
\end{minipage}\,
\begin{minipage}{0.25\linewidth}
\includegraphics[width=.95\linewidth,trim=70 30 50 15, clip]{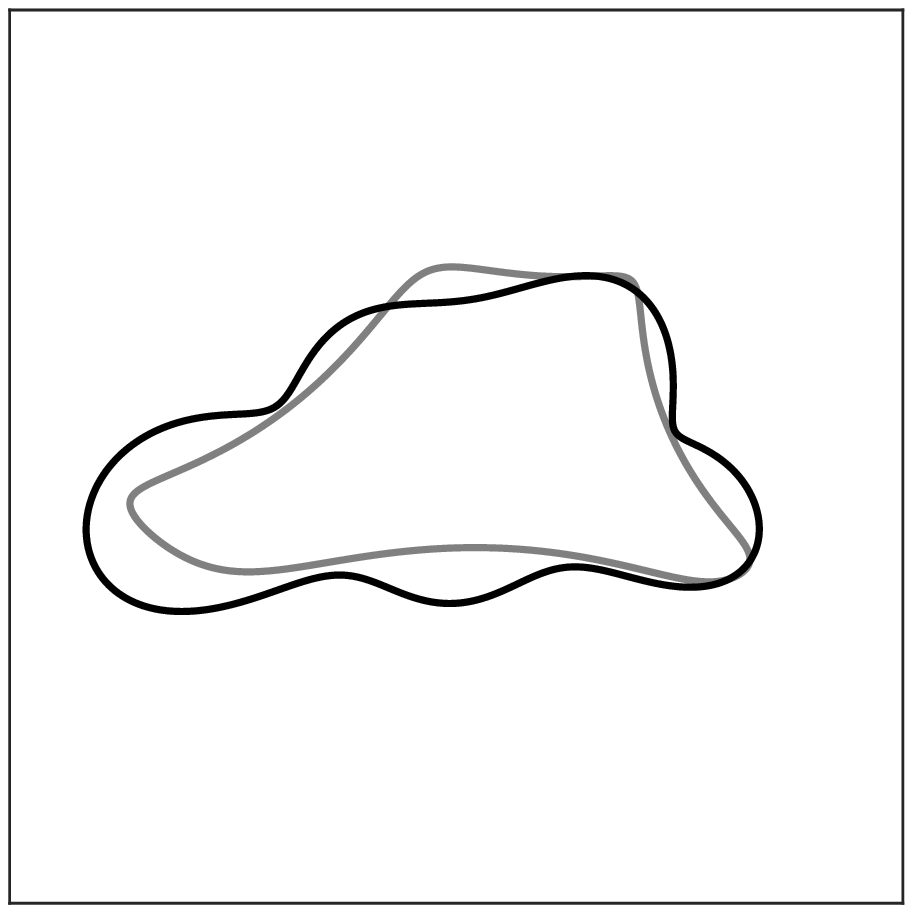}
\end{minipage}\,
\begin{minipage}{0.25\linewidth}
\includegraphics[width=.95\linewidth,trim=70 30 50 15, clip]{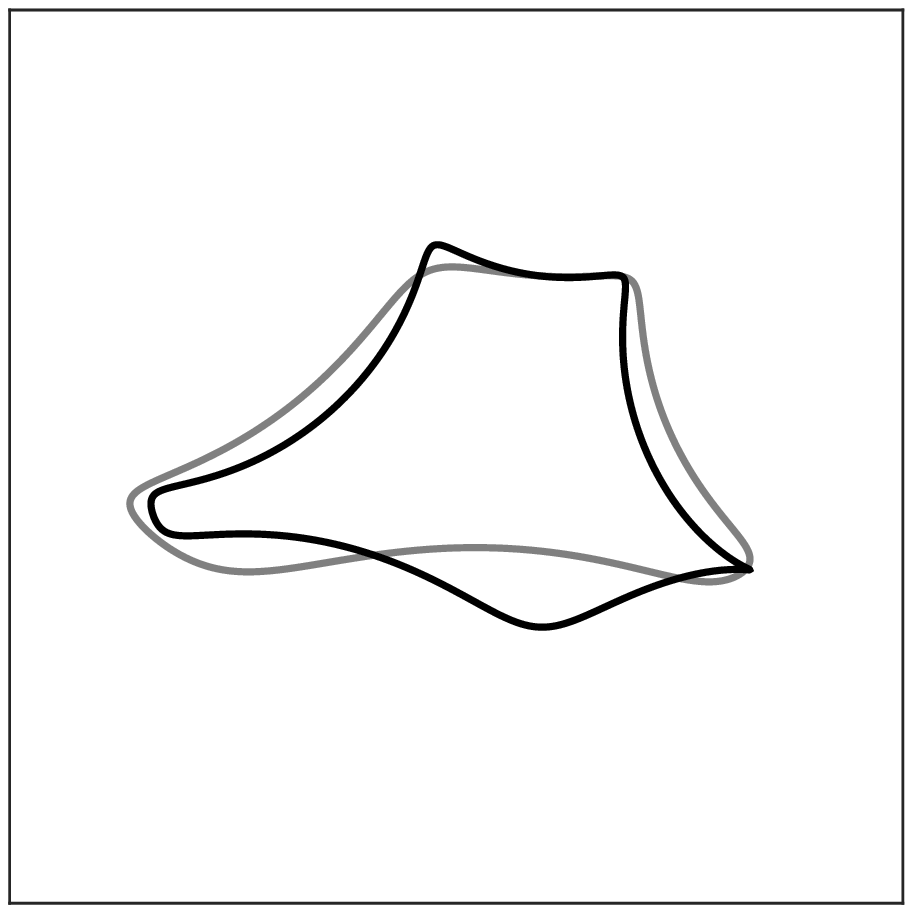}
\end{minipage}\,
\end{subfigure}\\
\hskip -1.5cm
\begin{subfigure}{\linewidth}
\raggedright\qquad
\begin{minipage}{0.11\linewidth}
\subcaption*{$\sigma=1/50$}
\subcaption*{SNR = 2}
\end{minipage}\,
\begin{minipage}{0.25\linewidth}
\includegraphics[width=.95\linewidth,trim=70 30 50 15, clip]{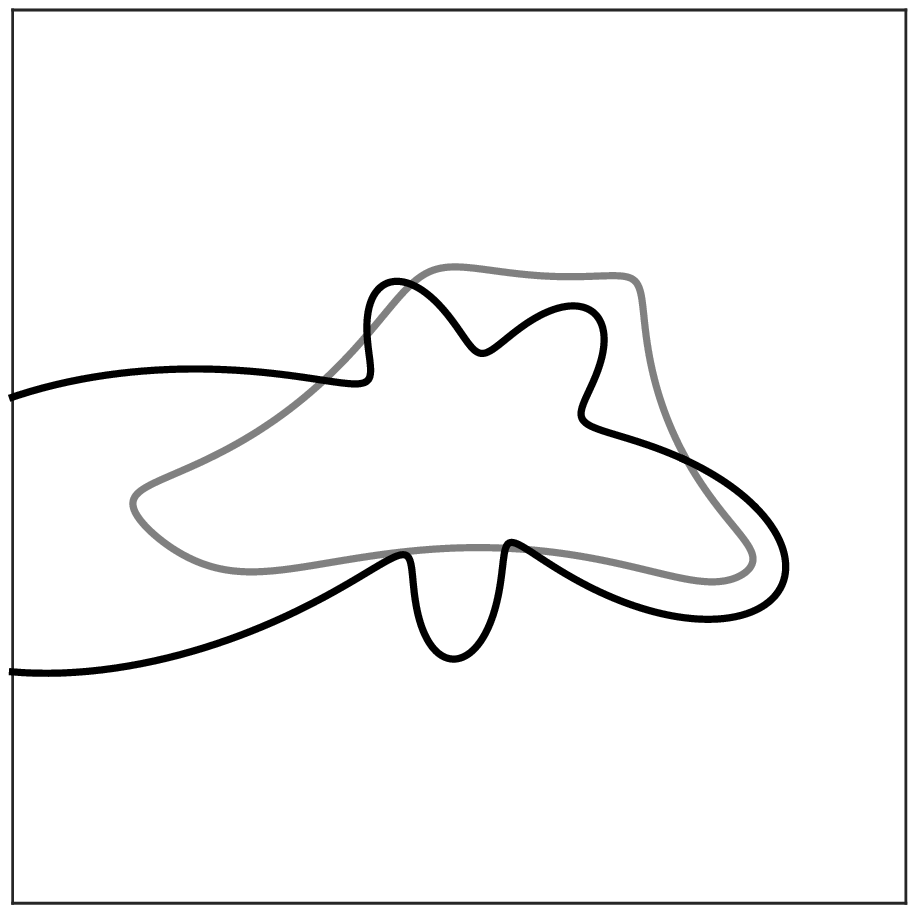}
\end{minipage}\,
\begin{minipage}{0.25\linewidth}
\includegraphics[width=.95\linewidth,trim=70 30 50 15, clip]{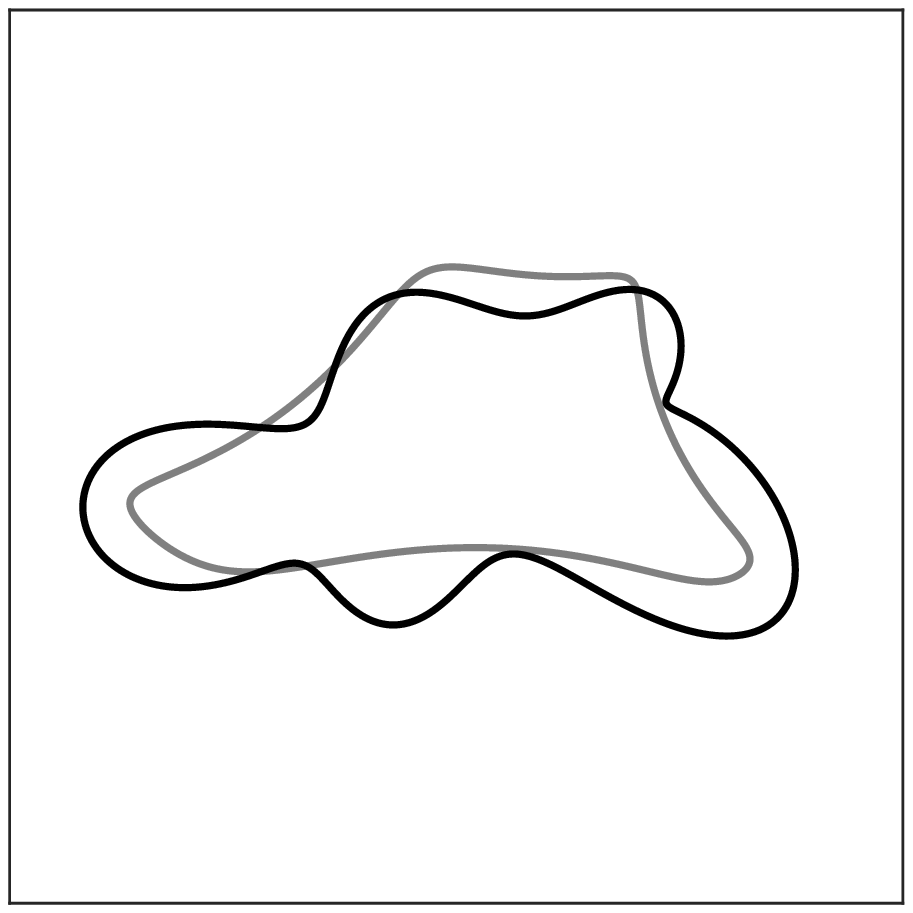}
\end{minipage}\,
\begin{minipage}{0.25\linewidth}
\includegraphics[width=.95\linewidth,trim=70 30 50 15, clip]{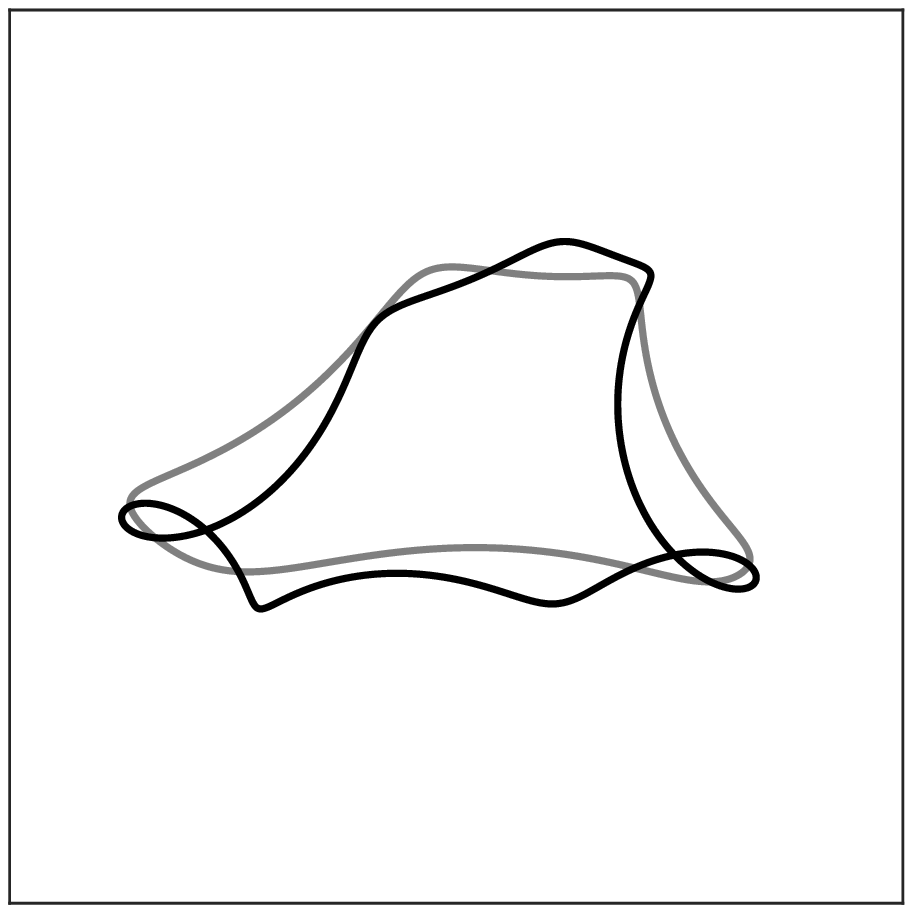}
\end{minipage}\,
\end{subfigure}
\caption{
The figures are numerically computed with noisy GPTs information with SNR=$5$, and $2$. The GPTs up to $\text{Ord}=6$ are used.}
\label{noise}
\end{figure}
\begin{figure}[h!]
\begin{subfigure}{\linewidth}
\raggedright\qquad
\captionsetup{justification=centering}
\begin{minipage}{0.11\linewidth}
\subcaption*{}
\end{minipage}\,
\begin{minipage}{0.25\linewidth}
\subcaption*{Perturbed disk\\ recovery}
\end{minipage}\,
\begin{minipage}{0.25\linewidth}
\subcaption*{Perturbed ellipse\\ recovery}
\end{minipage}\,
\begin{minipage}{0.25\linewidth}
\subcaption*{Conformal mapping\\ recovery}
\end{minipage}\,
\end{subfigure}\\
\hskip -1.5cm
\begin{subfigure}{\linewidth}
\raggedright\qquad
\begin{minipage}{0.11\linewidth}
\subcaption*{$\sigma=1/5$}
\end{minipage}\,
\begin{minipage}{0.25\linewidth}
\includegraphics[width=.95\linewidth,trim=70 30 50 15, clip]{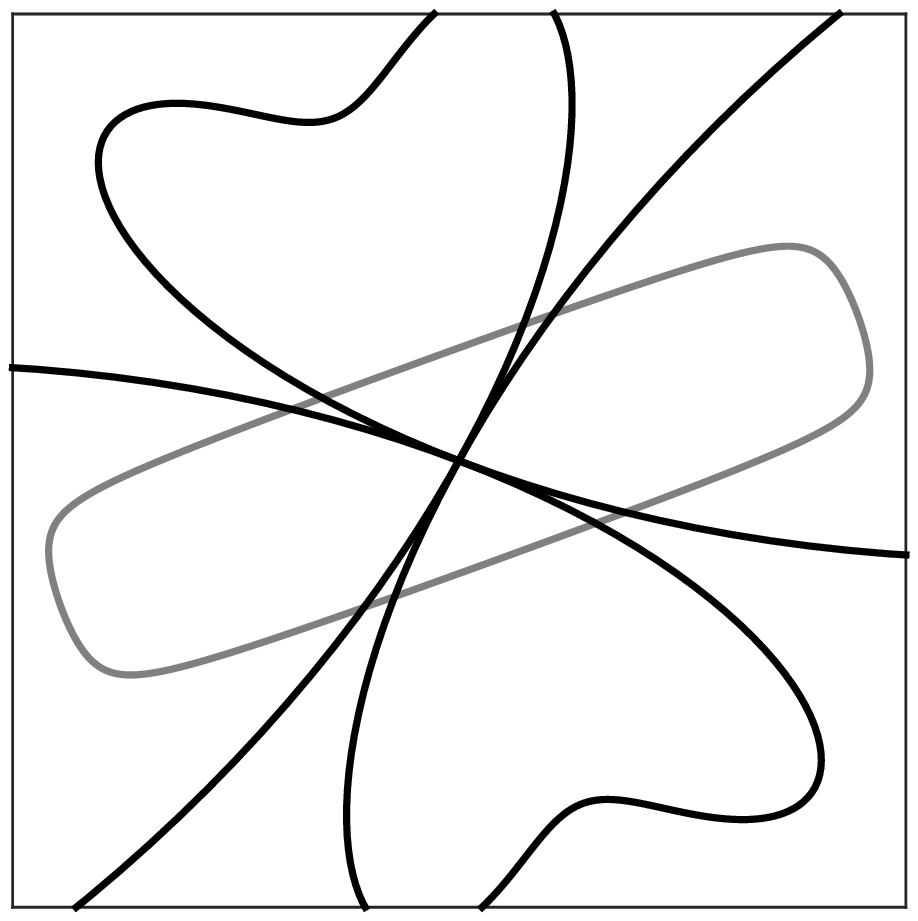}
\end{minipage}\,
\begin{minipage}{0.25\linewidth}
\includegraphics[width=.95\linewidth,trim=70 30 50 15, clip]{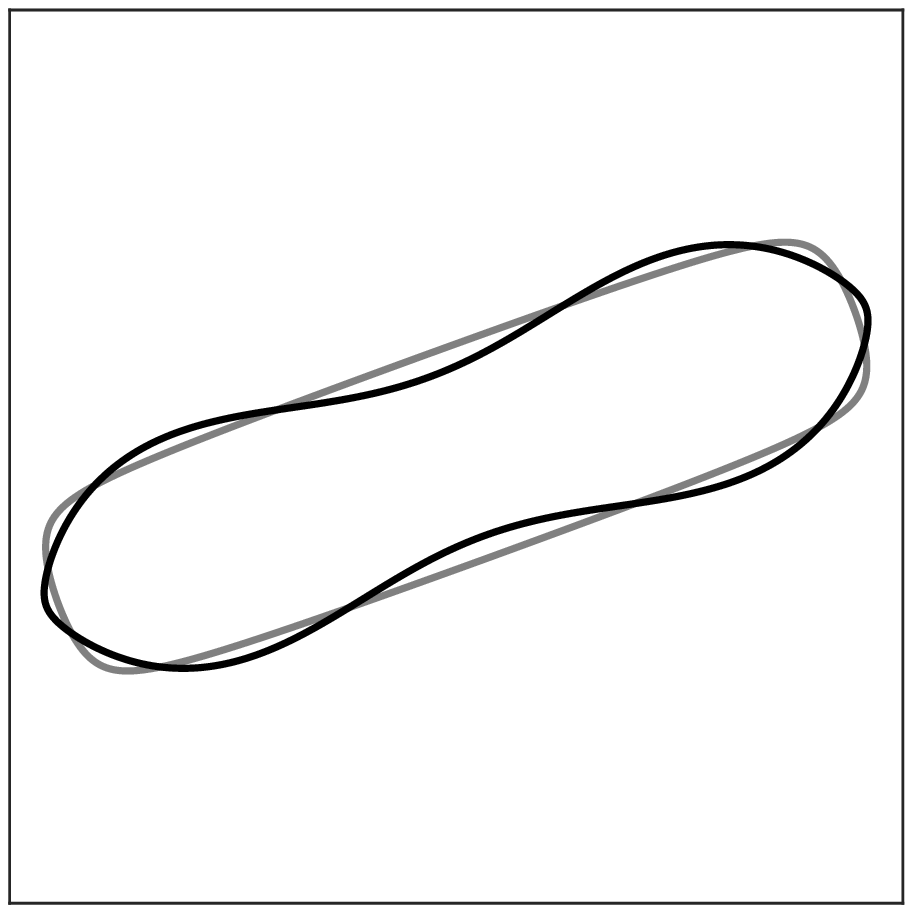}
\end{minipage}\,
\begin{minipage}{0.25\linewidth}
\includegraphics[width=.95\linewidth,trim=70 30 50 15, clip]{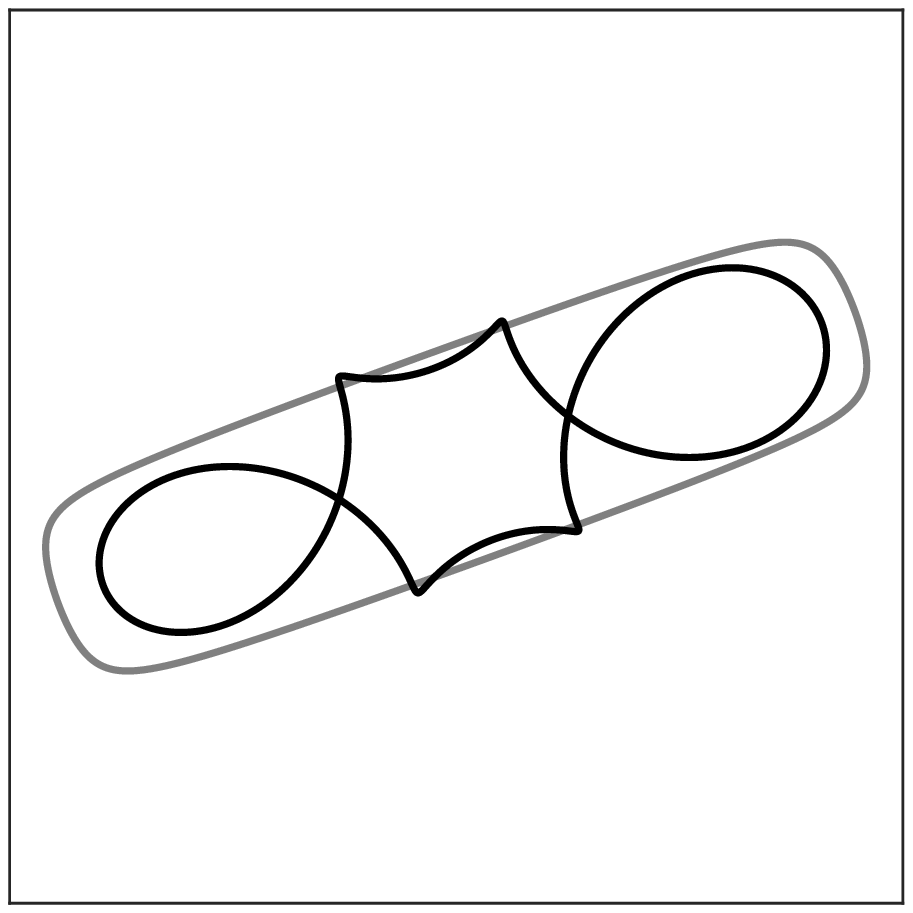}
\end{minipage}\,
\end{subfigure}\\
\hskip -1.5cm
\begin{subfigure}{\linewidth}
\raggedright\qquad
\begin{minipage}{0.11\linewidth}
\subcaption*{$\sigma=1/50$}
\end{minipage}\,
\begin{minipage}{0.25\linewidth}
\includegraphics[width=.95\linewidth,trim=70 30 50 15, clip]{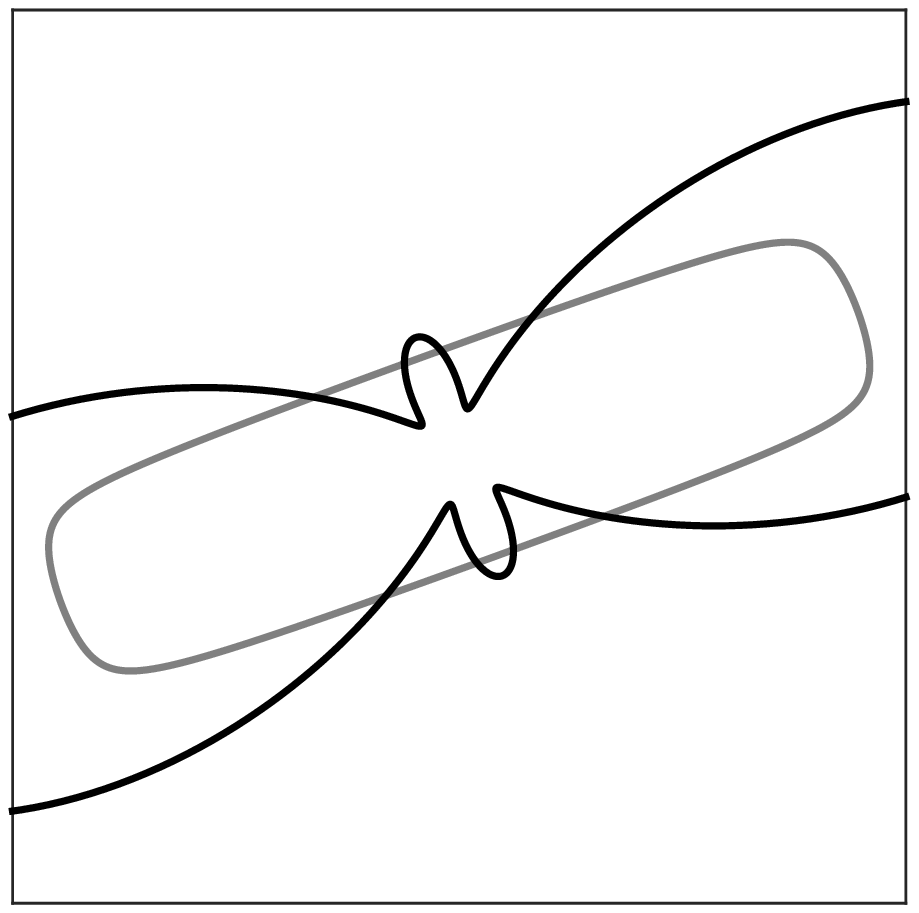}
\end{minipage}\,
\begin{minipage}{0.25\linewidth}
\includegraphics[width=.95\linewidth,trim=70 30 50 15, clip]{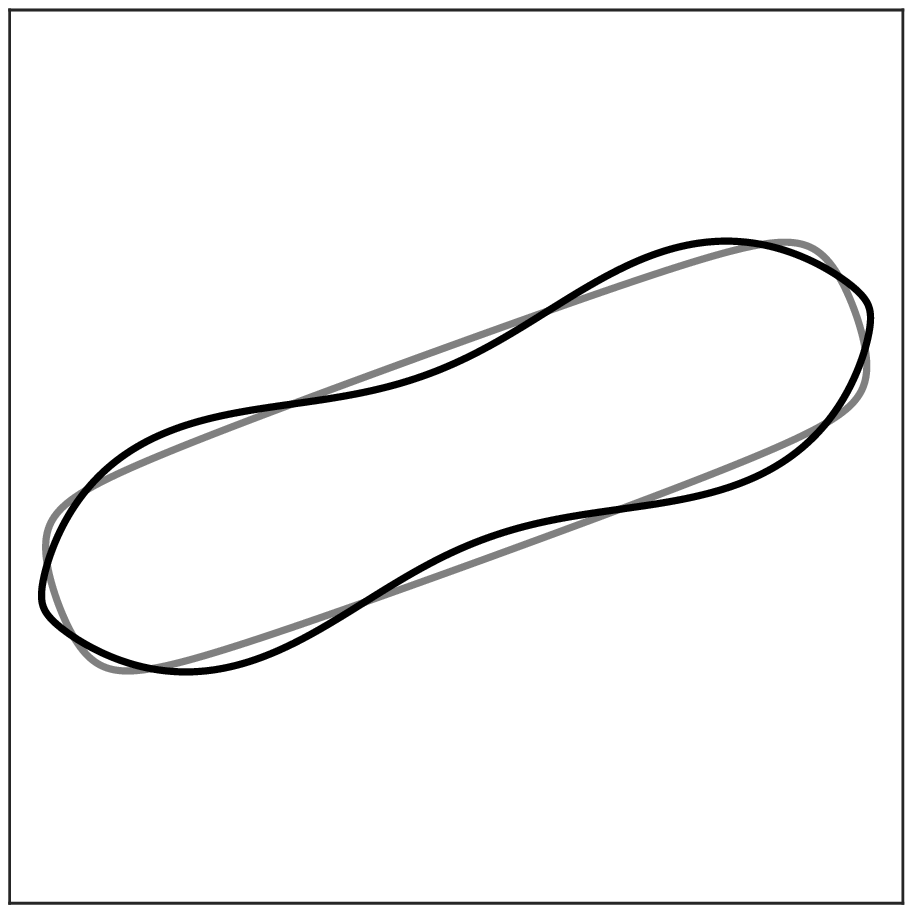}
\end{minipage}\,
\begin{minipage}{0.25\linewidth}
\includegraphics[width=.95\linewidth,trim=70 30 50 15, clip]{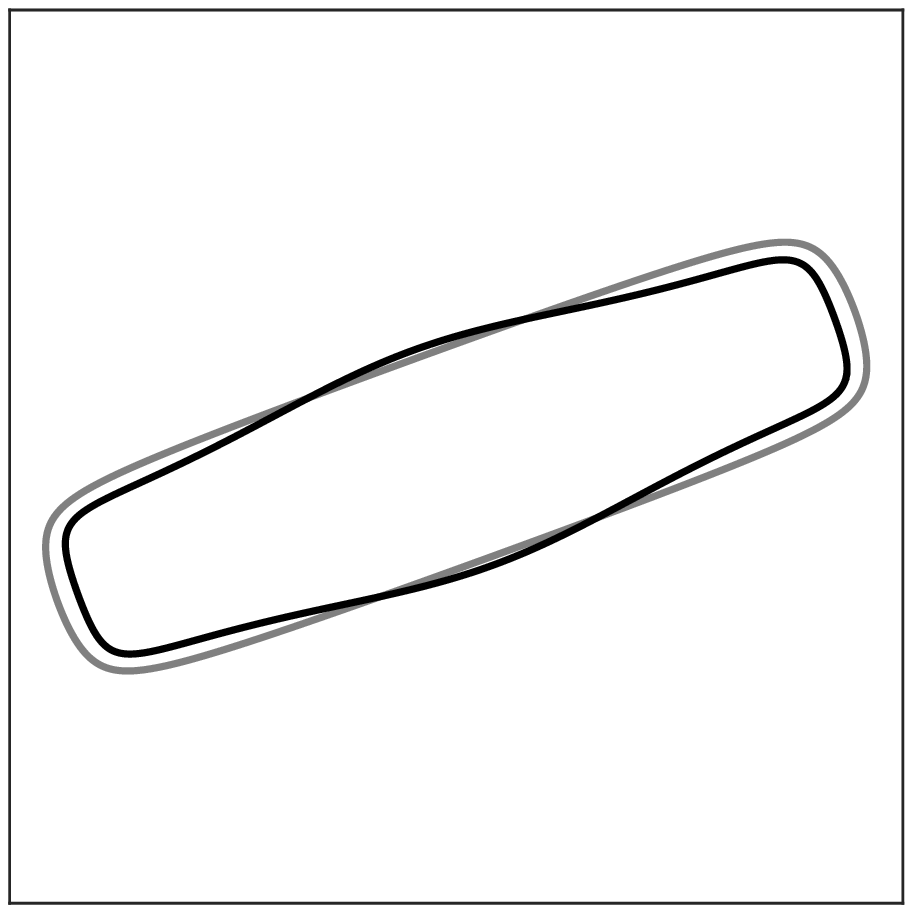}
\end{minipage}\,
\end{subfigure}\\
\hskip -1.5cm
\begin{subfigure}{\linewidth}
\raggedright\qquad
\begin{minipage}{0.11\linewidth}
\subcaption*{$\sigma=0$}
\end{minipage}\,
\begin{minipage}{0.25\linewidth}
\includegraphics[width=.95\linewidth,trim=70 30 50 15, clip]{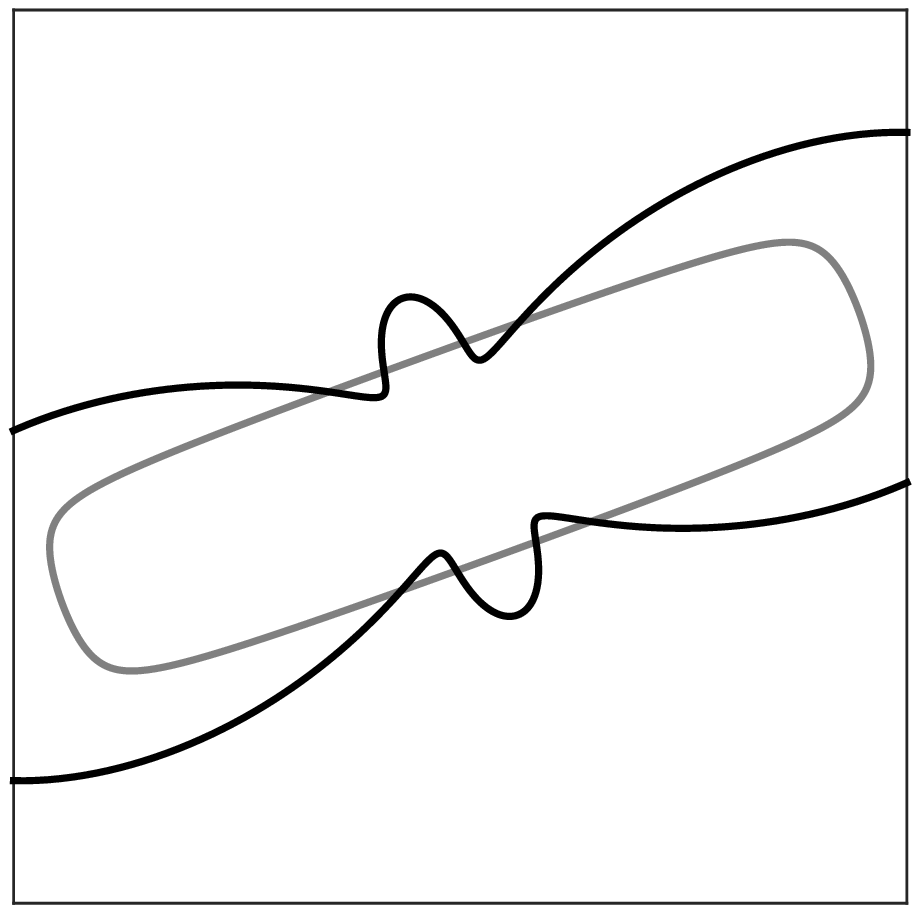}
\end{minipage}\,
\begin{minipage}{0.25\linewidth}
\includegraphics[width=.95\linewidth,trim=70 30 50 15, clip]{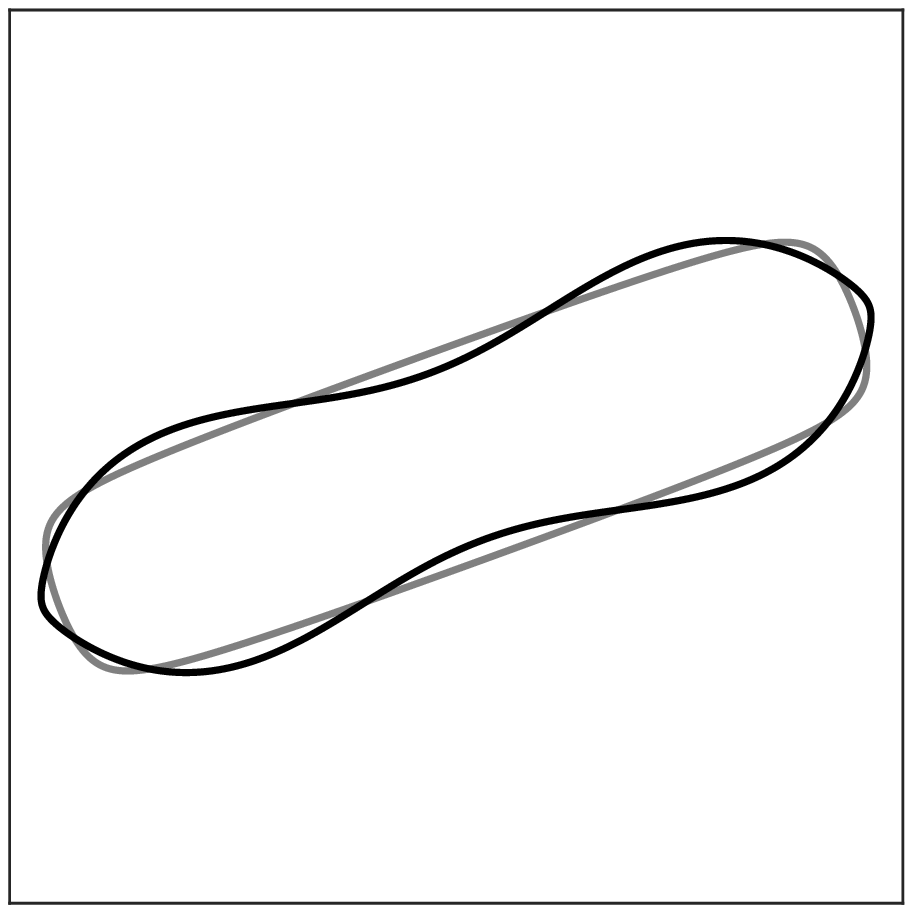}
\end{minipage}\,
\begin{minipage}{0.25\linewidth}
\includegraphics[width=.95\linewidth,trim=70 30 50 15, clip]{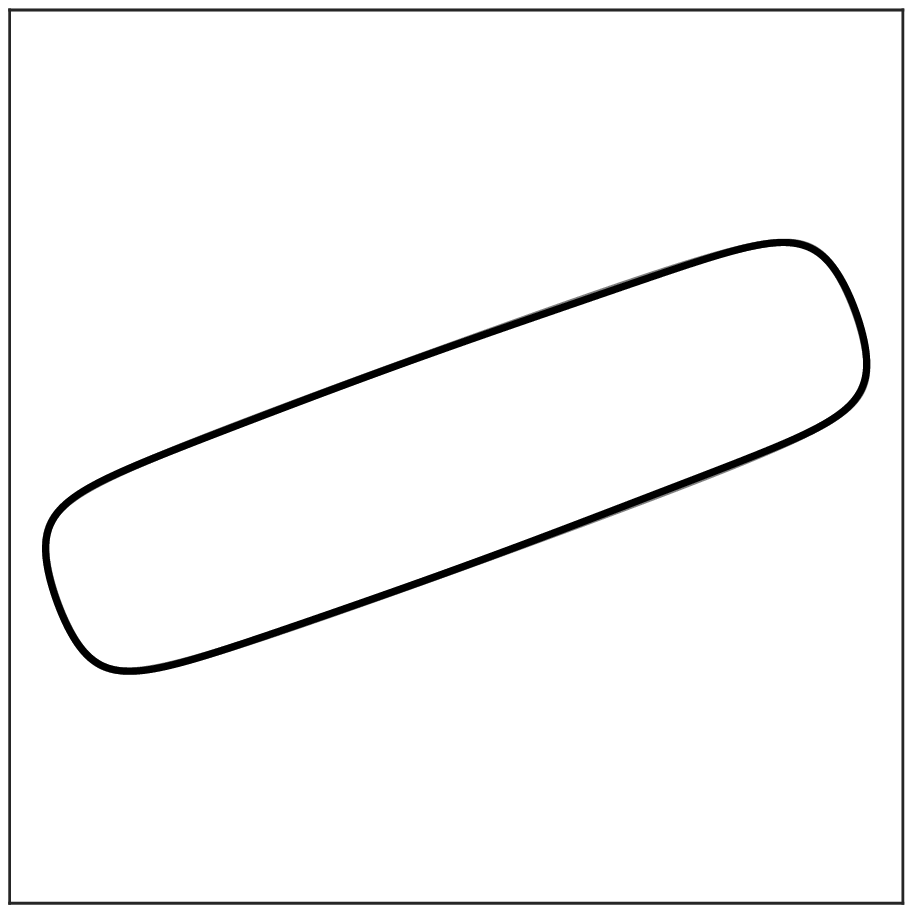}
\end{minipage}\,
\end{subfigure}
\caption{Recovery of a straight-shaped inclusion with various conductivities. The values of GPTs up to $\text{Ord}=6$ are used; see \eqnref{ordGPTs}. 
The perturbed ellipse recovery works well for all conductivity values. The conformal mapping recovery shows high accuracy for the extreme or near-extreme conductivity case.}
\label{fig:straight_initial}
\end{figure}
\begin{figure}[h!]
\begin{subfigure}{\linewidth}
\raggedright\qquad
\captionsetup{justification=centering}
\begin{minipage}{0.11\linewidth}
\subcaption*{}
\end{minipage}\,
\begin{minipage}{0.25\linewidth}
\subcaption*{Perturbed disk\\ recovery}
\end{minipage}\,
\begin{minipage}{0.25\linewidth}
\subcaption*{Perturbed ellipse\\ recovery}
\end{minipage}\,
\begin{minipage}{0.25\linewidth}
\subcaption*{Conformal mapping\\ recovery}
\end{minipage}\,
\end{subfigure}\\
\hskip -1.5cm
\begin{subfigure}{\linewidth}
\raggedright\qquad
\begin{minipage}{0.11\linewidth}
\subcaption*{$\sigma=5$}
\end{minipage}\,
\begin{minipage}{0.25\linewidth}
\includegraphics[width=.95\linewidth,trim=70 30 50 15, clip]{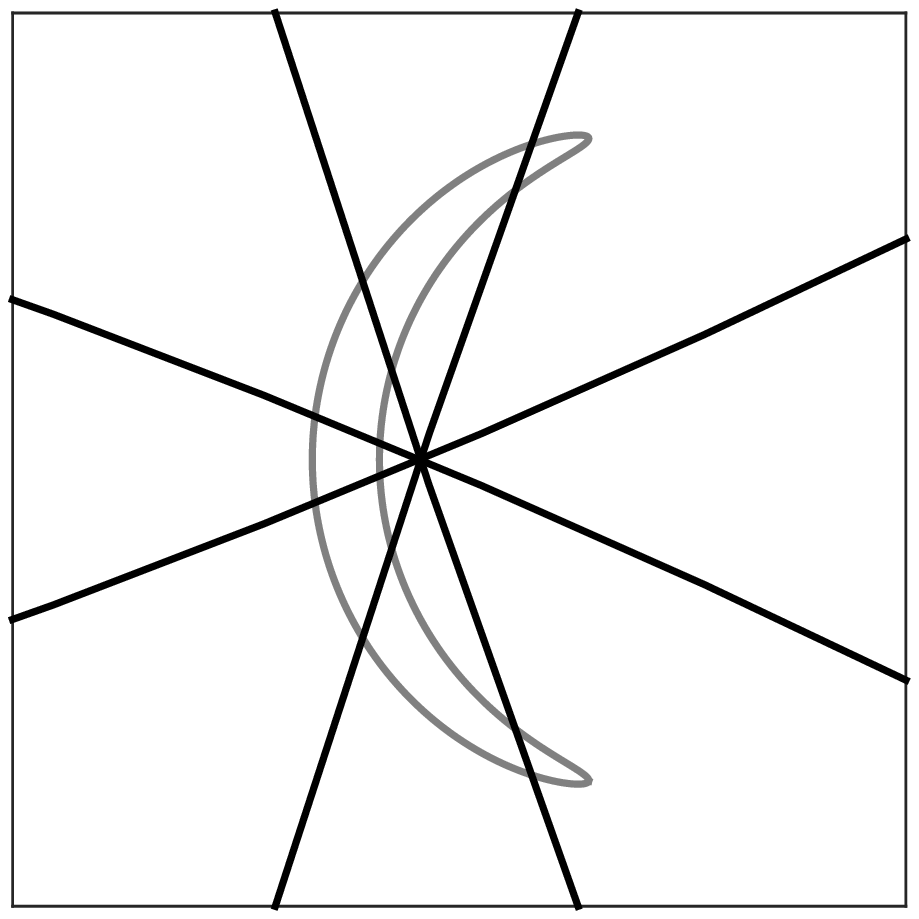}
\end{minipage}\,
\begin{minipage}{0.25\linewidth}
\includegraphics[width=.95\linewidth,trim=70 30 50 15, clip]{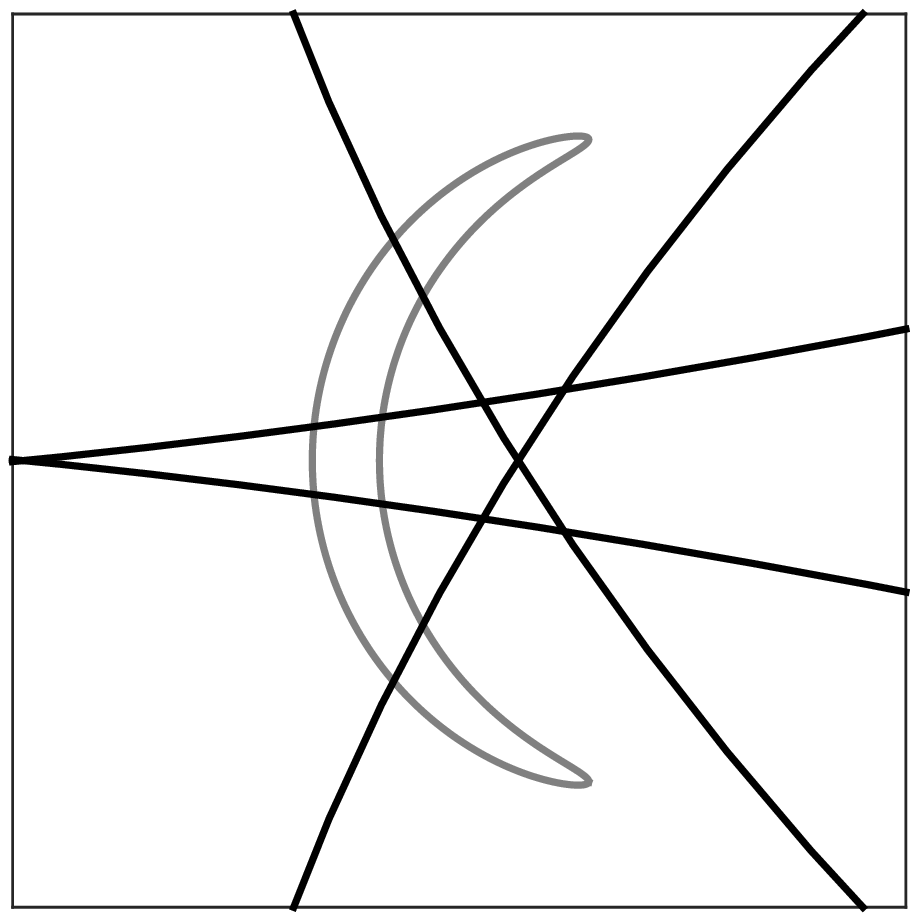}
\end{minipage}\,
\begin{minipage}{0.25\linewidth}
\includegraphics[width=.95\linewidth,trim=70 30 50 15, clip]{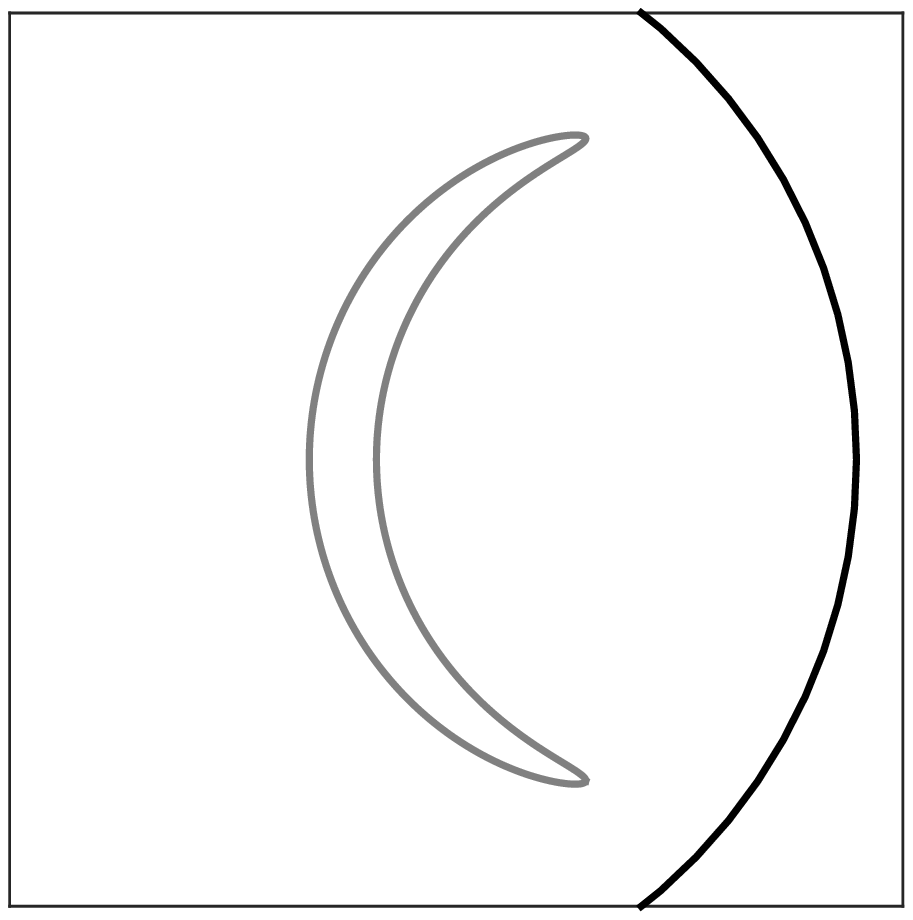}
\end{minipage}\,
\end{subfigure}\\
\hskip -1.5cm
\begin{subfigure}{\linewidth}
\raggedright\qquad
\begin{minipage}{0.11\linewidth}
\subcaption*{$\sigma=50$}
\end{minipage}\,
\begin{minipage}{0.25\linewidth}
\includegraphics[width=.95\linewidth,trim=70 30 50 15, clip]{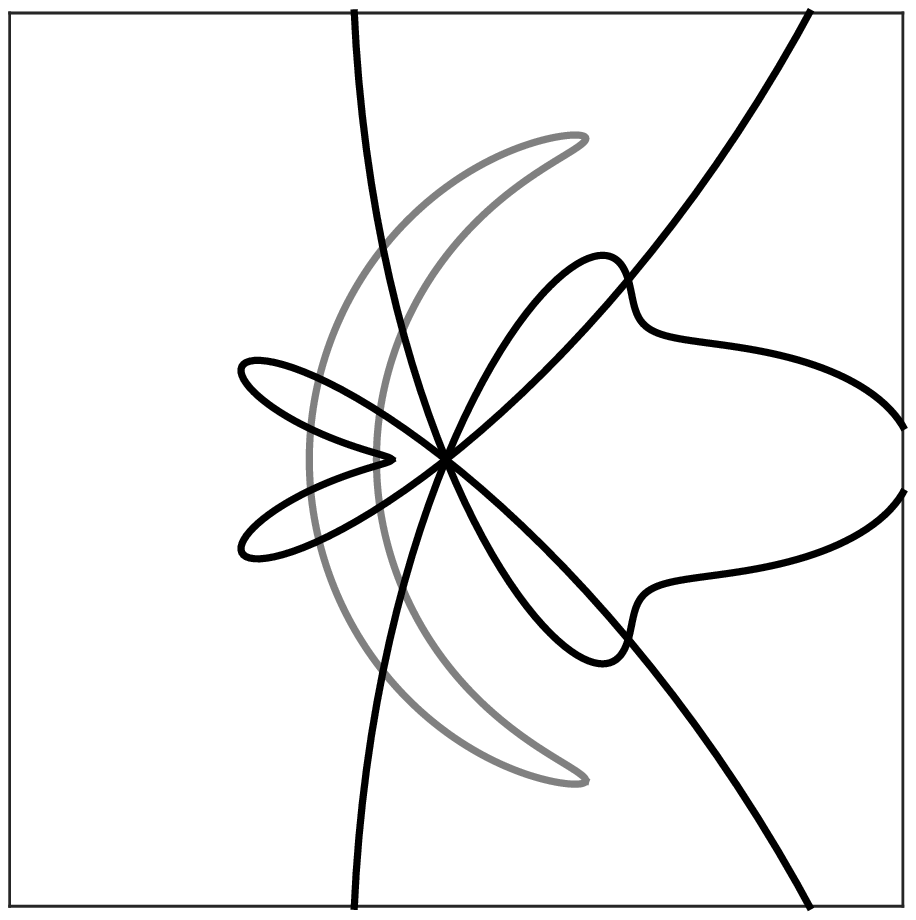}
\end{minipage}\,
\begin{minipage}{0.25\linewidth}
\includegraphics[width=.95\linewidth,trim=70 30 50 15, clip]{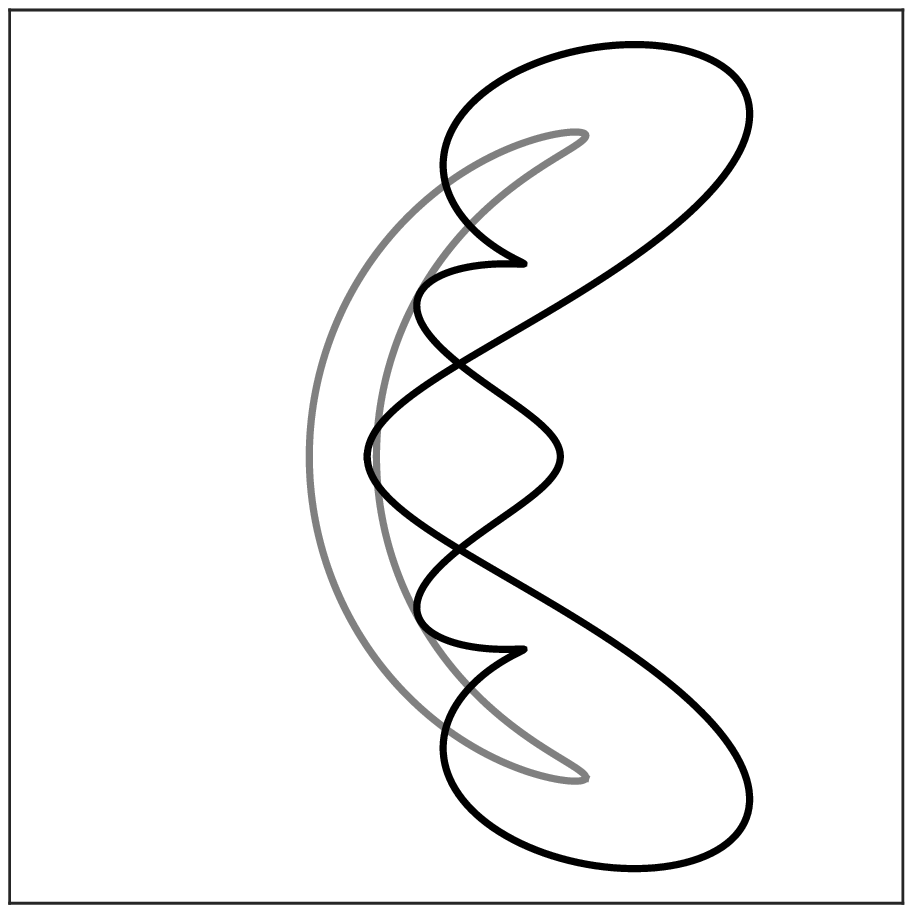}
\end{minipage}\,
\begin{minipage}{0.25\linewidth}
\includegraphics[width=.95\linewidth,trim=70 30 50 15, clip]{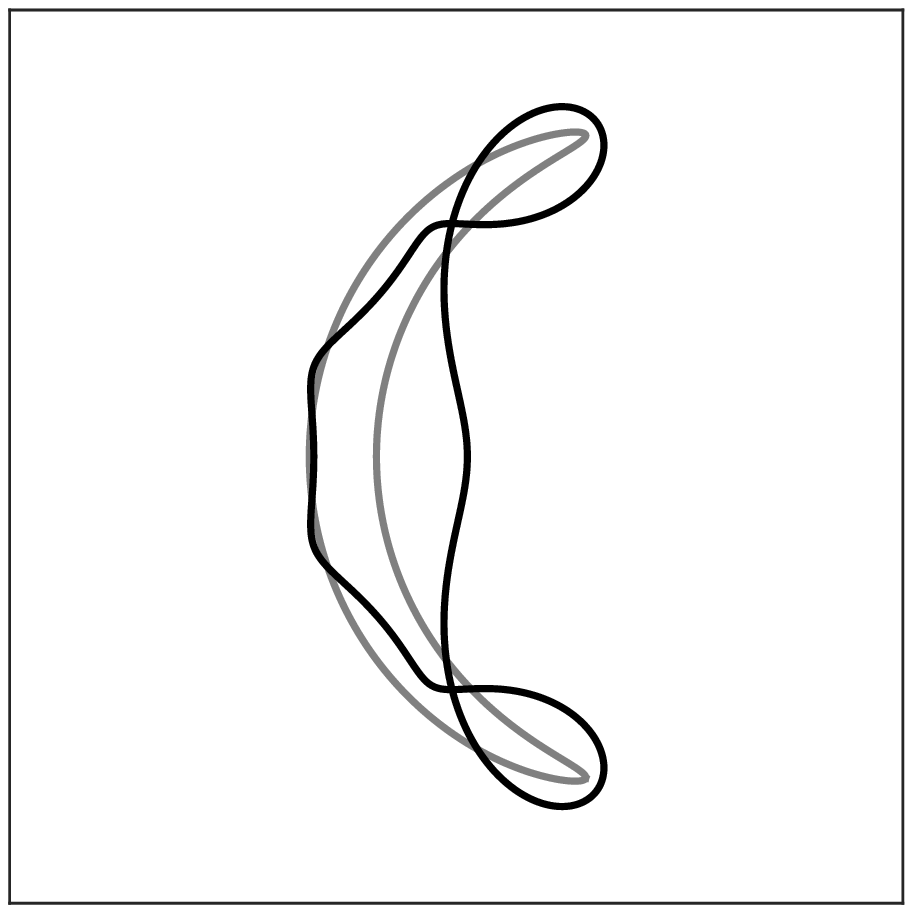}
\end{minipage}\,
\end{subfigure}\\
\hskip -1.5cm
\begin{subfigure}[b]{\linewidth}
\raggedright\qquad
\begin{minipage}{0.11\linewidth}
\subcaption*{$\sigma=\infty$}
\end{minipage}\,
\begin{minipage}{0.25\linewidth}
\includegraphics[width=.95\linewidth,trim=70 30 50 15, clip]{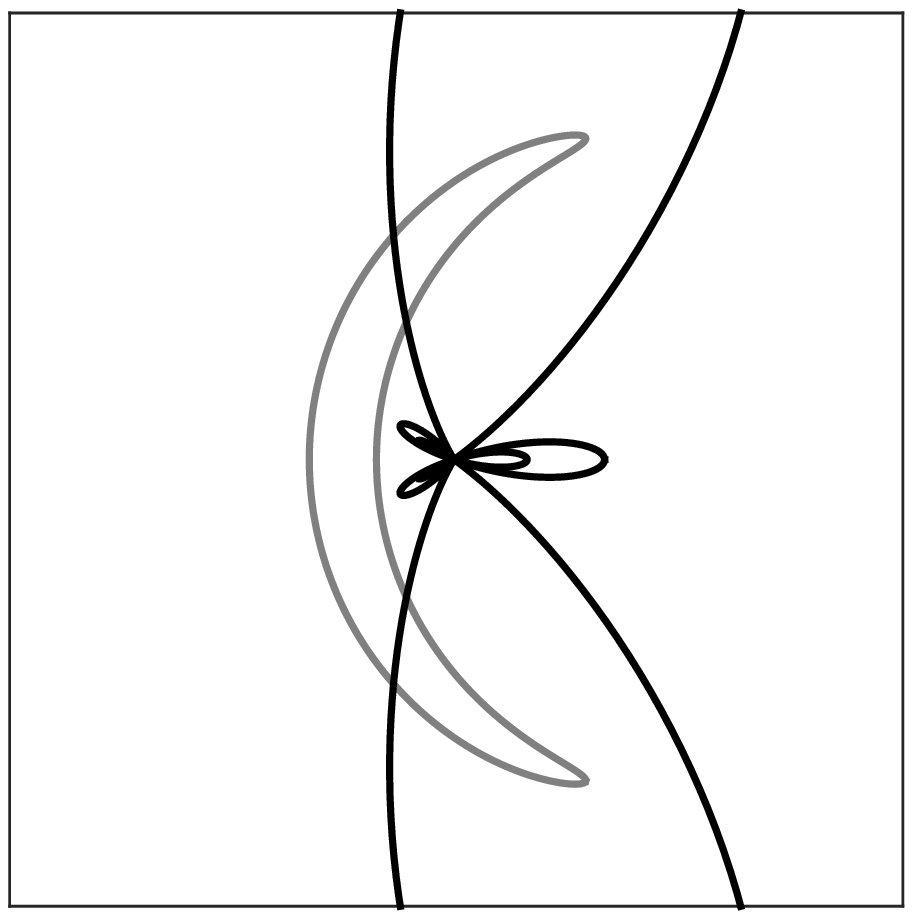}
\end{minipage}\,
\begin{minipage}{0.25\linewidth}
\includegraphics[width=.95\linewidth,trim=70 30 50 15, clip]{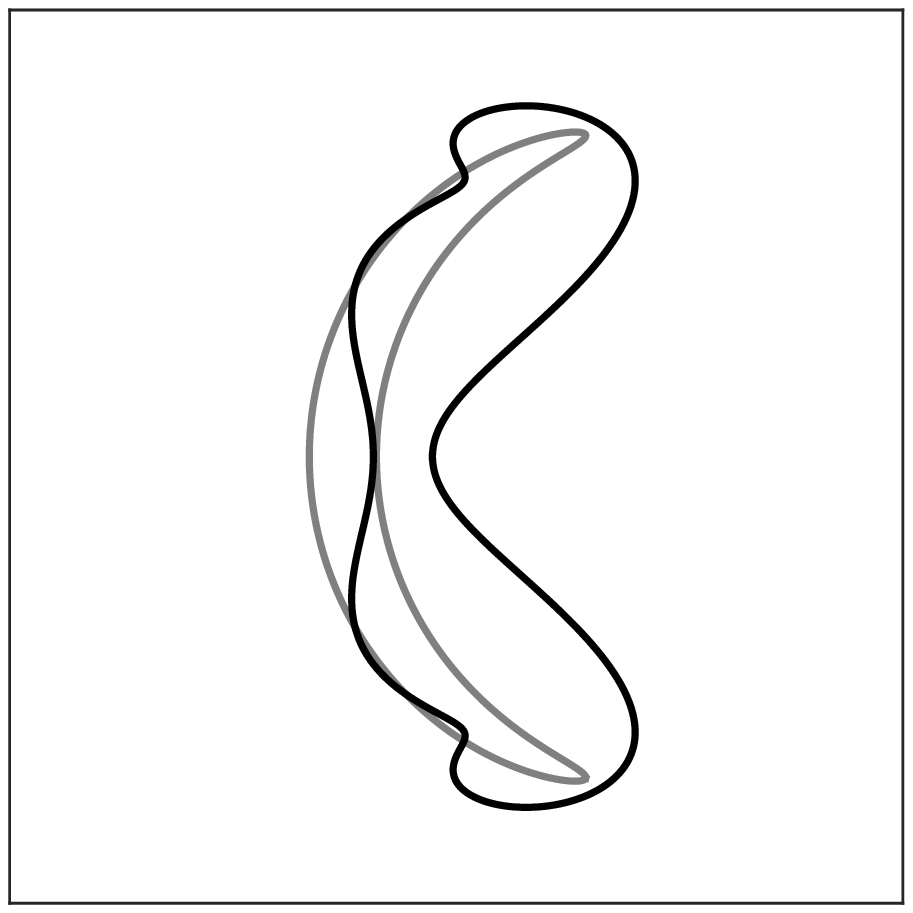}
\end{minipage}\,
\begin{minipage}{0.25\linewidth}
\includegraphics[width=.95\linewidth,trim=70 30 50 15, clip]{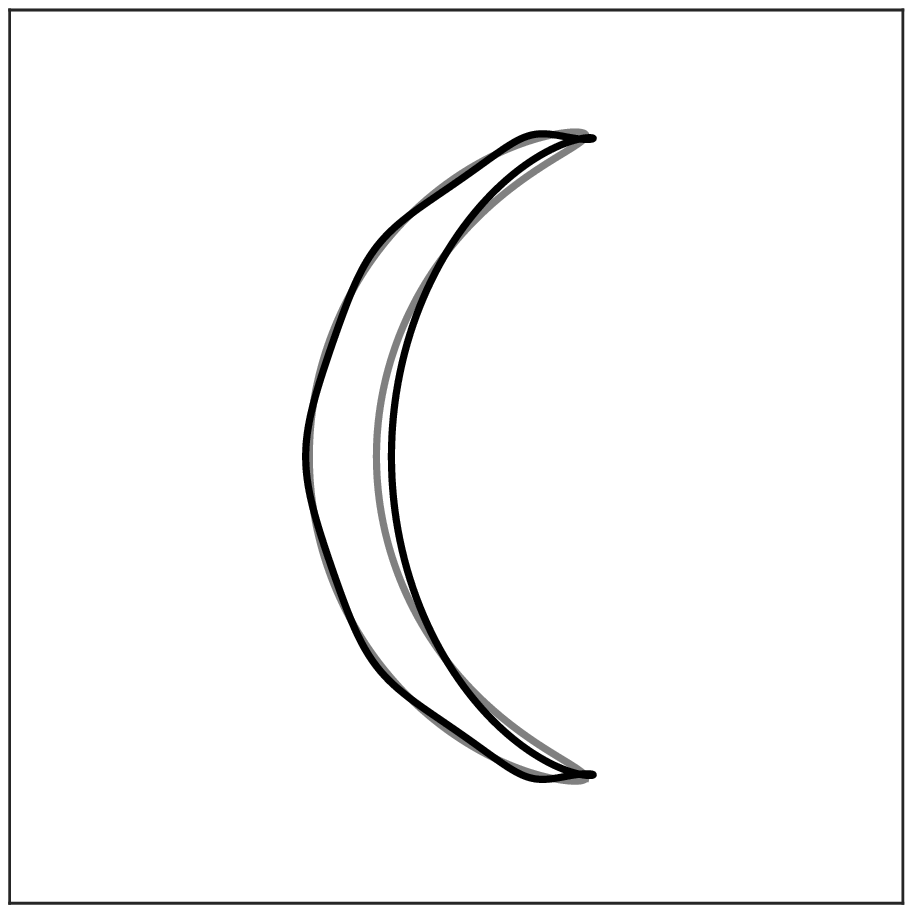}
\end{minipage}\,
\end{subfigure}
\caption{Recovery of a concave inclusion from the GPTs up to order $\text{Ord}=6$.
}
\label{fig:moon_initial}
\end{figure}

%\bibliographystyle{plain}
%\bibliography{2020_Choi_Kim_Lim_ASR}{}

\ifx \bblindex \undefined \def \bblindex #1{} \fi\ifx \bbljournal \undefined
  \def \bbljournal #1{{\em #1}\index{#1@{\em #1}}} \fi\ifx \bblnumber
  \undefined \def \bblnumber #1{{\bf #1}} \fi\ifx \bblvolume \undefined \def
  \bblvolume #1{{\bf #1}} \fi\ifx \noopsort \undefined \def \noopsort #1{} \fi

%%%%%%%%%%%%%%%%%%%%%%%%%%%%%%%%%%%%%%%%%%%%%%%%%%%%%%%%%%%%%%%%%%%%%%%%

%    Templates for common elements of a journal article; for additional
%    information, see the AMS-LaTeX instructions manual, instr-l.pdf,
%    included in the MCOM author package, and the amsthm user's guide,
%    linked from http://www.ams.org/tex/amslatex.html .

\end{document}